\newtheorem{theorem}{Theorem}
\newtheorem{lemma}[theorem]{Lemma} 
\newtheorem{corollary}[theorem]{Corollary}
\theoremstyle{definition} 
\newtheorem{example}[theorem]{Example}
\theoremstyle{remark}
\newcommand{\bfRic}{\ensuremath{\mathbf{Ric}} }
\newcommand{\half}{\textstyle{\frac{1}{2}}  }
\begin{document}


\title{The Existence of Soliton Metrics for Nilpotent Lie Groups} 

\author{Tracy L. Payne} 

\address{ Department of Mathematics, Idaho State University, 
Pocatello, ID 83209-8085} 

\email{payntrac@isu.edu}

\keywords{homogeneous space -- nilmanifold -- nilsoliton metric --
soliton metric -- weighted Dynkin diagram } 

\subjclass[2000]{Primary: 53C25;  Secondary:  53C30, 22E25, 22F30.}
 
\begin{abstract} We show that a left-invariant metric $g$ on a
nilpotent Lie group $N$ is a soliton metric if and only if a matrix
$U$ and vector $v$ associated the manifold $(N,g)$ satisfy the matrix
equation $U v = \boldone,$ where $\boldone$ is a vector with every
entry a one.  We associate a generalized Cartan matrix to the matrix
$U$ and use the theory of Kac-Moody algebras to analyze the solution
spaces for such linear systems.  We use these methods to find
infinitely many  new examples of nilmanifolds with soliton metrics.
We give a sufficient condition for a sum of soliton metric nilpotent
Lie algebra structures to be soliton, and we use this criterion to
show that a soliton metric exists on every naturally graded filiform
metric Lie algebra. 
\end{abstract}

\maketitle


\setcounter{tocdepth}{2} 

\section{Introduction}
\label{intro}
\subsection{Overview} A {\em nilsoliton metric} is a left-invariant
metric on a nilpotent Lie group such that the Ricci endomorphism,
viewed as a linear mapping on the Lie algebra, differs from a scalar
multiple of the identity map by a derivation.  This condition may be
viewed as the Einstein condition at the second level of Lie algebra
cohomology (relative to the adjoint representation).

Nilsoliton metrics are of interest for several reasons.  First,
nonabelian nilpotent Lie groups do not admit left-invariant Einstein
metrics, and nilsoliton metrics have nice properties that make them
preferred metrics on nilpotent Lie groups in the absence of Einstein
metrics.  J.\ Lauret has shown that given a nilpotent Lie group, a
nilsoliton metric is unique up to scaling, and that nilsoliton metrics
arise as critical points of a natural Riemannian functional on the
space of normalized nilpotent Lie algebra brackets (\cite{lauret01a}).
Special metrics on nilpotent Lie groups, such as metrics from
horospheres of symmetric spaces of noncompact type, and more
generally, groups of type $H$, are nilsoliton metrics.  However,
nilsoliton metrics fail to have the desirable property of existence:
there exist many nilpotent Lie algebras that do not admit nontrivial
semisimple derivations and hence the corresponding nilpotent groups do
not admit nilsoliton metrics.

Second, soliton metrics arise in the study of Ricci flow, which was
introduced by Hamilton in \cite{hamilton82} as part of his program to
prove Thurston's Geometrization Conjecture for compact
three-manifolds.  Perelman has recently made great progress
(\cite{perelman1}, \cite{perelman2}, \cite{perelman3}) toward the
completion of that program.  Einstein metrics are the fixed points for
the normalized Ricci flow on the space of Riemannian metrics on a
compact manifold.  A solution to the Ricci flow which moves by
diffeomorphisms and also dilates by a time-dependent factor is called
a {\em Ricci soliton.}  Ricci soliton metrics may be viewed as
generalized fixed points for the Ricci flow.  A left-invariant metric
on a nilpotent Lie group is a Ricci soliton metric if and only if it
is a nilsoliton metric (Prop. 1.1, \cite{lauret01a}).
\nocite{chowknopf04}
 
Third, nilsoliton metrics are inherent in the study of Einstein
metrics on noncompact homogeneous spaces.  All known noncompact,
nonflat, homogeneous Einstein manifolds are isometric to solvable Lie
groups endowed with left-invariant metrics (solvmanifolds).  D.\ V.\
Alekse'evskii conjectured that an isotropy subgroup of the isometry
group of a noncompact, nonflat, homogeneous Einstein manifold is a
maximal compact subgroup. If Alekse'evskii's conjecture is true, then
any noncompact, nonflat, homogeneous Einstein manifold with linear
isometry group would be isometric to a solvmanifold.

It has been shown that left-invariant Einstein metrics on unimodular
solvable groups are flat (\cite{dottimiatello82}). 
The remaining case of interest is when the solvable group is
nonunimodular.  In \cite{heberinv}, J.\ Heber gave a comprehensive
analysis of structure of nonunimodular Einstein solvmanifolds and
their moduli spaces.  We say that a metric Lie algebra is Einstein (or
nilsoliton) if the corresponding simply connected homogeneous space is
Einstein (or nilsoliton).  A metric solvable Lie algebra $\fraks$ is
called {\em standard} if $\fraka := [\fraks, \fraks]^\perp$ is
abelian. All Einstein solvable metric Lie algebras are of standard
type (\cite{lauret07}).   Heber showed that any nonunimodular solvable
Lie algebra $\fraks$ endowed with a standard Einstein metric $Q$ has a
subalgebra $\fraks_0 := \la H \ra \oplus \frakn$ so that $(\fraks_0,
Q|_{\fraks_0})$ is Einstein. Thus, the analysis of standard
nonunimodular Einstein solvmanifolds is reduced to the case where
$\fraka$ has dimension one (the {\em rank one} case).

In \cite{lauret01a} and \cite{lauret01b}, building on the work of
Heber, J.\ Lauret established a correspondence between rank one
Einstein metric solvable Lie algebras and nilsoliton metric Lie
algebras.  A metric solvable extension of a metric nilpotent Lie
algebra $(\frakn, Q)$ with Lie bracket $[\cdot, \cdot]_\frakn$ is a
solvable Lie algebra $\fraks = \fraka \oplus \frakn$ with Lie bracket
$[\cdot, \cdot]_\fraks$ and inner product $\tilde Q$ such that
$[\fraks,\fraks]_\fraks = \frakn = \fraka^\perp,$
$[\cdot,\cdot]_\fraks|_{\frakn} = [\cdot, \cdot ]_\frakn,$ and $\tilde
Q|_{\frakn \times \frakn} = Q.$ 
 Lauret
proved that a metric nilpotent Lie algebra $(\frakn,Q)$ is nilsoliton
if and only if $(\frakn,Q)$ admits a metric solvable extension
$(\fraks = \fraka \oplus \frakn, \tilde Q)$ such that $\fraka$ is
abelian and the corresponding solvmanifold $(S,\tilde g)$ is Einstein.
(Theorem 3.7 of \cite{lauret01b}).  This theorem may be used to
translate theorems in this paper about nilsoliton metrics to parallel
statements about Einstein solvmanifolds.  Given a nilsoliton metric
Lie algebra, it is the semisimple derivation $D,$ the difference of
the Ricci endomorphism and the scalar multiple of the identity map,
that defines the Einstein solvable extension: that is, $\fraks :=
\boldR H \oplus \frakn$ and $[\cdot,\cdot]_\frakn$ is extended to
$[\cdot,\cdot]_\fraks$ by setting $[H,X]_\fraks := D X$ for $X \in
\frakn.$ Nonabelian nilpotent Lie algebras do not admit Einstein
metrics (\cite{jensen-69}, see also Theorem \ref{niceformula}), so the
derivation $D$ for a nilsoliton metric Lie algebra is trivial if and
only if $\frakn$ is abelian.

In this work, we study nilsoliton metrics using algebraic and
combinatorial methods.  We work in the class of metric (nonassociative
skew) algebras.  A {\em metric algebra} $(\frakn_\mu,Q)$  is a vector
space $\frakn$ equipped with an inner product $Q$ and an algebraic
structure defined by an element $\mu: \frakn \times \frakn \to \frakn$
in the vector space $\Lambda^2 \frakn^\ast \otimes \frakn$ of
skew-symmetric bilinear vector-valued maps.   The expression for the
Ricci form for a nilmanifold extends to define a  ``nil-Ricci''
endomorphism $\Ric_\mu$ for each metric algebra.  Our main interest is
in nilpotent algebras.  If $\Ric_\mu - \beta \Id$ is a derivation of
$\frakn_\mu$ we say that $(\frakn_\mu,Q)$ satisfies the {\em
nilsoliton condition} with {\em nilsoliton constant} $\beta.$

We associate to each nonabelian metric algebra $(\frakn_\mu,Q)$ and
orthonormal basis $\calB$ of eigenvectors for the nil-Ricci
endomorphism an array of combinatorial objects: a set $\Lambda$ of
integer triples encoding the nontrivial structure constants for the
basis, a set of vectors with entries of zero, one and minus one
indexed by $\Lambda,$ a Gram matrix $U$ encoding the inner products of
these vectors, and graph $S(U)$ defined by the matrix $U.$ We will see
that interesting algebraic and geometric properties of
$(\frakn_\mu,Q)$ and $\calB$ are displayed in properties of these
combinatorial objects.

We associate to $(\frakn_\mu,Q)$ a vector $[\alpha^2]$ listing the
squares of the nontrivial structure constants.  It is shown that the
nil-Ricci endomorphism satisfies the nilsoliton condition if and only
if the vector $[\alpha^2]$ is a solution to the matrix equation $U v =
\boldone$ where $\boldone$ is a column vector with every entry a one,
thus reducing the difficult tensorial problem of finding Einstein and
soliton metrics to a problem of linear algebra.  We define a
generalized Cartan matrix in terms of the Gram matrix $U$ and apply
results from the theory of Kac-Moody algebras to analyze the
existence, uniqueness, and properties of solutions to the special
systems $Uv=\boldone.$ From these results we get a procedure for
finding all nilsoliton metrics in a fixed dimension, and hence all
rank one Einstein solvmanifolds of any given dimension.  Once we have
a  rank one Einstein solvmanifold it is not hard to find all higher
rank extensions by computing  the derivation algebra of the
nilradical.

We study the linear systems arising from some examples, including
nilpotent Lie groups associated to rank one symmetric spaces of
noncompact type, and families of filiform nilpotent Lie groups.

 When elements $\mu_1$ and $\mu_2$ in $\Lambda^2 \frakn^\ast \otimes
\frakn$ define algebras $\frakn_{\mu_1}$ and $\frakn_{\mu_2}$ on the
inner product space $(\frakn,Q)$ so that the metric algebras
$(\frakn_{\mu_1},Q)$ and $(\frakn_{\mu_2},Q)$ satisfy the nilsoliton
condition, in general the sum $\mu_1 + \mu_2$ does not define an
algebra $\frakn_{\mu_1 + \mu_2}$ so that $(\frakn_{\mu_1 + \mu_2},Q)$
again satisfies the nilsoliton condition; a sufficient condition to
guarantee this is found.  This criterion is useful for constructing
nilsoliton metrics, and for showing in certain situations that a
metric Lie algebra of interest is nilsoliton.

\subsection{Summary of Results} A {\em nilmanifold} is a connected
Riemannian manifold with a transitive nilpotent group of isometries.
A nilmanifold is isometric to a nilpotent Lie group $N$ endowed with a
left-invariant Riemannian metric $g.$ P.\ Eberlein and others have
studied the geometry of two-step nilmanifolds in depth (see
\cite{eberlein04} for a survey and references), but little has been
done in the general higher-step case, where the expressions for the
Levi-Civita connection and the exponential map are markedly more
complicated than in the two-step case.

 We associate to the nilmanifold $(N,g)$ the metric Lie algebra
$(\frakn,Q),$ where $\frakn$ is the Lie algebra of $N$ and $Q$ is the
restriction of the metric $g$ on $N$ to the tangent space at the
identity $T_eN \cong \frakn.$ As we are always interested in purely
local properties, we identify $(N,g)$ with $(\frakn,Q),$ and when we
refer to things like the connection, curvature, etc.\ for the metric
Lie algebra $(\frakn,Q),$ we mean the structures associated to
$(\frakn,Q)$ coming from the connection, curvature, etc.\ for the
homogeneous space $(N,g).$

Although our primary aim is to investigate nilsoliton metrics for
metric nilpotent Lie algebras, it is convenient to generalize the
setting and allow $(\frakn_\mu,Q)$ to be a metric  algebra, where $\mu
= [\cdot, \cdot]$ is in $\Lambda^2 \frakn^\ast \otimes \frakn.$ Let
$\calB = \{X_i\}_{i=1}^n$ be a $Q$-orthonormal basis of $\frakn_\mu.$
(We always assume that bases are ordered.) The {\em nil-Ricci
endomorphism} $\Ric_\mu$ is defined as $\la \Ric_\mu X, Y \ra =
\ric_\mu(X,Y),$ where
\begin{equation}\label{nilriccicurvature} \ric_\mu (X,Y) =
-\frac{1}{2}\sum_{i=1}^n\la [X,X_i], [Y,X_i]\ra+\frac{1}{4} \sum_{i,j
= 1}^n\la [X_i,X_j], X\ra \la [X_i,X_j], Y\ra
\end{equation} for $X, Y \in \frakn_{\mu}.$ (We often write an inner
product $Q(\cdot,\cdot)$ as $\la \cdot, \cdot \ra.$)  When $\frakn$ is
a nilpotent metric Lie algebra, the nil-Ricci endomorphism is the
Ricci endomorphism.   If all elements of the basis are eigenvectors
for the nil-Ricci endomorphism $\Ric_\mu,$ we call the orthonormal
basis a {\em Ricci eigenvector basis}.  By the symmetry of the
nil-Ricci form, a Ricci eigenvector basis always exists.

Now we define some combinatorial objects associated to a set of
integer triples $\Lambda \subset \{ (i,j,k) \, | \, 1 \le i,j,k \le n
\}.$ We will later consider subsets of the following sets:
\begin{align*} \Upsilon_n &= \{ (i,j,k) \, | \, 1 \le i < j \le n,
1\le k \le n\} \\ \Psi_n &= \{ (i,j,k) \, | \, 1 \le i < j \le n, 1
\le k \le n, i \ne k, j \ne k \} \\ \Theta_n &= \{ (i,j,k)\} \, | \, 1
\le i < j < k \le n \}
\end{align*} For $1 \le i,j,k \le n,$ define the $1 \times n$ row
vector $Y_{ij}^k$ to be $\varepsilon_i^T + \varepsilon_j^T -
\varepsilon_k^T,$ where $\{ \varepsilon_i\}_{i = 1}^n$ is the standard
orthonormal basis for $\boldR^n.$ We call the vectors in $\{Y_{ij}^k
\, | \, (i,j,k) \in \Lambda \}$ {\em root vectors} for $\Lambda.$ Let
$Y_1,Y_2, \ldots, Y_m$ (where $m = \| \Lambda \|$) be an enumeration
of the root vectors in dictionary order.  We define the {\em root
matrix} $Y_\Lambda$ for $\Lambda$ to be the $m \times n$ matrix whose
rows are the root vectors $Y_1, Y_2, \cdots, Y_m.$ The {\em Gram
matrix} $U_\Lambda$ {\em for} $\Lambda$ is the $m \times m$ matrix
defined by $U_\Lambda = Y_\Lambda Y_\Lambda^T;$ the $(i,j)$ entry of
$U_\Lambda$ is the inner product of the $i$th and $j$th root vectors.
Because there are a finite number of root vectors in any dimension,
there are only finitely many possible root matrices and Gram matrices
in any dimension.

The Gram matrix $U = (u_{ij})$ defines a graph $S(U),$ which we will
call the {\em generalized Dynkin diagram} of $U.$  The vertices of the
generalized Dynkin diagram are in one-to-one correspondence with
triples $(i,j,k)$ in $\Lambda.$ Let these vertices be named $n_1, n_2,
\ldots n_m$ when listed in dictionary order.  Between vertices $n_i$
and $n_j$ where $i \ne j$ and $u_{ij} \ne 0$, draw an edge and label
it with $u_{ij}.$ If the label is one, the label is omitted and if the
label is minus one, the edge may be drawn as a dashed line segment.
If $u_{ij}=0,$ the vertices $n_i$ and $n_j$ are not connected by an
edge.  Technically, $S(U)$ is not a generalized Dynkin diagram as
defined in \cite{kac90} since $U$ is not a generalized Cartan matrix.
There is a true generalized Cartan matrix $A$ associated to $U$ when
$\Lambda \subset \Psi_n.$ Its edge-weighted graph $S(A)$ is closely
related to the graph $S(U);$ however, we work work with $S(U)$ as $U$
is more natural than $A$ from a geometric point of view, and $U$ is
uniquely defined, whereas there are different good choices for the
matrix $A$ depending on the situation.

Next we define some structures associated to a nonabelian
$n$-dimensional metric algebra $(\frakn_\mu,Q)$ and an orthonormal
basis $\calB$ for $(\frakn_\mu,Q).$ Let $\alpha_{ij}^k$ denote the
structure constant $\la [X_i,X_j], X_k \ra.$ The set
$\Lambda(\frakn_\mu,\calB) = \{ (i,j,k) \, | \, i < j, \alpha_{ij}^k
\ne 0\}$ indexes the set of nonzero structure constants, ignoring
repetitions due to skew-symmetry. The set of triples
$\Lambda(\frakn_\mu,\calB)$ is a subset of $\Upsilon_n.$ We may say
that the root vectors, root matrix, and Gram matrix for $\Lambda =
\Lambda(\frakn_\mu,Q)$ are {\em for $(\frakn_\mu,Q)$ with respect to
the basis $\calB.$} Let $[\alpha^2]_{(\frakn_\mu,\calB)}$ denote the
$m \times 1$ vector with the entries $(\alpha_{ij}^k)^2$ for $(i,j,k)
\in \Lambda(\frakn_\mu, \calB),$ again taken in dictionary order.  We
call $[\alpha^2]_{(\frakn_\mu,\calB)}$ the {\em structure vector for
$(\frakn_\mu, Q)$ with respect to $\calB.$} The structure vector
$[\alpha^2]$ defines a weighting $w$ of the vertices of $S(U),$
assigning the $i$th vertex $n_i$ the $i$th entry of $[\alpha^2],$ so
that the vertex for the triple $(i,j,k)$ has the weight
$(\alpha_{ij}^k)^2.$ We will sometimes abbreviate our notation, for
example writing $\Lambda$ for $\Lambda(\frakn_\mu,\calB)$ or omitting
subscripts, when it is clear what is meant.  A single nilmanifold may
have different root vectors, structure vectors, and Gram matrices,
depending on the choice of Ricci eigenvector basis.  Nonisometric
nilmanifolds may have the same Gram matrices, structure vectors, and
Dynkin diagrams.

In Section \ref{curvature}, we give simple formulas for computing
Ricci and sectional curvature for metric Lie algebras relative to
Ricci eigenvector bases, and we illustrate these formulas with
examples.

In Section \ref{properties}, we show that a metric algebra
$(\frakn_\mu,Q)$ satisfies the nilsoliton condition if and only if the
structure vector with respect to $(\frakn_\mu,Q)$ and any Ricci
eigenvector basis $\calB$ is a solution to a linear system associated
to $(\frakn_\mu,Q).$ We use $[a]_{m \times n}$ to denote the $m \times
n$ matrix all of whose entries are the real number $a.$
\begin{theorem}\label{mainthm} Let $(\frakn_\mu,Q)$ be a nonabelian
metric algebra with Ricci eigenvector basis $\calB.$ Let $U$ and
$[\alpha^2]$ be the Gram matrix and the structure vector for
$(\frakn_\mu,Q)$ with respect to $\calB.$ Then $(\frakn_\mu,Q)$
satisfies the nilsoliton condition with nilsoliton constant $\beta$ if
and only if $U [\alpha^2] = -2\beta \onevector{m}.$
\end{theorem}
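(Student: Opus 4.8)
The plan is to diagonalize, reduce the derivation condition to a linear condition on the Ricci eigenvalues, compute those eigenvalues in terms of $[\alpha^2]$, and assemble. Since $\calB$ is a Ricci eigenvector basis, write $\Ric_\mu X_i = \lambda_i X_i$, so that $D := \Ric_\mu - \beta\Id$ acts diagonally with $DX_i = d_i X_i$, where $d_i = \lambda_i - \beta$.

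First I would reduce the derivation condition to an equation on the $d_i$. Expanding $D[X_i,X_j] = [DX_i,X_j]+[X_i,DX_j]$ in the basis $\calB$ and comparing $X_k$-coefficients, the relation $[X_i,X_j]=\sum_k\alpha_{ij}^k X_k$ turns the derivation condition into $\alpha_{ij}^k(d_k - d_i - d_j) = 0$ for all $i,j,k$. Since both sides are antisymmetric in $(i,j)$ it suffices to range over $i<j$, and the condition is nontrivial exactly on the triples $(i,j,k)\in\Lambda$, where it reads $d_i + d_j - d_k = 0$, that is, $Y_{ij}^k\, d = 0$. Collecting these over $\Lambda$ gives the compact form: $D$ is a derivation if and only if $Y_\Lambda\, d = 0$, where $d = (d_1,\dots,d_n)^T$.

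Second I would compute the eigenvalues. Since $\lambda_i = \ric_\mu(X_i,X_i)$, substituting $[X_i,X_p]=\sum_k\alpha_{ip}^kX_k$ into \eqref{nilriccicurvature} expresses $\lambda_i$ as a weighted sum of the squared structure constants $(\alpha_{ij}^k)^2$. The key bookkeeping step is to track, for a fixed triple $(i,j,k)\in\Lambda$ of weight $(\alpha_{ij}^k)^2$, exactly which eigenvalues it feeds: the first sum in \eqref{nilriccicurvature} contributes $-\tfrac12(\alpha_{ij}^k)^2$ to each of $\lambda_i$ and $\lambda_j$ (using the skew-symmetry $\alpha_{ji}^k=-\alpha_{ij}^k$), while the second sum contributes $+\tfrac12(\alpha_{ij}^k)^2$ to $\lambda_k$ (the ordered pairs $(i,j)$ and $(j,i)$ each contribute $\tfrac14$). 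These three numbers are precisely $-\tfrac12(\alpha_{ij}^k)^2$ times the entries $+1,+1,-1$ of the root vector $Y_{ij}^k$ at positions $i,j,k$, and the same identification persists, through the automatic cancellations in $Y_{ij}^k=\varepsilon_i+\varepsilon_j-\varepsilon_k$, even when $k$ coincides with $i$ or $j$. Summing over $\Lambda$ yields the clean vector identity $\lambda = -\tfrac12\,Y_\Lambda^T[\alpha^2]$.

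Finally I would combine the two pieces. Writing $d = \lambda - \beta\onevector{n}$ and using $Y_\Lambda\onevector{n} = \onevector{m}$ (each root vector $\varepsilon_i+\varepsilon_j-\varepsilon_k$ has entries summing to $1$) together with the definition $U = Y_\Lambda Y_\Lambda^T$, I compute
\[
Y_\Lambda\, d = Y_\Lambda\lambda - \beta\, Y_\Lambda\onevector{n} = -\tfrac12\,Y_\Lambda Y_\Lambda^T[\alpha^2] - \beta\onevector{m} = -\tfrac12\,U[\alpha^2] - \beta\onevector{m}.
\]
Hence $Y_\Lambda d = 0$ if and only if $U[\alpha^2] = -2\beta\onevector{m}$, which together with the equivalence of the first step proves the theorem. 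The one place demanding genuine care is the eigenvalue computation of the second step: the contributions must be assembled so that the coincidence cases $k=i$ or $k=j$ are absorbed correctly by the root-vector entries, and it is this combinatorial accounting, rather than any hard analysis, that is the crux of the argument.
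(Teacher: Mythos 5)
Your proof is correct and takes essentially the same route as the paper: your first two steps reproduce, with inline proofs, the paper's Theorem \ref{hebervar} (a diagonal map $D$ is a derivation if and only if its eigenvalue vector lies in the kernel of the root matrix) and Theorem \ref{riccitensor} (the Ricci vector equals $-\tfrac{1}{2}Y^T[\alpha^2]$, including the cancellation that handles triples with $k=i$ or $k=j$), and your final assembly via $Y\onevector{n}=\onevector{m}$ and $U=YY^T$ is exactly the paper's proof of Theorem \ref{mainthm}.
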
 Thus, to find all nilsoliton metrics, one needs to find
all vectors $v$ with positive entries that are solutions to systems of
the form $Uv=-2\beta \onevector{m},$ where $U$ is the Gram matrix
defined by a set of root vectors.  Choosing structure constants whose
squares are the entries of $v$ will define a metric algebra
$(\frakn_\mu,Q)$ satisfying the nilsoliton condition.  The sets of
integer triples are enumerable, so this gives an algorithm for finding
nilsolitons.

A solution to the equation $Uv=-2\beta\onevector{m}$ gives a weighting
$w$ on the vertices of the Dynkin diagram $S(U)$ with the property
that for all $i,$
\begin{equation}\label{weighting} 3 w(n_i) + \sum_{ u_{ij} \ne 0}
u_{ij} w(n_j) = -2\beta ;
\end{equation} that is, thrice  the value of the weighting at a vertex
$n_i$ plus the edge-weighted sum of the values at adjacent vertices is
constant.  This interpretation of solutions allows one to use the
symmetries of the generalized Dynkin diagram $S(U)$ to find symmetries
in the solution space of the system $Uv=-2\beta \onevector{m}.$

Elementary properties of the Gram matrices  for metric nilpotent Lie
algebras are derived in Section \ref{properties}.  In Section
\ref{jacobi}, Theorem \ref{jacobicondition} interprets the Jacobi
condition for a metric nilpotent algebra $\frakn$ in terms of the
weighted graph $S(U_\Lambda),$ so that one may check whether $\frakn$
is a Lie algebra by looking at the graph $S(U_\Lambda).$ The proof of
Theorem \ref{mainthm} is in Section \ref{proof of main thm}, along
with some theorems describing properties of derivations and gradings
of nilpotent metric algebras that are needed for the proof.  Some of
the results in Section \ref{derivations} about semisimple derivations
of nilpotent algebras may be of independent interest.

In Section \ref{solutions} the solution spaces to linear systems of
the form $Uv=-2\beta\onevector{m}$ are analyzed.  It is shown in
Theorem \ref{gramproperties} that if a metric algebra $(\frakn_\mu,Q)$
with Ricci eigenvector basis $\calB$ has the property that
$\alpha_{ij}^j = 0$ for all $i$ and $j,$ then the Gram matrix $U$ can
be written in the form $U = cA + d[1]_{m \times m}$ where $A$ is a
generalized Cartan matrix and $c$ and $d$ are positive.  We call $A$
an {\em associated matrix} for $U.$ Theorem \ref{solsp} characterizes
the solution spaces for the linear systems $Uv=(cA + d[1]_{m \times
m})v=-2\beta\onevector{m}$ by the properties of a generalized Cartan
matrix $A$ associated to $U.$ We recall that  an irreducible
generalized Cartan matrix is one of three types: finite type, affine
type, or indefinite type.  For a column vector $u = (u_1, u_2,
\ldots,)^T$ we write $u > 0$ ($u \ge 0$) to denote that $u_i > 0$
($u_i \ge 0$) for all $i.$
\begin{theorem}\label{solsp} 
Let $A$ be a $m \times m$ irreducible generalized Cartan matrix and
let $U = cA + d \onematrix{m}{m},$ where $c$ and $d$ are positive.
Suppose $\beta < 0.$ Then there exists a unique real number $\mu$ such
that solutions to
\begin{equation}
\label{above} 
Uv = -2\beta \onevector{m}
\end{equation}
are the same as solutions to
\begin{equation}
Av = \mu \onevector{m}.
\end{equation}
There are five mutually exclusive possibilities for the common
solution space:
\begin{enumerate}
\item[(Fin)]{$A$ is of finite type, $\mu > 0,$ Equation \eqref{above}
has a unique solution $v,$ and $v > 0.$ }
\item[(Aff)]{$A$ is of affine type, $\mu = 0,$ Equation \eqref{above}
has a unique solution $v,$ and $v > 0.$ }
\item[(Ind1)]{$A$ is of indefinite type, $\mu < 0,$ Equation
\eqref{above} has a positive solution $v_0,$ the general solution to
Equation \eqref{above} is
\[ v_0 + \ker A = v_0 + \ker U = v_0 + \ker Y^T,\] and the 
intersection of the solution space and the cone $\{ v \in \boldR^m \,
| \, v \ge 0 \}$ is a simplex of dimension $\ker A$ parallel to
$\onevector{m}^\perp.$}
\item[(Ind2)]{$A$ is of indefinite type, $\mu < 0$ and Equation
\eqref{above} is consistent but has no solution $v$ with $v > 0.$}
\item[(Ind3)]{$A$ is of indefinite type, $\mu < 0,$ and Equation
\eqref{above} is not consistent.}
\end{enumerate}
Additionally, if $U$ is the Gram matrix for a metric algebra
$(\frakn_\mu,Q)$ with respect to a Ricci eigenvector basis $\calB,$
then Case (Ind3) can not occur.
\end{theorem}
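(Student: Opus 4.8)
The plan is to reduce \eqref{above} to Kac's trichotomy for generalized Cartan matrices (Theorem~4.3 of \cite{kac90}) by exploiting the rank-one splitting $U = cA + d\,\boldone\boldone^{T}$, where $\boldone=\onevector{m}$ and $\onematrix{m}{m}=\boldone\boldone^{T}$. For any $v$ one has $Uv = cAv + d(\boldone^{T}v)\boldone$, so, writing $\sigma=\boldone^{T}v$, the equation $Uv=-2\beta\boldone$ is equivalent to the pair ``$Av=\mu\boldone$ and $c\mu+d\sigma=-2\beta$.'' First I would record this equivalence and extract the unique admissible $\mu$: a solution of \eqref{above} forces $Av$ to be a specific multiple of $\boldone$, and feeding $\sigma=\boldone^{T}v$ back through the scalar constraint pins $\mu$ down. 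In the nonsingular case this reads $\mu=-2\beta/\bigl(c+d\,\boldone^{T}A^{-1}\boldone\bigr)$, and the same bookkeeping isolates $\mu$ in the degenerate cases; the precise way the two solution sets are matched is then read off case by case.

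Next I would invoke Theorem~4.3 of \cite{kac90}, which sorts an irreducible GCM into finite, affine, or indefinite type and provides, for both $A$ and $A^{T}$, positive vectors controlling the sign of $Au$. In the finite case $A$ is invertible, so the constraint gives $\mu\bigl(c+d\,\boldone^{T}A^{-1}\boldone\bigr)=-2\beta>0$; since the inverse of an irreducible finite-type GCM is entrywise positive, the factor is positive, whence $\mu>0$ and $v=\mu A^{-1}\boldone>0$. In the affine case, writing $\ker A=\boldR u$ with $u>0$ and taking $u'>0$ with $A^{T}u'=0$, the computation $(u')^{T}Uw=d(\boldone^{T}w)\sum_i u'_i$ shows $\ker U=0$, so \eqref{above} has a unique solution; checking that $tu$ solves it for $t=-2\beta/(d\sum_i u_i)>0$ identifies that solution as $tu>0$ and gives $\mu=0$. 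In the indefinite case, assuming a positive solution $v_{0}$ exists, pairing $Av_{0}=\mu\boldone$ against the positive vector $u'$ with $A^{T}u'<0$ yields $\mu\sum_i u'_i=(A^{T}u')^{T}v_{0}<0$, so $\mu<0$.

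For the indefinite case I would then separate the three alternatives by the consistency and positivity of \eqref{above}. When a positive solution $v_{0}$ exists (Ind1), I would show $\boldone\perp\ker A$, so that $\ker U=\ker A$ and the solution space $v_{0}+\ker A$ lies in the constant-sum hyperplane $\{\,\boldone^{T}v=\boldone^{T}v_{0}\,\}$, hence is parallel to $\boldone^{\perp}$; for a Gram matrix this kernel is also $\ker Y^{T}$. Because every solution has the same nonnegative coordinate sum, each coordinate is confined to $[0,\boldone^{T}v_{0}]$, so the slice of this affine space by $\{v\ge 0\}$ is bounded, hence a compact convex polytope of dimension $\dim\ker A$; identifying it as a simplex is a finite-dimensional convexity computation. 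The remaining alternatives are that \eqref{above} is consistent but misses the positive cone (Ind2) or is inconsistent (Ind3). I expect the main obstacle to be exactly this indefinite bookkeeping: controlling when $\ker U$ collapses onto $\ker A$, and pinning both the sign of $\mu$ and the geometry of the solution polytope without the invertibility available in the finite and affine cases.

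Finally, to exclude (Ind3) for a genuine Gram matrix I would use $U=YY^{T}$. Each root vector $\varepsilon_i^{T}+\varepsilon_j^{T}-\varepsilon_k^{T}$ has coordinate sum $1+1-1=1$, so $Y\onevector{n}=\onevector{m}$; thus $\boldone\in\operatorname{im}Y=\operatorname{im}(YY^{T})=\operatorname{im}U$, using the standard identity $\operatorname{im}(YY^{T})=\operatorname{im}Y$. Hence $Uv=-2\beta\boldone$ is always consistent, which is precisely the exclusion of case (Ind3).
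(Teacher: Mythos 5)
Your proposal follows the same route as the paper's proof: the rank-one splitting $U=cA+d\,\onematrix{m}{m}$ with $\onematrix{m}{m}v=(\onevector{m}\cdot v)\onevector{m}$, reduction of $Uv=-2\beta\onevector{m}$ to $Av=\mu\onevector{m}$ plus a scalar constraint, Kac's trichotomy (Theorem \ref{kac4-3}), and, for the exclusion of (Ind3), the observation that $Y\onevector{n}=\onevector{m}$ puts $\onevector{m}$ in $\range U$ — the paper makes exactly this argument, phrased via $\ker U=\ker Y^{T}\subset\onevector{m}^{\perp}$ instead of ranges. Within that common frame, two of your local arguments are genuinely different and arguably cleaner: in the affine case you prove $\ker U=0$ by pairing against a positive left null vector $u'$ of $A$, which gives uniqueness outright (the paper instead uses Kac's property that $Av\ge 0$ forces $Av=0$ together with $\dim\ker A=1$), and in the indefinite case you actually prove $\mu<0$ by pairing $Av_{0}=\mu\onevector{m}$ against $u'>0$ with $A^{T}u'<0$ — a sign claim that appears in the statement but is nowhere verified in the paper's proof. (In the finite case you import the entrywise positivity of $A^{-1}$; that fact is true, but you can avoid it: $A(A^{-1}\onevector{m})=\onevector{m}\ge 0$ and Kac's (Fin) property already give $A^{-1}\onevector{m}>0$.)

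The one genuine gap is in (Ind1): you announce ``I would show $\onevector{m}\perp\ker A$, so that $\ker U=\ker A$'' and flag it as the main obstacle, but never supply the argument. You in fact already hold the missing ingredient. In the setting at hand $U$ is symmetric, hence so is $A$, and for $w\in\ker A$ one gets $\mu\,(\onevector{m}^{T}w)=(Av_{0})^{T}w=v_{0}^{T}Aw=0$; since your pairing argument gives $\mu<0\ne 0$, this yields $\onevector{m}^{T}w=0$, whence $Uw=cAw+d(\onevector{m}^{T}w)\onevector{m}=0$ and $\ker A\subseteq\ker U$. The reverse inclusion comes from consistency: $\onevector{m}\in\range U=(\ker U)^{\perp}$, so for $w\in\ker U$ one has $\onevector{m}^{T}w=0$ and then $0=Uw=cAw$. (The paper instead gets $\ker A\subseteq\ker U$ by a dimension count, rescaling the family $v_{0}+w$, $w$ small in $\ker A$, into the solution set of \eqref{above}, so that $\dim\ker A\le\dim\ker U$.) Note also that the same fact $\ker U\perp\onevector{m}$ is what makes your ``unique $\mu$'' well defined across all solutions, so it deserves to be proved, not deferred. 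Finally, be aware that the ``simplex'' claim is asserted rather than proved both by you and by the paper: what the argument delivers is a compact convex polytope of dimension $\dim\ker A$ (a section of an affine subspace by the nonnegative orthant need not be a simplex — it can be, e.g., a square), so the ``finite-dimensional convexity computation'' you promise cannot identify it as a simplex in general; this looseness, however, is inherited from the paper itself.
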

Each of the first four cases actually occurs in systems arising from
metric nilpotent Lie algebras; we present examples of each type.

In Section \ref{proof of allnilsolitons} we consider parameter spaces
of metric nilpotent Lie algebras.  Before we state the main result of
the section, we define some terminology.  Fix an orthonormal basis
$\calB$ of an $n$-dimensional inner product space $(\frakn,Q).$ Let
$\calR(\frakn,\calB)$ be the subset of $\Lambda^2 \frakn^\ast \otimes
\frakn$ consisting of all $\mu$ so that $\calB$ is a Ricci eigenvector
basis for $(\frakn_\mu,Q).$ Every $n$-dimensional metric algebra is
metrically isomorphic to $(\frakn_\mu,Q)$ for some $\mu$ in
$\calR(\frakn,\calB)$ because Ricci eigenvector bases always exist.

We let $\calT(\frakn,\calB)$ be the set of all nontrivial elements
  $\mu$ of that $\Lambda^2 \frakn^\ast \otimes \frakn$ have the
  property that $\Lambda(\frakn_\mu,\calB)$ is a subset of $\Theta_n.$
  We will say that an element of $\calT(\frakn,\calB)$ is $\calB$-{\em
  triangular}.  Note that for all $\mu$ in $\calT(\frakn,\calB),$ the
  algebra $\frakn_\mu$ is nilpotent.  Every $n$-dimensional metric
  algebra satisfying the nilsoliton condition is represented by an
  element of $\calR(\frakn,\calB) \cap \calT(\frakn,\calB)$ (by
  Corollary \ref{goodbasis} in Section \ref{geometry}).

Let $\calL(\frakn,\calB)$ be the subset of $\Lambda^2 \frakn^\ast
\otimes \frakn$ of Lie algebras: the algebraic variety consisting of
those $\mu$ so that $\frakn_\mu$ satisfies the Jacobi identity.  The
general linear group $GL_n(\boldR)$ acts on $\Lambda^2 \frakn^\ast
\otimes \frakn$ by $(g \cdot \mu) (X,Y) = g \mu(g^{-1}X,g^{-1}Y).$ Two
Lie brackets $\mu$ in $\calL(\frakn,\calB)$ define isomorphic Lie
algebras if they are in the same $GL_n(\boldR)$ orbit.  Hence the
space of $n$-dimensional Lie algebras can be identified with quotient
space of $\calL(\frakn,\calB)$ under this $GL_n(\boldR)$ action.

Fix an orthonormal basis for the metric algebra $(\frakn_\mu,Q).$ The
set $\Lambda^2 \frakn^\ast \otimes \frakn$ is partitioned into
equivalence classes $\Omega_\Lambda$ that are level sets for the
function assigning an element $\mu$ to the set
$\Lambda(\frakn_\mu,\calB)$ of nonzero structure constants for
$(\frakn_\mu,Q)$ with respect to the basis $\calB.$ When $\Lambda =
\emptyset,$ we call $\Omega_\Lambda$ {\em trivial}.  For each element
$\mu$ in a nontrivial equivalence class $\Omega_\Lambda,$ the root
matrix $Y$ and Gram matrix $U$ for $(\frakn_\mu,Q)$ with respect to
$\calB$ are the same, as they depend only on the common set
$\Lambda(\frakn_\mu,\calB).$ This partition restricts to a partition
of $\calT(\frakn,\calB)$ because we can write $\calT(\frakn,\calB)$ as
the union $\cup_{\varnothing \ne \Lambda \subset \Theta_n }
\Omega_\Lambda$ of equivalence classes $\Omega_\Lambda$ for all
nontrivial subsets $\Lambda$ of $\Theta_n.$

The description of the set of metric algebras with a specified set of
nonzero structure constants that satisfy the nilsoliton condition is
given in the next theorem.
\begin{theorem}\label{allnilsolitons}
Let $(\frakn,Q)$ be an inner product space with orthonormal basis
 $\calB.$ Let $\Omega_\Lambda \subset \Psi_n.$ Let $U$ denote the Gram
 matrix for elements of $\Omega_\Lambda$ and let $m = \| \Lambda \|.$
 Suppose $\beta < 0.$ Then the set of metric algebras $(\frakn_\mu,Q)$
 in $\Omega_\Lambda$ whose structure vector satisfies $U [\alpha^2] =
 -2 \beta \boldone$ is
\begin{itemize}
\item{finite of size $2^m$, if $U$ is nonsingular and there is a
unique positive solution to $U v = -2\beta \boldone,$ }
\item{a finite disjoint union of the interiors of $2^m$ simplices of
dimension $\ker U$ if $U$ is singular and $U v = -2\beta \boldone$ has
a solution $v > 0,$ or }
\item{empty if there are no solutions to $U v = -2\beta \boldone$ with
$v > 0.$ }
\end{itemize}
Also, the endomorphism $\Ric_\mu$ is the same for all elements
$\mu$ of $\Omega_\Lambda \cap \calR(\frakn,\calB)$ such that $U
[\alpha^2]_\mu = -2 \beta \boldone.$
\end{theorem}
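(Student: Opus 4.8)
The plan is to realize the set in question as the preimage, under a simple $2^m$-to-one map, of the positive solution set of $Uv = -2\beta\boldone$, and then to read off its structure from Theorem \ref{solsp}. Fix once and for all the dictionary enumeration $(i_1,j_1,k_1),\dots,(i_m,j_m,k_m)$ of $\Lambda$. Since $\Lambda \subset \Psi_n \subset \Upsilon_n$, an element $\mu$ of $\Lambda^2 \frakn^\ast \otimes \frakn$ is determined by its structure constants $\alpha_{ij}^k$ with $i<j$, and $\mu$ lies in $\Omega_\Lambda$ exactly when $\alpha_{ij}^k \ne 0$ precisely for $(i,j,k)\in\Lambda$. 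Thus $\mu \mapsto (\alpha_{i_1 j_1}^{k_1},\dots,\alpha_{i_m j_m}^{k_m})$ identifies $\Omega_\Lambda$ with $(\boldR \setminus \{0\})^m$, and under this identification the structure-vector map $\Phi\colon \mu \mapsto [\alpha^2]_\mu$ is coordinatewise squaring onto the open positive orthant $(\boldR_{>0})^m$. This $\Phi$ is onto and exactly $2^m$-to-one: its restriction to each of the $2^m$ open sign-orthants is a diffeomorphism onto $(\boldR_{>0})^m$. The set we must describe is therefore $\Phi^{-1}(\mathcal{S})$, where $\mathcal{S} = \{\, v \in \boldR^m : v > 0,\ Uv = -2\beta\boldone \,\}$.

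Next I would analyze $\mathcal{S}$ with Theorem \ref{solsp}. Since $\Lambda \subset \Psi_n$ there are no triples $(i,j,i)$ or $(i,j,j)$ in $\Lambda$, so $\alpha_{ij}^j = 0$ for all $i,j$, and Theorem \ref{gramproperties} writes $U = cA + d\onematrix{m}{m}$ with $A$ an irreducible generalized Cartan matrix and $c,d>0$; hence Theorem \ref{solsp} applies verbatim. In cases (Fin) and (Aff) the equation $Uv = -2\beta\boldone$ has a unique solution and it is positive, so $\mathcal{S}$ is a single point, $U$ is nonsingular, and $\Phi^{-1}(\mathcal{S})$ consists of its $2^m$ preimages, giving the first bullet. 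In case (Ind1), $U$ is singular and $\mathcal{S}$ is the relative interior of a simplex of dimension $\dim \ker U$; restricting $\Phi$ to each sign-orthant carries this interior diffeomorphically onto one component of $\Phi^{-1}(\mathcal{S})$, yielding the $2^m$ disjoint open cells of the second bullet. In cases (Ind2) and (Ind3) there is no positive solution, so $\mathcal{S} = \varnothing$ and $\Phi^{-1}(\mathcal{S}) = \varnothing$, the third bullet; since $U$ is a genuine Gram matrix, Theorem \ref{solsp} excludes (Ind3) in any event.

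For the final assertion about $\Ric_\mu$, the key computation is that expanding \eqref{nilriccicurvature} on a diagonal pair $(X_p,X_p)$ produces only \emph{squares} of structure constants, giving $\la \Ric_\mu X_p, X_p\ra = -\half\,(Y^T[\alpha^2]_\mu)_p$ for every $\mu \in \Omega_\Lambda$, where $Y = Y_\Lambda$ is the root matrix. For $\mu \in \calR(\frakn,\calB)$ the off-diagonal entries $\ric_\mu(X_p,X_q)$ (which do involve sign-dependent products of distinct structure constants) vanish, so $\Ric_\mu$ is exactly the diagonal endomorphism with entries $-\half\,Y^T[\alpha^2]_\mu$. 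It remains to check this vector is constant across the solution set: if $[\alpha^2]_\mu$ and $[\alpha^2]_{\mu'}$ both solve $Uv = -2\beta\boldone$, their difference $x$ lies in $\ker U$, and since $U = Y Y^T$ gives $x^T U x = \| Y^T x\|^2$, the relation $Ux = 0$ forces $Y^T x = 0$; that is, $\ker U = \ker Y^T$ (as also recorded in case (Ind1) of Theorem \ref{solsp}). Hence $Y^T[\alpha^2]_\mu = Y^T[\alpha^2]_{\mu'}$ and $\Ric_\mu = \Ric_{\mu'}$.

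I expect the only real subtlety to be the second bullet: the components of $\Phi^{-1}(\mathcal{S})$ are images of the flat open simplex $\mathcal{S}$ under branches of coordinatewise square root, hence are genuinely curved cells rather than affine simplices, and the statement should be read as asserting that each is carried diffeomorphically (indeed real-analytically) by $\Phi$ onto $\mathcal{S}$, preserving its dimension $\dim \ker U$. The remainder is bookkeeping resting on Theorems \ref{mainthm}, \ref{gramproperties}, and \ref{solsp}.
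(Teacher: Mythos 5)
Your skeleton is the same as the paper's: encode the $2^m$ sign choices by the entrywise squaring map (the paper phrases this as taking square roots entrywise with all sign choices), classify the positive solution set via Theorem \ref{solsp}, and deduce the constancy of $\Ric_\mu$ from $\bfRic^\calB_\mu = -\half Y^T[\alpha^2]_\mu$ together with $\ker U = \ker Y^T$; indeed your identity $x^T U x = \|Y^T x\|^2$ is a cleaner, self-contained proof of $\ker U = \ker Y^T$ than the paper's appeal to case (Ind1) of Theorem \ref{solsp}. The genuine gap is the sentence asserting that Theorem \ref{gramproperties} writes $U = cA + d\onematrix{m}{m}$ with $A$ an \emph{irreducible} generalized Cartan matrix, so that Theorem \ref{solsp} ``applies verbatim.'' Theorem \ref{gramproperties} asserts no irreducibility, and irreducibility can genuinely fail under the hypotheses of the theorem: for $\Lambda = \{(1,2,3),(1,2,4)\} \subset \Psi_4$ one gets $U = \left(\begin{smallmatrix} 3 & 2 \\ 2 & 3 \end{smallmatrix}\right)$, and since $U$ has entries equal to $2$, the only associated matrix supplied by Theorem \ref{gramproperties} is $A = 2U - 4\onematrix{2}{2} = 2\,\Id$, which is decomposable. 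Theorem \ref{solsp} rests on the trichotomy of Theorem \ref{kac4-3}, which is stated only for indecomposable matrices (a decomposable generalized Cartan matrix can have blocks of different types, so no single case (Fin)/(Aff)/(Ind) applies). The damage is concentrated in your second bullet: the first and third bullets follow from your $\Phi$ picture with no Kac--Moody input at all, but the claim that the positive solution set is the relative interior of a single simplex of dimension $\dim \ker U$ is left with no support whenever the associated matrix is decomposable.

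The paper is alert to reducibility and spends the second half of its proof on it: when $U$ is decomposable it is permuted into a block-diagonal sum of indecomposable Gram matrices $U_1,\ldots,U_r$, Theorem \ref{solsp} is applied blockwise, and the solution set of $Uv = -2\beta\onevector{m}$ is identified with the product of the blockwise solution sets (uniqueness and positivity pass to products, products of simplices are treated as simplices with added dimensions, and the Ricci vector is the concatenation of the blockwise Ricci vectors); your proposal contains no counterpart to this. Note also that simply copying that paragraph would not fully repair your argument: in the example above $U$ itself is indecomposable---it is $A$ that decomposes---and the $A$-blocks remain coupled inside $U$ through the rank-one term $d\onematrix{m}{m}$, so one cannot apply Theorem \ref{solsp} or a naive product argument block by block on $A$ either. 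A genuine fix would extend Theorem \ref{solsp} to decomposable $A$, for instance by observing that the correspondence between solutions of $Uv = \lambda\onevector{m}$ and of $Av = \mu\onevector{m}$ in its proof never uses irreducibility, and then analyzing the blocks of $A$, which are forced to share the common scalar $\mu$. As written, the step ``Theorem \ref{solsp} applies verbatim'' is the one place where your argument breaks, and it breaks already for configurations with two root vectors.
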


In Section \ref{examples}, we present examples of nilsoliton metrics
constructed by solving equations of the form $Uv = \boldone.$  The
linear systems associated to the higher rank symmetric space $\calP_n
= SL_n(\boldR)/SO(n),$ the quaternionic hyperbolic space $H^2(\boldH)$
and the Cayley plane are computed and solved, yielding continuous
families of Einstein solvmanifolds around the latter two symmetric
spaces and around $\calP_n$ for $n \ge 5.$
 Linear systems $U v =
\boldone$ for the family of the simplest
 filiform Lie algebras,  those
with a codimension one abelian ideal, are computed and used to prove
in Theorem \ref{Ln} the existence of nilsoliton metrics unique up to
scaling on these Lie algebras in each dimension, adding to the
analytic proof of this theorem by J.\ Lauret from \cite{lauret02} a
new algebraic proof.  
 We address the question of which metric
nilpotent Lie algebras admit symmetric derivations in Section
\ref{derivations}, and state a necessary combinatorial condition.  In
Theorem \ref{R6} we give an example of a metric nilpotent Lie algebra
that admits a positive definite derivation but does not admit a
nilsoliton metric; this shows that characteristic nilpotence is not
the only obstruction to the existence of a a nilsoliton metric on a
nilpotent Lie algebra.

In Section \ref{adding}, we address the question of
 when the sum of two nilsoliton
structures on an inner product space is nilsoliton.  For a symmetric
map $D$ of an $n$-dimensional vector space with a basis of
$D$-eigenvectors $\calB=\{X_i\}_{i=1}^n,$ we define the {\em eigenvalue vector
$v_D$ of $D$ with respect to $\calB$} to be $ v_D = (\mu_1, \ldots,
\mu_n)^T,$ where $\mu_i$ is the eigenvalue of $X_i$ for $i = 1,
\ldots, n.$ For a $n$-dimensional metric algebra $(\frakn_\mu,Q)$ 
with orthonormal basis $\calB,$ define the {\em Ricci vector} to be
\begin{equation}\label{riccivector} 
\bfRic^\calB_\mu = (\ric_\mu(X_1,X_1), \ric_\mu(X_2,X_2), \ldots,
\ric_\mu(X_n,X_n))^T .\end{equation} When $\calB$ is an
Ricci eigenvector basis, 
the Ricci vector is the eigenvalue vector
$v_{\Ric}$ for the Ricci endomorphism (recall that Ricci eigenvector bases 
are always assumed to be orthonormal).  We establish a sufficient
condition, formulated in terms of Ricci vectors, for the sum of metric
algebraic structures $\mu_1$ and $\mu_2$ on an inner product space
$(\frakn,Q)$ to satisfy the nilsoliton condition:
\begin{theorem}\label{addingnilsolitons} 
Let $(\frakn,Q)$ be an inner product space with orthonormal basis
$\calB$.  Let $\mu_1$ and $\mu_2$ be nontrivial elements of
$\calR(\frakn, \calB)$ defining metric algebras $(\frakn_{\mu_1},Q)$
and $(\frakn_{\mu_2},Q)$ satisfying the nilsoliton condition with
nilsoliton constants $\beta_1$ and $\beta_2$ respectively.  Suppose
that the set $ \Lambda(\frakn_{\mu_1 + \mu_2},\calB) $ is the disjoint
union of the sets $\Lambda(\frakn_{\mu_1},\calB)$ and
$\Lambda(\frakn_{\mu_2},\calB).$ If there are constants $c_1$ and
$c_2$ so that $ Y_{ij}^k \, \bfRic_{\mu_1}^\calB = c_1$ for all
$(i,j,k)$ in $\Lambda(\frakn_{\mu_2}, \calB),$ $Y_{ij}^k \,
\bfRic_{\mu_2}^\calB = c_2$ for all $(i,j,k)$ in
$\Lambda(\frakn_{\mu_1}, \calB),$ and $-2\beta_1 + c_2= -2\beta_2 +
c_1 =: \beta,$ then $(\frakn_{\mu_1 + \mu_2}, Q)$ satisfies the
nilsoliton condition with nilsoliton constant $\beta.$
\end{theorem}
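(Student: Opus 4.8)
The plan is to argue entirely at the level of derivations, via the characterization behind Theorem \ref{mainthm}: for a metric algebra with Ricci eigenvector basis $\calB$, an endomorphism that is diagonal in $\calB$ with eigenvalue vector $w$ is a derivation of $\frakn_\mu$ exactly when $Y_{ij}^k w = 0$ for every triple $(i,j,k)$ in the index set $\Lambda(\frakn_\mu,\calB)$, since equating $X_k$-coefficients in the Leibniz rule reduces the derivation property to $w_i+w_j-w_k=0$ on the support. Because $\mu_1,\mu_2\in\calR(\frakn,\calB)$, both $\Ric_{\mu_1}$ and $\Ric_{\mu_2}$ are diagonal in $\calB$, with eigenvalue vectors $\bfRic^\calB_{\mu_1}$ and $\bfRic^\calB_{\mu_2}$. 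In this language the four hypotheses read uniformly: $Y_{ij}^k\bfRic^\calB_{\mu_1}=\beta_1$ on $\Lambda(\frakn_{\mu_1},\calB)$ and $=c_1$ on $\Lambda(\frakn_{\mu_2},\calB)$, while $Y_{ij}^k\bfRic^\calB_{\mu_2}=c_2$ on $\Lambda(\frakn_{\mu_1},\calB)$ and $=\beta_2$ on $\Lambda(\frakn_{\mu_2},\calB)$.

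The core step is to test the single endomorphism whose eigenvalue vector is $\bfRic^\calB_{\mu_1}+\bfRic^\calB_{\mu_2}$, shifted by a scalar, against every root vector indexed by $\Lambda(\frakn_{\mu_1+\mu_2},\calB)$. By the disjoint-union hypothesis this index set is $\Lambda(\frakn_{\mu_1},\calB)\sqcup\Lambda(\frakn_{\mu_2},\calB)$, so the test splits into two families. On triples coming from $\mu_1$ the value of $Y_{ij}^k(\bfRic^\calB_{\mu_1}+\bfRic^\calB_{\mu_2})$ is controlled by $\beta_1$ and $c_2$; on triples coming from $\mu_2$ it is controlled by $c_1$ and $\beta_2$. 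Requiring the shifted vector to be annihilated by all these root vectors for a single scalar produces exactly two scalar equations, and the stated identities $-2\beta_1+c_2=-2\beta_2+c_1=\beta$ are exactly these two conditions. Hence $\Ric_{\mu_1}+\Ric_{\mu_2}-\beta\Id$ is a derivation of $\frakn_{\mu_1}$ and of $\frakn_{\mu_2}$ at once; since the bracket of $\frakn_{\mu_1+\mu_2}$ is the sum of the two brackets and the Leibniz identity is additive in the bracket, it is a derivation of $\frakn_{\mu_1+\mu_2}$.

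It then suffices to identify $\Ric_{\mu_1+\mu_2}$ with $\Ric_{\mu_1}+\Ric_{\mu_2}$, for then $\Ric_{\mu_1+\mu_2}-\beta\Id$ is the derivation just constructed and the nilsoliton condition with constant $\beta$ follows. Expanding the nil-Ricci form \eqref{nilriccicurvature} for $\mu_1+\mu_2$ bilinearly gives $\ric_{\mu_1}$, $\ric_{\mu_2}$, and cross terms built from products of a structure constant of $\mu_1$ with one of $\mu_2$. On the diagonal each such product pairs structure constants indexed by the same triple, so the disjoint-union hypothesis forces them to vanish termwise; this gives $\bfRic^\calB_{\mu_1+\mu_2}=\bfRic^\calB_{\mu_1}+\bfRic^\calB_{\mu_2}$ and, in parallel, the block decomposition $U=\left(\begin{smallmatrix}U_1&W\\ W^T&U_2\end{smallmatrix}\right)$ of the Gram matrix through which the same conclusion can be reached from Theorem \ref{mainthm}. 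The off-diagonal cross terms, by contrast, pair structure constants indexed by distinct triples and are not annihilated by disjointness alone; showing that they vanish---equivalently, that $\calB$ remains a Ricci eigenvector basis for $\mu_1+\mu_2$---is the step I expect to be the main obstacle, and the one where the hypotheses must be exploited most carefully (failing outright vanishing, by showing that the cross-term endomorphism together with a scalar is itself a derivation of $\frakn_{\mu_1+\mu_2}$).
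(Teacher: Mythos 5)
Everything you actually prove is correct, and it is the paper's own argument: the paper also reduces the theorem to the identity $\Ric_{\mu_1+\mu_2}=\Ric_{\mu_1}+\Ric_{\mu_2}$ (so that $\calB$ remains a Ricci eigenvector basis for the sum and $\bfRic^\calB_{\mu_1+\mu_2}=\bfRic^\calB_{\mu_1}+\bfRic^\calB_{\mu_2}$), and then runs exactly your two-scalar computation of $Y_{ij}^k\,\bfRic_{\mu_1+\mu_2}$ over the two halves of $\Lambda(\frakn_{\mu_1+\mu_2},\calB)$, concluding via Theorem \ref{nilsolitonconstraint}. The difference is that the paper dispatches the additivity of the Ricci endomorphism in one sentence (``the nil-Ricci endomorphism is linear with respect to the Lie bracket defining it'' when the supports are disjoint), whereas you prove the diagonal part and explicitly flag the off-diagonal cross terms as unresolved. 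So your proposal is incomplete --- it does not establish that $\mu_1+\mu_2$ lies in $\calR(\frakn,\calB)$ --- and that missing step is precisely the step the paper asserts without justification.

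Your suspicion about that step is well founded, and in fact no argument can close it from the stated hypotheses: disjointness of the supports kills only the diagonal cross terms, and when the off-diagonal ones survive, the conclusion fails. Take $\frakn=\boldR^4$ with orthonormal basis $\calB=\{X_i\}_{i=1}^4$, and let $\mu_1$ be given by $[X_1,X_2]=X_4$ and $\mu_2$ by $[X_1,X_3]=X_4$. Both lie in $\calR(\frakn,\calB)$ by Corollary \ref{orthogonal}, both satisfy the nilsoliton condition with $\beta_1=\beta_2=-3/2$, the supports $\{(1,2,4)\}$ and $\{(1,3,4)\}$ are disjoint, and $c_1=Y_{13}^4\,\bfRic^\calB_{\mu_1}=-1=Y_{12}^4\,\bfRic^\calB_{\mu_2}=c_2$; since $\beta_1=\beta_2$ and $c_1=c_2$, the matching condition holds under either reading of the paper's normalization (as stated, $-2\beta_1+c_2=-2\beta_2+c_1$, or the normalization consistent with Theorem \ref{nilsolitonconstraint}(4), $\beta_1+c_2=\beta_2+c_1$). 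Yet Equation \eqref{nilriccicurvature} gives $\ric_{\mu_1+\mu_2}(X_2,X_3)=-\half\la [X_2,X_1],[X_3,X_1]\ra=-\half\ne 0$, so $\calB$ is not a Ricci eigenvector basis for the sum and additivity fails; moreover $\frakn_{\mu_1+\mu_2}$ is isometrically isomorphic to $\frakh_1\oplus\boldR$ (rotate by $\pi/4$ in the $X_2X_3$-plane, getting $[X_1,(X_2+X_3)/\sqrt{2}\,]=\sqrt{2}\,X_4$), whose unique nilsoliton constant is $-3$, equal to neither $-2\beta_1+c_2=2$ nor $\beta_1+c_2=-5/2$. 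So the theorem as stated needs a supplementary hypothesis --- most simply that $\mu_1+\mu_2\in\calR(\frakn,\calB)$, or that no $\mu_1$-bracket and $\mu_2$-bracket share an output index or an input pair, which is what holds in the application to $Q_n$ in Theorem \ref{Qn} (there every $\mu_1$-bracket involves $X_0$ and lands outside $\la X_n\ra$, while every $\mu_2$-bracket avoids $X_0$ and lands in $\la X_n\ra$). With such a hypothesis added, all cross terms vanish identically and your argument closes the proof exactly as the paper intends.
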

We use this theorem in the proof of Theorem \ref{Qn} which asserts the
existence of nilsoliton metrics on each member of an infinite family
$Q_n$ of filiform metric nilpotent Lie algebras.  Combining the
existence results in Theorems \ref{Ln} and \ref{Qn}, we conclude in
Corollary \ref{naturallygraded} that every naturally graded filiform
nilpotent Lie algebra admits a nilsoliton metric.

\section{Preliminaries}
\subsection{The geometry of metric nilpotent Lie algebras}\label{geometry}
Let $(N,g)$ be a nilmanifold with associated metric Lie algebra
$(\frakn, Q).$ We will denote by $X$ both a vector in $\frakn$ and the
left-invariant vector field on $N$ that it induces.  The connection
for $(N,g)$ is given by
\begin{equation}\label{connection}
 \nabla_X Y = \half \left( \ad_X Y - \ad^\ast_X Y - \ad^\ast_Y X
\right),
\end{equation} 
and the sectional curvature $K$ of a plane spanned by orthonormal $X$
and $Y$ is
\begin{equation}\label{K} 
K(X \wedge Y) = \|\nabla_X Y\|^2 - \la \nabla_X X , \nabla_ Y Y \ra -
\la \ad_Y^2 X, X \ra - \|\ad_X Y\|^2 \end{equation} (\cite{wolter91a},
\cite{alekseevski75a}).  Let $\{X_i\}$ be an orthonormal basis of
$\frakn.$ We view the Ricci form for $(N,g)$ as in inner product on
$\frakn$ and the Ricci endomorphism as an linear map from $\frakn$ to
itself. The Ricci form $\ric$ for $(N, g)$ is given by the expression
for the nil-Ricci form in Equation \eqref{nilriccicurvature}, where we
denote the metric $g$ by $\la \cdot , \cdot \ra.$ This is a special
case of the the general formula for the Ricci form for a homogeneous
space (Corollary 7.38, \cite{besse-einsteinmanifolds}).

\subsection{Generalized Cartan matrices and their Dynkin diagrams}
The following background material is from Chapter 4 of \cite{kac90},
where it is used to study root systems of Kac-Moody algebras,
generalizing classical results for semisimple Lie algebras.

A real $n \times n$ matrix $A = (a_{ij})_{i,j=1}^n$ is called a {\em
generalized Cartan matrix} if
\begin{enumerate}
\item{$a_{ii}=2$ for $i=1, \ldots, n,$}
\item{$a_{ij}$ are nonpositive integers for $i \ne j,$ \text{and}}
\item{if $a_{ij} = 0,$ then $a_{ji}=0$}
\end{enumerate}
A matrix is {\em decomposable} if it is block diagonal after
conjugation by a permutation matrix, and is {\em indecomposable} if it
is not decomposable.  A vector $v$ will be called {\em positive} if $v
> 0.$

\begin{theorem} [\cite{kac90}]\label{kac4-3}
  Let $A$ be a real $n \times n$ indecomposable matrix so that
$a_{ij}$ is nonpositive if $i \ne j$ and $a_{ij}=0$ implies that
$a_{ji}=0.$ Then one and only one of the following three possibilities
holds for both $A$ and $A^T:$
\begin{enumerate}
\item[(Fin)]{$\det A \ne 0;$ there exists $u > 0$ such that $Au > 0;$
and if $Av \ge 0$ then $v > 0$ or $v = 0.$}
\item[(Aff)]{$\dim \ker A = 1;$ there exists $u > 0$ such that $Au =
0;$ and if $Av \ge 0$ then $Av = 0.$}
\item[(Ind)]{there exists $u > 0$ such that $Au < 0;$ and if $Av \ge
0$ and $v \ge 0,$ then $v=0.$ }
\end{enumerate}
\end{theorem}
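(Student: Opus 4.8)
The plan is to deduce the trichotomy from the Perron--Frobenius theorem applied to a nonnegative shift of $A$. Since the off-diagonal entries of $A$ are nonpositive, I would first choose a constant $c \ge \max_i a_{ii}$ and set $B = cI - A$, so that $B \ge 0$ entrywise. Because $a_{ij} = 0$ exactly when $a_{ji} = 0$, the directed graph of $B$ (with an edge $i \to j$ when $B_{ij} > 0$) is symmetric, and its strong connectivity is equivalent to the indecomposability of $A$; hence $B$ is an irreducible nonnegative matrix. Perron--Frobenius then furnishes a simple eigenvalue $\rho = \rho(B)$ equal to the spectral radius, with a strictly positive eigenvector $u > 0$, and every eigenvalue of $B$ has modulus at most $\rho$. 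Setting $\lambda_0 := c - \rho$, I get $Au = \lambda_0 u$ with $u > 0$, and a short check (shifting $c$ shifts every eigenvalue of $B$ by the same amount) shows $\lambda_0$ is independent of the choice of $c$. The entire classification will be keyed to the sign of $\lambda_0$.

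For the existence statements, the eigenvector $u > 0$ immediately gives $Au > 0$, $Au = 0$, or $Au < 0$ according as $\lambda_0 > 0$, $\lambda_0 = 0$, or $\lambda_0 < 0$. Since $B^T = cI - A^T$ has the same spectral radius $\rho$, the matrix $A^T$ is assigned the same $\lambda_0$ and hence the same type, which is exactly the assertion that the alternative holds for both $A$ and $A^T$. In the finite case ($\lambda_0 > 0$) I would note $\rho < c$, so $c$ exceeds the modulus of every eigenvalue of $B$ and $0$ cannot be an eigenvalue of $A = cI - B$, giving $\det A \ne 0$; moreover $A^{-1} = c^{-1}(I - B/c)^{-1} = c^{-1}\sum_{k \ge 0}(B/c)^k$ converges and is strictly positive by irreducibility, so $Av \ge 0$ forces $v = A^{-1}(Av)$ to be $0$ or $> 0$. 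In the affine case ($\lambda_0 = 0$) one has $\rho = c$, and simplicity of the Perron root gives $\dim \ker A = 1$.

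The remaining implications of the form ``$Av \ge 0 \Rightarrow \dots$'' for the affine and indefinite cases I would obtain by a duality pairing with the transpose. In the affine case, pick $w > 0$ with $A^T w = 0$; then for any $v$ we have $\langle w, Av \rangle = \langle A^T w, v \rangle = 0$, so if $Av \ge 0$ the strict positivity of $w$ forces $Av = 0$. In the indefinite case ($\lambda_0 < 0$), pick $w > 0$ with $A^T w < 0$; if $v \ge 0$ with $v \ne 0$ and $Av \ge 0$, then $\langle w, Av \rangle \ge 0$ while $\langle A^T w, v \rangle < 0$, a contradiction, so $v = 0$. Mutual exclusivity then follows because the three cases are separated by the sign of the single number $\lambda_0$, and because the property-packages are pairwise incompatible: finite type forces $\det A \ne 0$ whereas affine type forces $\det A = 0$, and the positive eigenvector $u$ of the finite and affine cases violates the indefinite-case conclusion.

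I expect the main obstacle to be precisely these two implications, since they are the steps that genuinely use positivity beyond the mere existence of an eigenvector: the finite case requires the strict positivity of $A^{-1}$ via the convergent Neumann series, and the affine and indefinite cases require the transpose-pairing argument together with the fact that $A^T$ inherits the same type. Everything else reduces to careful bookkeeping of Perron--Frobenius applied to the irreducible nonnegative matrix $B$.
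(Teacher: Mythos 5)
Your proof is correct, but note that the paper itself offers nothing to compare it against: Theorem \ref{kac4-3} is imported verbatim from Chapter 4 of \cite{kac90} as background material, with no proof reproduced. So the relevant comparison is with Kac's own argument. Kac proves the trichotomy by elementary convex-cone methods: a preliminary lemma showing that for such $A$, any $v \ge 0$ with $Av \ge 0$ is either $0$ or strictly positive, combined with a theorem of the alternative (linear-inequality duality) to produce the positive vector $u$ in each case. Your route instead shifts $A$ to an irreducible nonnegative matrix $B = cI - A$ and lets Perron--Frobenius do all the work: the sign of $\lambda_0 = c - \rho(B)$ cleanly indexes the three cases, simplicity of the Perron root gives $\dim\ker A = 1$ in the affine case, the entrywise-positive Neumann series for $A^{-1}$ gives the finite-case implication, and pairing against the positive Perron vector of $B^T$ gives the affine and indefinite implications --- with the bonus that the statement ``the same case holds for $A^T$'' falls out for free from $\rho(B^T) = \rho(B)$, whereas in Kac's treatment this requires a separate check. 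What your approach buys is brevity and a single organizing invariant $\lambda_0$; what it costs is self-containedness, since Perron--Frobenius (simplicity of the root, positivity of left and right eigenvectors, the positivity of $I + B + \cdots + B^{n-1}$ for irreducible $B$) is a heavier import than the linear-programming-type lemma Kac uses. Two trivia worth tightening: choose $c$ strictly larger than $\max_i a_{ii}$ (and positive) so that $B$ has positive diagonal, which sidesteps any quibble about irreducibility conventions in degenerate cases such as $n = 1$; and in the finite case you should remark explicitly that $\lambda_0 > 0$ forces $c > \rho(B) \ge 0$, so $c > 0$ and the series $c^{-1}\sum_{k \ge 0}(B/c)^k$ indeed converges --- you use this but do not say it.
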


These three cases are called {\em finite type, affine type} and {\em
indefinite type} respectively.  The Dynkin diagrams of all generalized
Cartan matrices of finite and affine type are classified.

\section{Curvatures for metric nilpotent algebras}\label{curvature}
\subsection{Ricci curvature}
We often want to represent the nil-Ricci endomorphism for a given
metric algebra $(\frakn,Q)$ with respect to a basis.  Corollary
\ref{orthogonal} allows us to check whether a basis is a Ricci
eigenvector basis.  Once we have a Ricci eigenvector basis, Theorem
\ref{riccitensor} will give the eigenvalues for each of the basis
vectors, thus determining the endomorphism.

For a metric algebra $(\frakn_\mu,Q),$ the linear map $J: \frakn \to
\End(\frakn)$ is   defined by $J_X Y=\ad_Y^\ast X;$ it encodes the
algebraic structure of $\frakn_\mu$ using the inner product $Q$.  The
inner product $\la \cdot , \cdot \ra$ on $\frakn$ induces an inner
product on the tensor algebra of $\frakn,$ which we also denote by 
$\la \cdot , \cdot \ra.$ In the next theorem, we use this inner
product to write the Ricci form for
 $(\frakn_\mu,Q)$  in terms of the adjoint and $J$
maps in $\End(\frakn).$

\begin{theorem}\label{niceformula}  
Suppose that $(\frakn_\mu,Q)$ is a nilpotent metric algebra. Then
  \[ \ric_\mu(X,Y) =  -\frac{1}{2} \la \ad_X ,  \ad_Y  \ra + \frac{1}{4} \la
  J_X , J_Y \ra \] for any $X, Y \in \frakn.$
\end{theorem}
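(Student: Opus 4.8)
The plan is to verify the identity by a direct expansion of the defining formula \eqref{nilriccicurvature}, matching its two sums term by term with $-\half \la \ad_X , \ad_Y \ra$ and $\frac{1}{4}\la J_X , J_Y \ra.$ The only external fact I would invoke is that the inner product that $Q$ induces on $\End(\frakn)$ is the Hilbert--Schmidt product, so that for any $A, B \in \End(\frakn)$ and any $Q$-orthonormal basis $\{X_i\}$ one has $\la A , B \ra = \sum_{i=1}^n \la A X_i , B X_i \ra.$ Throughout I would fix such a basis; the statement is basis-independent, and nothing below uses the Jacobi identity or nilpotence, so the hypothesis merely flags the case where $\ric_\mu$ is the genuine Ricci form.

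First I would treat the first sum. Writing $[X,X_i] = \ad_X X_i$ and $[Y,X_i] = \ad_Y X_i,$ the sum $-\half \sum_i \la [X,X_i], [Y,X_i] \ra$ becomes $-\half \sum_i \la \ad_X X_i , \ad_Y X_i \ra,$ which by the Hilbert--Schmidt formula above is exactly $-\half \la \ad_X , \ad_Y \ra.$

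The second sum requires unwinding the definition $J_X Y = \ad_Y^\ast X.$ Applying the Hilbert--Schmidt formula to $J_X, J_Y$ gives $\la J_X , J_Y \ra = \sum_i \la J_X X_i , J_Y X_i \ra = \sum_i \la \ad_{X_i}^\ast X , \ad_{X_i}^\ast Y \ra.$ I would then expand each factor in the orthonormal basis using the defining property of the adjoint, $\la \ad_{X_i}^\ast X , X_j \ra = \la X , \ad_{X_i} X_j \ra = \la X , [X_i,X_j] \ra,$ to obtain
\[ \la J_X , J_Y \ra = \sum_{i,j=1}^n \la X, [X_i,X_j] \ra \, \la Y, [X_i,X_j] \ra , \]
which is precisely four times the second sum in \eqref{nilriccicurvature}. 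Adding the two contributions yields the claimed formula.

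The computation is entirely routine; the only place to be careful is the bookkeeping around the map $J,$ whose definition places the adjoint on the second argument ($J_X Y = \ad_Y^\ast X$), so that the contraction defining $\la J_X , J_Y \ra$ runs over the subscript $X_i$ of $\ad_{X_i}^\ast$ rather than over $X$ or $Y.$ Matching the symmetric roles of $X$ and $Y$ and the index ranges in the two double sums is the one spot where an index slip could occur, but no genuine difficulty arises.
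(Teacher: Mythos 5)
Your proposal is correct and follows essentially the same route as the paper's proof: both arguments expand Equation \eqref{nilriccicurvature} over an orthonormal basis, identify the first sum with the Hilbert--Schmidt pairing $\la \ad_X, \ad_Y \ra$, and convert the second double sum into $\la J_X, J_Y \ra$ via $\la [X_i,X_j], X \ra = \la X_j, \ad_{X_i}^\ast X \ra = \la X_j, J_X X_i \ra$. The only cosmetic difference is direction --- the paper pushes the Ricci formula forward into the endomorphism inner products, while you expand $\la J_X, J_Y \ra$ backward to meet the second sum --- but the computation is identical in substance.
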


\begin{proof}
Let $\{X_i\}_{i=1}^n$ be an orthonormal basis for $(\frakn_\mu,Q).$
From Equation \eqref{nilriccicurvature} we get
\begin{align*} 
\ric_\mu&(X,Y) \\ &= -\frac{1}{2} \sum_{i=1}^n \la \ad_X X_i, \ad_Y
X_i \ra + \frac{1}{4} \sum_{i,j=1}^n \la \ad_{X_i} X_j, X \ra \la
\ad_{X_i} X_j, Y \ra \\ &= -\frac{1}{2}\sum_{i,j=1}^n \la \ad_X X_i,
X_j \ra \la \ad_Y X_i, X_j \ra \\ & \hskip 1in + \frac{1}{4}
\sum_{i,j=1}^n \la X_j,\ad_{X_i}^\ast X \ra \la X_j,\ad_{X_i}^\ast Y
\ra \\ &= -\frac{1}{2} \la \ad_X , \ad_Y \ra + \frac{1}{4}
\sum_{i,j=1}^n \la X_j,J_X X_i \ra \la X_j,J_Y X_i \ra \\ &=
-\frac{1}{2} \la \ad_X , \ad_Y \ra + \frac{1}{4} \la J_X , J_Y \ra
\end{align*}
 \end{proof}

This has a corollary that will be useful in concrete situations.
\begin{corollary}\label{orthogonal} Let
$(\frakn_\mu,Q)$ be an $n$-dimensional metric algebra.  Then an orthonormal
basis  $\{
X_i\}_{i=1}^n$ 
is a Ricci eigenvector basis if $\{J_{X_i}\}_{i=1}^n$
and $\{\ad_{X_i}\}_{i=1}^n$ are both orthogonal sets in
$\End(\frakn)$.
\end{corollary}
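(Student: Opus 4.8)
The plan is to use Theorem \ref{niceformula}, which expresses the Ricci form as $\ric_\mu(X,Y) = -\frac{1}{2}\la \ad_X, \ad_Y \ra + \frac{1}{4}\la J_X, J_Y \ra$, and to show that the stated orthogonality hypotheses force the matrix of $\Ric_\mu$ in the basis $\{X_i\}$ to be diagonal. Since $\{X_i\}$ is orthonormal, the $(i,k)$ entry of the matrix representing $\Ric_\mu$ is exactly $\la \Ric_\mu X_i, X_k \ra = \ric_\mu(X_i, X_k)$. A symmetric endomorphism is diagonalized by an orthonormal basis precisely when its off-diagonal matrix entries all vanish, so the whole task reduces to checking that $\ric_\mu(X_i, X_k) = 0$ whenever $i \ne k$.

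First I would apply Theorem \ref{niceformula} with $X = X_i$ and $Y = X_k$ for $i \ne k$, obtaining
\begin{equation*}
\ric_\mu(X_i, X_k) = -\tfrac{1}{2}\la \ad_{X_i}, \ad_{X_k} \ra + \tfrac{1}{4}\la J_{X_i}, J_{X_k} \ra.
\end{equation*}
The hypothesis that $\{\ad_{X_i}\}_{i=1}^n$ is an orthogonal set in $\End(\frakn)$ says precisely that $\la \ad_{X_i}, \ad_{X_k}\ra = 0$ for $i \ne k$, which kills the first term. Likewise, the hypothesis that $\{J_{X_i}\}_{i=1}^n$ is orthogonal gives $\la J_{X_i}, J_{X_k}\ra = 0$ for $i \ne k$, killing the second. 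Hence every off-diagonal entry $\ric_\mu(X_i, X_k)$ vanishes, so the matrix of $\Ric_\mu$ with respect to the orthonormal basis $\{X_i\}$ is diagonal, which is to say each $X_i$ is an eigenvector of $\Ric_\mu$ and $\{X_i\}$ is a Ricci eigenvector basis.

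One caveat I would check at the outset is that Theorem \ref{niceformula} as stated assumes $(\frakn_\mu, Q)$ is \emph{nilpotent}, whereas the corollary is phrased for a general metric algebra; I would either invoke the general nil-Ricci formula \eqref{nilriccicurvature} directly, redoing the short computation in the proof of Theorem \ref{niceformula} (which only uses orthonormality of the basis and the definitions of $\ad^\ast$ and $J$, not nilpotency), or restrict the statement accordingly. In either case the algebraic identity $\ric_\mu(X,Y) = -\frac{1}{2}\la \ad_X, \ad_Y\ra + \frac{1}{4}\la J_X, J_Y\ra$ is what I need, and it holds for any metric algebra. There is no genuine obstacle here: the argument is a one-line consequence of the bilinear structure of the Ricci form once the orthogonality hypotheses are plugged in. The only subtlety worth flagging explicitly is that the condition is stated as sufficient but not necessary — the two orthogonality conditions separately guarantee diagonality, but cancellation between the two terms could in principle produce a Ricci eigenvector basis without either set being orthogonal, which is why the corollary uses ``if'' rather than ``if and only if.''
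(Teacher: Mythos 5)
Your proof is correct and is exactly the argument the paper intends: the corollary is stated as an immediate consequence of Theorem \ref{niceformula}, obtained by evaluating the formula $\ric_\mu(X_i,X_k) = -\frac{1}{2}\la \ad_{X_i},\ad_{X_k}\ra + \frac{1}{4}\la J_{X_i},J_{X_k}\ra$ on distinct basis vectors so that both orthogonality hypotheses kill the off-diagonal entries, forcing $\Ric_\mu$ to be diagonal in the basis. Your caveat about nilpotency is well taken and correctly resolved --- the computation in the proof of Theorem \ref{niceformula} uses only the definition \eqref{nilriccicurvature} of the nil-Ricci form and orthonormality, not nilpotency, which is why the paper freely applies the corollary to general metric algebras.
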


If $\{J_{X_i}\}_{i=1}^n$
$\{\ad_{X_i}\}_{i=1}^n$ are both orthogonal sets in $\End(\frakn)$,
then $\calB$ is a Ricci eigenvector basis by the corollary above.
The orthogonality of these sets can sometimes be verified simply
by looking at how elements on $\Lambda(\frakn,\calB)$ overlap. 
For example, triples of form $(1,2,3)$ and $(1,2,4)$ in
  $\Lambda(\frakn,\calB)$
mean that $J_{X_3}$ and $J_{X_4}$ may not be orthogonal, and triples
$(1,3,4)$ and $(2,3,4)$ mean that  $\ad_{X_1}$ and
 $\ad_{X_2}$  may not be orthogonal.  The lack of such ``overlapping
triples'' will imply that the sets are orthogonal.  

Given an inner product space $(\frakn_\mu,Q)$ with orthonormal basis
$\calB$ we can use the corollary to describe membership in
$\calR(\frakn,\calB)$ with polynomial equations in the structure
constants; therefore, $\calR(\frakn,\calB)$ is an algebraic variety in
$\Lambda^2 \frakn^\ast \otimes \frakn.$

Let $\calB = \{X_i\}_{i=1}^n$ be a Ricci eigenvector basis for
 a metric
 algebra $(\frakn_\mu,Q).$ Next we show that the transpose of the
 Ricci vector is a weighted sum of the root vectors for
 $(\frakn_\mu,Q)$ with respect to $\calB,$ where the weight of a root
 vector is $-1/2$ times the square of the corresponding structure
 constant.  This will be illustrated in Example
 \ref{concreteexample1}. The formula was first given as Proposition
 4.1 of \cite{karidi93}, where R.\ Karidi gave a detailed analysis of
 the effects of individual structure constants on the curvature tensor
 of a metric nilpotent Lie algebra.
\begin{theorem}\label{riccitensor} Let 
$(\frakn_\mu,Q)$ be a metric algebra and let $\calB = \{X_i\}_{i=1}^n$
 be a Ricci eigenvector 
basis for $\frakn_\mu.$ Let $\{ Y_{ij}^k \, | \,
 (i,j,k) \in \Lambda(\frakn,\calB)\}$ be the set of root vectors for
 $(\frakn_\mu,Q)$ with respect to $\calB,$ and let $Y$ be the root
 matrix.  Then the eigenvalues $\kappa_{X_1}, \kappa_{X_2}, \ldots,
 \kappa_{X_n}$ of the nil-Ricci endomorphism are given by
\begin{align*}
 \bfRic^\calB_\mu &= (\kappa_{X_1}, \kappa_{X_2}, \ldots,
\kappa_{X_n})^T \\ &= -\half \sum_{(i,j,k) \in \Lambda}
(\alpha_{ij}^k)^2 (Y_{ij}^k)^T = -\frac{1}{2} Y^T [\alpha^2] .
\end{align*}
and $\rho = \trace \Ric_\mu$ is given by
$
\rho = -\half \sum_{(i,j,k) \in \Lambda} (\alpha_{ij}^k)^2 =
-\frac{1}{4} \| \mu \|^2 ,$ where $\mu \in \Lambda^2 \frakn^\ast
\otimes \frakn$ is the Lie bracket for $\frakn.$
\end{theorem}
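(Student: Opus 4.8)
The plan is to compute the diagonal entries of the nil-Ricci endomorphism directly and show that they assemble into the claimed weighted sum of root vectors. Since $\calB$ is a Ricci eigenvector basis, the $\ell$th eigenvalue is simply $\kappa_{X_\ell} = \ric_\mu(X_\ell, X_\ell)$, so I would expand this quantity using the defining formula \eqref{nilriccicurvature} with $X = Y = X_\ell$, rewriting everything in terms of the structure constants $\alpha_{ij}^k = \la [X_i, X_j], X_k \ra$.

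The two terms of \eqref{nilriccicurvature} should then be matched separately to the two sign patterns in a root vector $Y_{ij}^k = \varepsilon_i^T + \varepsilon_j^T - \varepsilon_k^T$. In the first (negative) term, expanding $\|[X_\ell, X_i]\|^2 = \sum_k (\alpha_{\ell i}^k)^2$ and summing over $i \ne \ell$ contributes $-\frac{1}{2}(\alpha_{ij}^k)^2$ for each triple of $\Lambda$ having $\ell$ as one of its two input indices; this records the $+1$ in position $\ell$ of $Y_{ij}^k$. In the second (positive) term, the sum $\frac{1}{4}\sum_{i,j}(\alpha_{ij}^\ell)^2$ collapses by skew-symmetry to $\frac{1}{2}\sum_{i<j}(\alpha_{ij}^\ell)^2$, contributing $-\frac{1}{2}(\alpha_{ij}^\ell)^2 \cdot (-1)$ for each triple whose output index is $\ell$; this records the $-1$ in position $k = \ell$. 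Collecting both contributions shows that $\kappa_{X_\ell}$ equals $-\frac{1}{2}$ times the sum over $\Lambda$ of $(\alpha_{ij}^k)^2$ multiplied by the $\ell$th entry of $Y_{ij}^k$, which is exactly the $\ell$th component of $-\frac{1}{2}\sum_\Lambda (\alpha_{ij}^k)^2 (Y_{ij}^k)^T = -\frac{1}{2} Y^T [\alpha^2]$.

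For the trace I would sum the eigenvalues over $\ell$ and interchange the order of summation; the resulting inner sum of the entries of $Y_{ij}^k = \varepsilon_i^T + \varepsilon_j^T - \varepsilon_k^T$ is $1+1-1 = 1$, so $\rho = -\frac{1}{2}\sum_\Lambda (\alpha_{ij}^k)^2$ drops out immediately. To pass to $-\frac{1}{4}\|\mu\|^2$, I would use that the induced tensor-algebra inner product gives $\|\mu\|^2 = \sum_{i,j,k}(\alpha_{ij}^k)^2$ summed over all ordered pairs $(i,j)$, which by the skew-symmetry $\alpha_{ij}^k = -\alpha_{ji}^k$ equals $2\sum_{(i,j,k)\in\Lambda}(\alpha_{ij}^k)^2$.

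The main obstacle is purely bookkeeping: tracking the interplay between the restriction $i<j$ that defines $\Lambda$ and the unrestricted index sums in \eqref{nilriccicurvature}, together with the attendant factors of two. One should also confirm that the identity survives the degenerate triples of $\Upsilon_n$ in which the output index $k$ coincides with an input index; there the $+1$ and $-1$ contributions to that entry of $Y_{ij}^k$ cancel, and correspondingly the relevant terms of \eqref{nilriccicurvature} cancel, so the formula requires no separate treatment.
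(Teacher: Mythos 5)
Your proposal is correct and is essentially the computation in the paper: both expand the defining formula for $\ric_\mu$ in structure constants, use skew-symmetry to handle the factor-of-two bookkeeping, and match the negative and positive terms to the $+1$ and $-1$ entries of the root vectors (the paper organizes this as a single vector identity $\sum_k \ric_\mu(X_k,X_k)\varepsilon_k^T = -\frac{1}{4}\sum_{i,j,k}(\alpha_{ij}^k)^2 Y_{ij}^k$ via reindexing, whereas you verify it entry by entry, but the manipulation is the same). Your observations about the degenerate triples with $k \in \{i,j\}$ and about $\|\mu\|^2 = 2\sum_\Lambda (\alpha_{ij}^k)^2$ are also correct and fill in details the paper leaves implicit.
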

When $(\frakn_\mu,Q)$ is a metric nilpotent Lie algebra, the number
$\rho$ is the scalar curvature for corresponding nilmanifold.

\begin{proof} 
We use Equation \eqref{nilriccicurvature} to express the eigenvalue
vector for the Ricci endomorphism.
\begin{align*} 
\sum_{k=1}^n \ric_\mu&(X_k,X_k)\varepsilon_k^T \\ &= -\frac{1}{2}
\sum_{i,k=1}^n \| [X_k, X_i] \|^2 \varepsilon_k^T + \frac{1}{4}
\sum_{i,j,k=1}^n \la [X_i,X_j], X_k \ra^2 \varepsilon_k^T \\ &=
-\frac{1}{2} \sum_{i,j,k=1}^n \la [X_k, X_i], X_j \ra^2
\varepsilon_k^T + \frac{1}{4} \sum_{i,j,k=1}^n \la [X_i,X_j], X_k
\ra^2 \varepsilon_k^T \\ \intertext{Breaking the first summand into
two pieces and reindexing the first two pieces, we see this equals} &=
-\frac{1}{4} \sum_{i,j,k=1}^n \la [X_i, X_j], X_k \ra^2
\varepsilon_i^T -\frac{1}{4} \sum_{i,j,k=1}^n \la [X_i, X_j], X_k
\ra^2 \varepsilon_j^T \\ & \qquad + \frac{1}{4} \sum_{i,j,k=1}^n \la
[X_i,X_j], X_k \ra^2 \varepsilon_k^T \\ &= -\frac{1}{4}
\sum_{i,j,k=1}^n (\alpha_{ij}^k)^2 \, Y_{ij}^k \\ &= -\frac{1}{2}
\sum_{1 \le i < j \le n} \sum_{k =1}^n (\alpha_{ij}^k)^2 \, Y_{ij}^k
\intertext{which when summed over nonzero terms becomes} &=
-\frac{1}{2} \sum_{(i,j,k) \in \Lambda} (\alpha_{ij}^k)^2 \, Y_{ij}^k
\end{align*}
Taking the transpose of both sides we get
\begin{equation*}
 \bfRic^\calB_\mu = -\frac{1}{2} \sum_{(i,j,k) \in \Lambda}
(\alpha_{ij}^k)^2 (Y_{ij}^k)^T, \end{equation*} completing the proof.

 \end{proof}

\subsection{Sectional curvature}
A Lie algebra $\frakg$ is said to be ${\boldR}^+$-{\em graded} if it
can be written as $\frakg = \frakg_{\lambda_1} \oplus \cdots \oplus
\frakg_{\lambda_r}$ where $[\frakg_{\lambda_i},\frakg_{\lambda_j}]
\subset \frakg_{\lambda_{i} + \lambda_j}$ ($\frakg_{\lambda_k} := 0$
if $\lambda_k > \lambda_r$) and $\lambda_1, \lambda_2, \ldots,
\lambda_r$ are in $\boldR^+.$ An $\boldR^+$-graded algebra is
nilpotent.  For a nilmanifold, tangent two-planes spanned by
two vectors in  subspace summands of
 a ${\boldR}^{+}$-grading, the sectional curvature has
a particularly nice expression.
\begin{theorem}\label{Ricci-K}
  Let $(\frakn,Q)$ be a metric Lie algebra.  Suppose that $\frakn$ has
a orthogonal ${\boldR}^{+}$-grading $\frakn = \oplus
\frakn_{\lambda}.$ For unit $X$ in $\frakn_{\mu}$ and unit $Y$ in
$\frakn_{\nu}$ orthogonal to $X,$ if $\mu \le \nu,$ the sectional
curvature of the plane spanned by $X$ and $Y$ is given by
\[  K(X \wedge Y) = -\smallfrac{3}{4} \|  \ad_X Y \|^2 
+\smallfrac{1}{4} \|J_Y X \| ^2 . \] 
If in addition $X= X_i,$ $Y=Y_j$ for
an orthonormal basis $\{X_i\}_{i=1}^n,$ then
\[ K(X_i \wedge X_j) = 
-\frac{3}{4} \sum_{(i,j,k) \in \Lambda(\frakn,\calB)}
(\alpha_{ij}^k)^2 + \frac{1}{4} \sum_{(i,k,j) \in
  \Lambda(\frakn,\calB)} (\alpha_{ij}^k)^2. \]
\end{theorem}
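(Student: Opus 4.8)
The plan is to substitute the connection formula \eqref{connection} into the sectional curvature formula \eqref{K} and then use the orthogonal $\boldR^+$-grading to annihilate all but two of the resulting terms. The structural fact I would record first is how the adjoint maps interact with the grading. Since $X \in \frakn_\mu$ makes $\ad_X$ raise degree by $\mu$, the defining identity $\la \ad_X^\ast W, Z \ra = \la W, \ad_X Z \ra$ shows that $\ad_X^\ast$ \emph{lowers} degree by $\mu$; that is, $\ad_X^\ast(\frakn_\lambda) \subseteq \frakn_{\lambda - \mu}$, with the convention $\frakn_\lambda = 0$ unless $\lambda$ is a positive grade. This one observation drives every cancellation below.

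With this in hand I would evaluate the four terms of \eqref{K}. First, $\nabla_X X = -\ad_X^\ast X = -J_X X \in \frakn_0 = 0$, and similarly $\nabla_Y Y = 0$, so the cross term $\la \nabla_X X, \nabla_Y Y \ra$ vanishes. Second, $\ad_Y^\ast X \in \frakn_{\mu - \nu}$, and since $\mu \le \nu$ the degree $\mu - \nu$ is not positive, whence $\ad_Y^\ast X = 0$ and the connection formula collapses to $\nabla_X Y = \half(\ad_X Y - \ad_X^\ast Y)$. Here $\ad_X Y \in \frakn_{\mu + \nu}$ while $\ad_X^\ast Y \in \frakn_{\nu - \mu}$, and because $\mu > 0$ these lie in distinct graded summands and are orthogonal, so $\|\nabla_X Y\|^2 = \frac14 \|\ad_X Y\|^2 + \frac14 \|\ad_X^\ast Y\|^2$. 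Third, $\ad_Y^2 X \in \frakn_{\mu + 2\nu}$ is orthogonal to $X \in \frakn_\mu$, giving $\la \ad_Y^2 X, X \ra = 0$. Substituting these evaluations into \eqref{K} and using $\ad_X^\ast Y = J_Y X$ yields $K(X \wedge Y) = \frac14 \|\ad_X Y\|^2 + \frac14 \|J_Y X\|^2 - \|\ad_X Y\|^2 = -\frac34 \|\ad_X Y\|^2 + \frac14 \|J_Y X\|^2$, which is the first formula.

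For the basis version I would set $X = X_i$, $Y = X_j$ and expand the two norms against the orthonormal basis. Writing $\|\ad_{X_i} X_j\|^2 = \sum_k \la [X_i, X_j], X_k \ra^2 = \sum_k (\alpha_{ij}^k)^2$ and restricting to nonzero structure constants recovers the first sum over $(i,j,k) \in \Lambda(\frakn,\calB)$. For the second norm I would use $\|J_{X_j} X_i\|^2 = \|\ad_{X_i}^\ast X_j\|^2$ together with $\la \ad_{X_i}^\ast X_j, X_k \ra = \la X_j, [X_i, X_k] \ra = \alpha_{ik}^j$, so that $\|\ad_{X_i}^\ast X_j\|^2 = \sum_k (\alpha_{ik}^j)^2$; restricting to nonzero terms indexed by $\Lambda$ gives the second sum.

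The main obstacle I anticipate is entirely in the bookkeeping of degrees for the dual maps: the whole computation hinges on the degree-lowering property of $\ad^\ast$ and on the hypothesis $\mu \le \nu$, which is precisely what forces $\ad_Y^\ast X$ to vanish and separates $\ad_X Y$ from $\ad_X^\ast Y$ into orthogonal graded pieces. Some care is also needed in the final step to reconcile the summation against the convention that triples in $\Lambda(\frakn,\calB)$ have their first two entries ordered, so that the skew-symmetry $\alpha_{ik}^j = -\alpha_{ki}^j$ is correctly handled when passing from the sum over all $k$ to the sum over $(i,k,j) \in \Lambda$.
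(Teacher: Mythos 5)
Your proof is correct and takes essentially the same approach as the paper's: both substitute the connection formula \eqref{connection} into \eqref{K} and use degree bookkeeping for $\ad$ and $\ad^\ast$ (equivalently $J$) in the $\boldR^+$-grading to show $\nabla_X X = \nabla_Y Y = 0$, $\ad_Y^\ast X = 0$, and $\ad_X Y \perp \ad_X^\ast Y$, then expand against the orthonormal basis for the second formula. Your explicit handling of the term $\la \ad_Y^2 X, X \ra$ and of the ordering convention in $\Lambda(\frakn,\calB)$ merely spells out steps the paper subsumes under ``simplifying.''
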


  The positive definite derivation $\Ric - \beta\Id$ for a metric
nilpotent algebra satisfying the nilsoliton condition with nilsoliton
constant $\beta < 0$ defines an $\boldR^+$ grading, so we can use the
previous theorem to find sectional curvatures for nilsolitons.

\begin{proof} 
Suppose that $X$ and $Y$ are orthogonal unit vectors in $\frakg_{\mu}$
 and $\frakg_{\nu}$ respectively, and $\mu \le \nu.$ Since $\oplus
 \frakg_\lambda$ is a grading, $\ad_X Y$ is in $\frakg_{\mu + \nu}.$
 We claim that for any $W$ in any $\frakg_\lambda,$ the vector $J_{Y}X$
 is in $\frakg_{\nu - \mu}.$ The inner product $ \la Y, \ad_X W \ra
 = \la J_Y X, W\ra $ is nonzero only if $\nu = \mu + \lambda,$ by the
 orthogonality of the grading. This shows that $J_{Y} X$ is in
 $\frakg_{\nu - \mu}.$ The same argument shows that $J_X Y = 0$ since
 $\nu - \mu \le 0$ makes $\frakg_{\mu - \nu}$ trivial.

Now we use Equation \eqref{connection} to compute
\begin{align*}
\| \nabla_ X Y \|^2 &= \smallfrac{1}{4} \la \ad_X Y - J_Y X , \ad_X Y
- J_Y X \ra \\ &= \smallfrac{1}{4} \left( \|\ad_X Y \|^2 - 2 \la \ad_X
Y, J_Y X \ra + \| J_Y X\|^2\right) \\ &= \smallfrac{1}{4} \left(
\|\ad_X Y \|^2 + \| J_Y X\|^2\right)
\end{align*}
The term $\la \ad_X Y, J_Y X \ra$ vanishes since $\ad_X Y$ is in
$\frakg_{\mu + \nu}$ and $ J_Y $ is in $\frakg_{\nu - \mu},$ and these
orthogonal spaces can not coincide unless $\mu = 0.$ Equation
\eqref{connection} also shows that $\nabla_W W = 0$ for any $W$ in any
$\frakg_\lambda$ because $\ad_X^\ast X = J_X X$ is in $\frakg_0 = \{ 0 \}.$

Substituting the expressions for $\nabla_X X, \nabla_Y Y$ and $\|
\nabla_X Y\|^2$ into Equation \eqref{K} and simplifying, we get
\begin{align*} K(X \wedge Y) &= \smallfrac{1}{4} ( \|\ad_X Y \|^2  + 
\| J_Y X\|^2) - \|\ad_X Y \|^2 \\ &= -\smallfrac{3}{4} \|\ad_X Y \|^2 +
\smallfrac{1}{4}\| J_Y X\|^2 \end{align*} as desired.  The equation
for $K(X_i \wedge X_j)$ follows from writing the equation for $K(X
\wedge Y)$ relative to an orthonormal basis and setting terms not in
$\Lambda(\frakn_\mu, Q)$ equal to zero.   
 \end{proof}

\subsection{Examples}
We illustrate the preceding theorems with an example.
\begin{example}\label{concreteexample1}
Let $(\frakn_\mu, Q)$ be the metric nilpotent Lie algebra with
orthonormal basis $\calB = \{X_1, X_2, X_3, X_4, X_5 \}$ and bracket
relations
\[ [X_1,X_2] = a X_3,  [X_1,X_3] = bX_4, [X_1,X_4]=cX_5,
 [X_2,X_3] = dX_5 ,\] where $\mu$ is a function of $a,b,c,d \in
\boldR \setminus \{ 0 \}.$ Here we have
\[\Lambda(\frakn,\calB) = \{ (1,2,3), (1,3,4), (1,4,5), (2,3,5)\}.\] 
Notice that from the way triples in
$\Lambda(\frakn,\calB)$ overlap,  the sets
$\{\ad_{X_i}\}_{i=1}^5$ and $\{J_{X_i}\}_{i=1}^5$ are both orthogonal
in $\End(\frakn)$ 
 for all $a,b,c,d,$ and then by Corollary \ref{orthogonal},
$\{X_i\}_{i=1}^5$ is a Ricci eigenvector basis.  The root matrix for
$(\frakn_\mu, Q)$ with respect to $\calB$ is $\D{Y = \begin{bmatrix} 1
& 1 & -1 & 0 & 0 \\ 1 & 0 & 1 & -1 & 0 \\ 1 & 0 & 0 & 1 & -1 \\ 0 & 1
& 1 & 0 & -1 \\
\end{bmatrix}}$ and the structure vector is 
$[\alpha^2] = (a^2, b^2, c^2, d^2)^T.$ By Theorem \ref{riccitensor},
the Ricci vector $\bfRic^\calB_\mu$ is given by
\begin{multline*}
  -\half ( a^2 \, Y_{12}^3+ b^2 \, Y_{13}^4 + c^2 \, Y_{14}^5+ d^2 \,
Y_{23}^5 )^T = \\ -\half
(a^2+b^2+c^2,a^2+d^2,-a^2+b^2+d^2,-b^2+c^2,-c^2-d^2)^T,
\end{multline*}
and hence the eigenvalues for the Ricci endomorphism are
\[  -\half (a^2+b^2+c^2),\half (a^2 + d^2),
\half (a^2-b^2-d^2), \half (b^2-c^2),\half (c^2 + d^2) \] Thus, with
respect to the basis $\calB,$ the Ricci endomorphism is represented by
the diagonal matrix $\diag(\bfRic^\calB_\mu).$

Notice that $\frakn$ has a grading defined by $ \frakn = \oplus_{i =
1}^5 \frakn_i$ with $\frakn_i = \la X_i \ra$ for $i = 1, \ldots, 5.$
As an illustration of Theorem \ref{Ricci-K}, we note that the
sectional curvature of the plane spanned by $X_1$ and $X_4$ is
$\frac{1}{4}(b^2 - 3c^2).$

The Gram matrix for $\Lambda(\frakn,\calB)$ is
\[ U = YY^T = \begin{bmatrix} 3 & 0 & 1 & 0 \\  0 & 3 & 0 & 1
\\ 1 & 0 & 3 & 1 \\ 0 & 1 & 1 & 3 \\ \end{bmatrix},\] and an
associated generalized Cartan matrix is
\[ A= U - [1]_{4 \times 4} = 
\begin{bmatrix} 2 & -1 & 0 & -1 \\  -1 & 2 & -1 & 0
\\ 0 & -1 & 2 & 0 \\ -1 & 0 & 0 & 2 \\ \end{bmatrix}.\] It may be
checked that the graphs $S(U)$ and $S(A)$ determined by $U$ and $A$
respectively both have the same incidences as type $A_4.$ Note that
because zero and nonzero off-diagonal entries are reversed in $U$ and
$A$ that one graph can be easily obtained from the other.
\end{example}

In \cite{lauret02}, using a variational approach, J.\ Lauret gave the
first presentation of nonsymmetric Einstein solvmanifolds of
arbitrarily high step, as extensions of nilsoliton Lie algebras of
arbitrarily high step.  In the next example, we use Theorem
\ref{riccitensor} to compute the Ricci curvature in this family.  We
will revisit this example later.

\begin{example}\label{Ln example, part one}
Let $\frakn_\mu$ be an $n$-dimensional metric nilpotent Lie algebra
 with a codimension one abelian ideal $\frakm.$ Let $X$ be a vector
 not in $\frakm.$ The mapping $\ad_X |_\frakm$ is nilpotent, so there
 exists a Jordan decomposition of $\frakm$ as the direct sum of $r$
 subspaces $\frakm_1, \ldots, \frakm_r$ of dimensions $n_1 + 1, n_2 +
 1, \ldots, n_r + 1$ respectively, such that for each $i = 1, \ldots,
 r,$ there is a basis $\{X_{ij} \, | \, 1 \le j \le n_i + 1\}$ for
 $\frakm_i$ so that $\ad_X$ is given by $[X, X_{ij}] = a_{ij} X_{i(j +
 1)}$ ($X_{i(n_1 + 2)} := 0$).

In terms of the lexicographically ordered basis $\{ X_{ij} \},$ we may
write $\ad_X|_\frakm$ as
\[ \ad_X|_\frakm = \begin{bmatrix} J_{n_1} & & \\  & \ddots & \\ 
& & J_{n_r}\end{bmatrix}, \qquad \text{\rm where} \qquad J_{n_i} =
\begin{bmatrix} 0 & & & \\ a_{i1} & \ddots & & \\ & \ddots & \ddots &
\\ & & a_{in_i} & 0 \end{bmatrix} . \] We impose the inner product $Q$
on $\frakn_\mu$ that has $\calB = \{X\} \cup \{X_{ij} | \, 1 \le i \le
r, 1 \le j \le n_i + 1\}$ as an orthonormal basis.  The basis $\calB$
is a Ricci eigenvector basis by Corollary \ref{orthogonal}, because
$\{ \ad_{X_{ij}}\}$ and $\{J_{X_{ij}}\}$ are both orthogonal sets.

Observe that $J_X \equiv 0,$ so by Theorem \ref{niceformula},
\[ \ric_\mu(X,X) =  -\frac{1}{2} \|\ad_X \|^2 
= -\frac{1}{2} \sum_{1 \le i \le r} \, \sum_{1 \le j \le n_i + 1 }
a_{ij}^2. \] Next we use Theorem \ref{riccitensor} to find the
restriction of the Ricci endomorphism to $\frakm_i$ for each $i.$
Using that $X_{i(n_i+2)}:=0,$ we get that the projection of the Ricci
vector $\bfRic^\calB_\mu$ to $\frakm_i$ is
\[ \frac{1}{2}(-a_{i1}^2, a_{i1}^2-a_{i2}^2, \ldots,  a_{i(n_i - 1)}^2 -
a_{in_i}^2, a_{in_i}^2),\] and hence
\begin{align*} 
\ric_\mu(X_{i1},X_{i1}) &= -\half a_{i1}^2, \\ \ric_\mu(X_{ij},X_{ij})
&= \half (a_{i(j-1)}^2 - a_{ij}^2) \text{\rm \quad for $j = 2$ to
$n_i$} \\ \ric_\mu(X_{in_{i+1}},X_{in_{i+1}}) &= \half a_{in_i}^2 .
\end{align*}

The Gram matrix $U$ and generalized Dynkin diagram $S(U)$ for
 $(\frakn_\mu,Q)$ relative to $\calB$ are not particularly nice;
 however the associated matrix $A = U - \onematrix{n}{n}$ is of type
 $A_{n_1 - 1} + \cdots + A_{n_r - 1}.$
\end{example}

\section{Properties of the Gram matrix $U$ and associated matrix $A$}
\label{properties}

In the next proposition, we summarize important properties of the Gram
matrix.
\begin{theorem}\label{gramproperties} Let $(\frakn, Q)$ be an $n$-dimensional 
inner product space with Ricci eigenvector basis $\calB.$ Let $\mu$ be
a $\Lambda^2 \frakn^\ast \otimes \frakn$ so that
$\Lambda(\frakn_\mu,\calB) \subset \Psi_n.$ Let $U=(u_{ij})$ be the
Gram matrix for $(\frakn_\mu, Q)$ with respect to $\calB.$ Then
\begin{enumerate}
\item{The diagonal entries of $U$ are all three}
\item{The off-diagonal entries of $U$ are in the set $\{-2,-1, 0, 1, 2
\}.$}
\item{$U$ is symmetric}
\item{The rank of $U$ is equal to the number of independent vectors in
the set $\{Y_{ij}^k \, | \, (i,j,k) \in \Lambda(\frakn,\calB) \}$ of
root vectors for $(\frakn_\mu,Q)$ with respect to $\calB.$ }
\item{$U$ is positive semidefinite.}
\end{enumerate}
Furthermore, the associated matrix $A$ defined by $A = 2U -
4\onematrix{m}{m},$ where $m = \| \Lambda\|,$ is a generalized Cartan
matrix.  If $U$ has no entries of $2,$ then $A^\prime = U -
\onematrix{m}{m}$ is a generalized Cartan matrix.  Also, $U$ is
positive definite if either $A$ or $A^\prime$ is positive definite
\end{theorem}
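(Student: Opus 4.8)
The plan is to reduce every assertion to elementary facts about the root vectors, the crucial input being the hypothesis $\Lambda(\frakn_\mu,\calB) \subset \Psi_n$. This guarantees that for each triple $(i,j,k)$ the three indices $i,j,k$ are distinct, so each root vector $Y_{ij}^k = \varepsilon_i^T + \varepsilon_j^T - \varepsilon_k^T$ is a $\{0,\pm 1\}$-vector with support of size exactly three and squared norm exactly $3$. (Were $k$ allowed to equal $i$ or $j$, as in $\Upsilon_n \setminus \Psi_n$, the root vector would collapse to a single standard basis vector of norm one and Property (1) would fail; so this hypothesis is genuinely used.) With this in hand, Properties (1)--(5) and the symmetry of $U$ all follow by direct inspection of $U = Y Y^T$, and the generalized Cartan matrix and definiteness claims follow by bookkeeping.

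First I would dispose of the structural properties. Symmetry (3) is immediate from $U^T = (YY^T)^T = YY^T = U$, and positive semidefiniteness (5) from $x^T U x = \|Y^T x\|^2 \ge 0$ for all $x$. The diagonal entry $u_{pp} = \|Y_{ij}^k\|^2 = 3$ gives (1). For the rank statement (4) I would invoke the standard identity $\operatorname{rank}(YY^T) = \operatorname{rank}(Y)$ over $\boldR$, which holds because $Y^T x = 0 \iff YY^T x = 0$ (apply $x^T$ to the right-hand equation to get $\|Y^T x\|^2 = 0$); since $\operatorname{rank}(Y)$ is the number of linearly independent rows, this is exactly the number of independent root vectors.

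The one place requiring a genuine argument is the off-diagonal bound (2), and this is the main obstacle. For $p \ne q$ the entry $u_{pq} = \la Y_p, Y_q \ra$ is an integer, since root vectors have integer entries, and Cauchy--Schwarz gives $|u_{pq}| \le \|Y_p\|\,\|Y_q\| = 3$. The subtlety is ruling out the extreme values, not merely bounding by $3$: equality would force $Y_p = \pm Y_q$, but $Y_p = Y_q$ is impossible for $p \ne q$ since distinct triples in $\Psi_n$ give distinct root vectors (one recovers $\{i,j\}$ from the two $+1$ positions and $k$ from the single $-1$ position), and $Y_p = -Y_q$ is impossible because $-Y_{ij}^k$ has two entries equal to $-1$ and so is never a root vector. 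Hence $|u_{pq}| \le 2$, establishing (2).

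Finally I would verify the assertions about $A$ and $A'$ and deduce definiteness. That $A = 2U - 4\onematrix{m}{m}$ is a generalized Cartan matrix is a line-by-line check of the three defining conditions: $a_{pp} = 2\cdot 3 - 4 = 2$; for $p \ne q$, $a_{pq} = 2u_{pq} - 4 \in \{-8,-6,-4,-2,0\}$ is a nonpositive integer by (2); and $a_{pq} = 0 \iff a_{qp} = 0$ follows from the symmetry (3). The identical check applies to $A' = U - \onematrix{m}{m}$ once $U$ has no entry equal to $2$: then $a'_{pq} = u_{pq} - 1 \in \{-3,-2,-1,0\}$ stays nonpositive, which is precisely why the hypothesis ``no entries of $2$'' is required (an entry $u_{pq}=2$ would give $a'_{pq}=1>0$). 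For the definiteness claim I would write $U = \half A + 2\onematrix{m}{m}$ and $U = A' + \onematrix{m}{m}$, and observe that $\onematrix{m}{m} = \boldone\,\boldone^T$ is positive semidefinite of rank one. Since adding a positive semidefinite matrix to a positive definite one preserves positive definiteness, positive definiteness of $A$ (hence of $\half A$), or of $A'$, forces positive definiteness of $U$.
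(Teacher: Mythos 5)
Your proof is correct and follows essentially the same plan as the paper's: use $\Lambda \subset \Psi_n$ to see that each root vector has two entries of $+1$ and one entry of $-1$, deduce (1)--(2) from the possible inner products of distinct root vectors, get (3)--(5) from the structure $U = YY^T$, and verify the Cartan-matrix axioms and the definiteness claim by direct bookkeeping with the identity $U = \frac{1}{2}A + 2\,\boldone\boldone^T$ (respectively $U = A' + \boldone\boldone^T$). The only differences are cosmetic: you use Cauchy--Schwarz plus integrality where the paper enumerates the overlap patterns of supports, and you prove the rank and positive-semidefiniteness facts about $YY^T$ directly where the paper cites Horn--Johnson.
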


\begin{proof}
By skew-symmetry and the hypothesis that $\Lambda \subset \Psi_n$,
 $\alpha_{ij}^k = 0$ if $i, j,$ and $k$ are not all distinct, so if
 $(i,j,k) \in \Lambda,$ the structure vector $Y_{ij}^k$ is represented
 by a vector with two entries of one and one entry of $-1,$ with
 zeroes comprising the remaining entries.  The $(i,j)$ entry of $U$ is
 the inner product of the $i$th structure vector $Y_i$ and the $j$th
 structure vector $Y_j.$ After considering all the possibilities for
 the arrangements of the nonzero entries of $Y_i$ and $Y_j,$ one sees
 that the inner product $\la Y_i, Y_j \ra$ must be in the set $\{ -2,
 -1, 0, 1, 2, 3\},$ with value three if and only if $Y_i = Y_j.$ As
 the root vectors are all distinct, this occurs if and only if $i=j.$
 Thus, properties (1) and (2) hold.  The definition of $U$ as $YY^T$
 insures that properties (3), (4) and (5) hold (\cite{hornjohnson}, Theorem
 7.2.10).

Properties (1)-(3) of generalized Cartan matrices hold for $A$ from the
definition of associated matrix, the symmetry of $U,$ and the fact
that $\onematrix{m}{m}$ is a symmetric matrix.  For any $n \times 1$
vector $v,$ $v A v^T = 2 v U v^T - 4 v v^T$ and $v A^\prime v^T = v U
v^T - vv^T$ so if $v A v^T > 0$ or $v A^\prime v^T > 0,$ then $v U v^T
> 0$.    \end{proof}

\section{The Jacobi condition}\label{jacobi}

The next theorem gives a simple way to check if a set of nonzero
structure constants indexed by a subset of the set $\Theta_n$ define a
Lie algebra.  We will use the theorem later to check whether a metric
algebra $\frakn_\mu$ satisfies the Jacobi condition when it is
represented with respect to a triangular orthonormal basis.
\begin{theorem}\label{jacobicondition} Let $(\frakn,Q)$ 
be an $n$-dimensional inner product space, and let $\mu$ be an element
of $\Lambda^2 \frakn^\ast \otimes \frakn.$ Let $\calB =
\{X_i\}_{i=1}^n$ be an orthonormal basis for $(\frakn,Q)$ and suppose
that $\mu$ is $\calB$-triangular.  Then
\begin{enumerate}
\item{If $i,j$ and $k$ are distinct, the product $\alpha_{ij}^l
 \alpha_{lk}^m \ne 0$ if and only if $\la Y_{ij}^l, Y_{lk}^m \ra =
 -1.$}
\item{The algebra $\frakn_\mu$ defined by $\mu$ is a Lie algebra if
and only if whenever there exists $m$ so that $ \la Y_{ij}^l, Y_{lk}^m
\ra = -1$ for triples $(i,j,l)$ and $(l,k,m)$ or $(k,l,m)$ in
$\Lambda(\frakn_\mu,\calB),$ the equation
\begin{equation}\label{jacobi-constraint} 
\sum_{s < m} \alpha_{ij}^s \alpha_{sk}^m + \alpha_{jk}^s \alpha_{si}^m
+ \alpha_{ki}^s \alpha_{sj}^m = 0
\end{equation} holds.}
\end{enumerate}
\end{theorem}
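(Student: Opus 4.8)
The plan is to handle the two parts in order, obtaining part (2) from the coordinate form of the Jacobi identity and using part (1) to decide which of its components can be nonzero. Throughout I use that $\calB$-triangularity means $\Lambda := \Lambda(\frakn_\mu,\calB) \subset \Theta_n$, so every root vector of $\Lambda$ has its two source indices strictly below its target index.

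For part (1), I would write the root vectors explicitly as $Y_{ij}^l = \varepsilon_i^T + \varepsilon_j^T - \varepsilon_l^T$ and $Y_{lk}^m = \varepsilon_l^T + \varepsilon_k^T - \varepsilon_m^T$ and expand $\langle Y_{ij}^l, Y_{lk}^m\rangle$ as a sum of contributions over the coincidences of their supports: a source--source or target--target coincidence contributes $+1$, and a source--target coincidence contributes $-1$. A nonzero product $\alpha_{ij}^l\alpha_{lk}^m$ forces $(i,j,l)$ and $(l,k,m)$ (or $(k,l,m)$) to lie in $\Lambda \subset \Theta_n$, so $i<j<l$ and $m$ exceeds both $l$ and $k$; together with $i,j,k$ distinct this rules out every coincidence except the shared index $l$, which is a target of $Y_{ij}^l$ and a source of $Y_{lk}^m$. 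Hence $\langle Y_{ij}^l, Y_{lk}^m\rangle = -1$. Conversely, the same case analysis shows that for root vectors of $\Lambda$ with $i,j,k$ distinct the value $-1$ can only be produced by this source--target coincidence at $l$, which is precisely the configuration in which both structure constants are present and the product is nonzero. This is a short enumeration and I expect no essential difficulty.

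For part (2), I would begin from the standard fact that $\frakn_\mu$ is a Lie algebra iff the Jacobi identity holds, and that this is equivalent to the vanishing, for every $m$ and every triple $i,j,k$, of the $X_m$-component of $[[X_i,X_j],X_k] + [[X_j,X_k],X_i] + [[X_k,X_i],X_j]$. Expanding each double bracket in $\calB$ shows that this component equals $\sum_s \big( \alpha_{ij}^s\alpha_{sk}^m + \alpha_{jk}^s\alpha_{si}^m + \alpha_{ki}^s\alpha_{sj}^m \big)$, which is the left-hand side of \eqref{jacobi-constraint}. Skew-symmetry reduces the problem to distinct $i,j,k$, since a repeated index makes the whole cyclic sum vanish identically, and $\calB$-triangularity forces any nonzero summand to have $m$ larger than every source index, so the effective range of summation is $s < m$, exactly as written.

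The decisive step is to discard the constraints that hold automatically. By part (1), a summand such as $\alpha_{ij}^s\alpha_{sk}^m$ is nonzero only when $\langle Y_{ij}^s, Y_{sk}^m\rangle = -1$; hence for a given $(i,j,k)$ and $m$ admitting no index $s$ that produces such a chaining, the entire $X_m$-component vanishes identically and imposes no condition. Thus the Jacobi identity is equivalent to demanding \eqref{jacobi-constraint} only at the configurations where some pair of root vectors of $\Lambda$ chains with inner product $-1$, which is exactly the hypothesis in the statement. The main obstacle I anticipate is the bookkeeping in this reduction: one must treat both orderings $(l,k,m)$ and $(k,l,m)$ that can realize a chaining, depending on whether $l<k$ or $l>k$, carry the sign introduced by skew-symmetry when reordering, and verify that the three cyclic terms are collected so that precisely the flagged quadratic relations \eqref{jacobi-constraint} --- and no spurious ones --- survive.
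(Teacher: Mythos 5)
Your proposal is correct and follows essentially the same route as the paper's own proof: the same coincidence-counting case analysis (using $\calB$-triangularity to force the shared index to be a single source--target overlap) for part (1), and for part (2) the coordinate form of the Jacobi identity, reduction to distinct $i,j,k$ by skew-symmetry, truncation of the sum to $s<m$ by triangularity, and the use of part (1) to discard the identically-vanishing equations so that only the $-1$-flagged constraints remain.
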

Recall that the inner product of two root vectors is minus one if and
only if the corresponding entry of the Gram matrix is minus one.  One
consequence of this theorem is that the generalized Dynkin diagram
$S(U)$ for a nilsoliton metric Lie algebra can not have exactly one
dashed edge with weight minus one, or an equation like
Equation \eqref{jacobi-constraint} would need to be satisfied, but
would not be able to hold. 
 Another consequence is that if
there are no minus one edges, the Jacobi condition holds
automatically:
\begin{corollary}\label{jacobicondition-cor} 
Let $(\frakn,Q)$ be an $n$-dimensional inner product space, and let
$\mu$ be an element of $\Lambda^2 \frakn^\ast \otimes \frakn.$ Let
$\calB = \{X_i\}_{i=1}^n$ be an orthonormal basis for $(\frakn,Q)$ and
suppose that $\mu$ is $\calB$-triangular.  If the Gram matrix $U$ for
$(\frakn_\mu,Q)$ has no entries of -1, then $(\frakn_\mu,Q)$ is a Lie
algebra.
\end{corollary}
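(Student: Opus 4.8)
The plan is to obtain this as an essentially immediate consequence of Theorem~\ref{jacobicondition}, reading the hypothesis on $U$ through the correspondence between entries of the Gram matrix and inner products of root vectors. The key observation is that, by the definition $U = Y_\Lambda Y_\Lambda^T$, the $(i,j)$ entry of $U$ is exactly the inner product $\la Y_i, Y_j\ra$ of the $i$th and $j$th root vectors indexed by $\Lambda(\frakn_\mu,\calB)$. Thus the assumption that $U$ has no entries equal to $-1$ says precisely that no pair of root vectors arising from $\Lambda$ has inner product $-1$.

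First I would note that, since $\mu$ is $\calB$-triangular, $\Lambda(\frakn_\mu,\calB)\subset\Theta_n$, so Theorem~\ref{jacobicondition} applies. Part~(2) of that theorem states that $\frakn_\mu$ is a Lie algebra if and only if Equation~\eqref{jacobi-constraint} holds whenever there exist triples $(i,j,l)$ and $(l,k,m)$ or $(k,l,m)$ in $\Lambda(\frakn_\mu,\calB)$ with $\la Y_{ij}^l, Y_{lk}^m\ra = -1$. The strategy is to show that under the present hypothesis this triggering condition is never met, so that the Jacobi identity holds vacuously.

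To make this precise I would observe that both triples appearing in the constraint lie in $\Lambda$, and that membership in $\Theta_n$ forces the strict orderings $i<j<l$ together with $l<k<m$ (or $k<l<m$), so the two triples are necessarily distinct; hence $\la Y_{ij}^l, Y_{lk}^m\ra$ is an off-diagonal entry of $U$. By hypothesis this entry is not $-1$, so the premise of the implication in part~(2) is never satisfied. The required equation is therefore imposed for no pair of triples, the Jacobi condition holds automatically, and Theorem~\ref{jacobicondition} then yields that $(\frakn_\mu,Q)$ is a Lie algebra.

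Since the whole argument rests on a vacuous-truth observation, I do not expect any substantial obstacle. The only step demanding a little care is confirming the distinctness of the two triples, so that their inner product is genuinely an off-diagonal entry of $U$; even this is a safety check rather than a real difficulty, since the diagonal entries of $U$ all equal three by Theorem~\ref{gramproperties} and hence could never be $-1$ in any case.
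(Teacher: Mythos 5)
Your proposal is correct and follows the same route the paper takes: the paper derives this corollary directly from Theorem~\ref{jacobicondition} by observing that every inner product $\la Y_{ij}^l, Y_{lk}^m\ra$ appearing in the triggering condition is an entry of the Gram matrix, so the absence of $-1$ entries makes the Jacobi constraint hold vacuously. Your extra care in checking that the two triples are distinct (and your fallback remark that diagonal entries equal $3$ anyway) is a sound but inessential refinement of the same argument.
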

Equation \eqref{jacobi-constraint} may have continuous families of
solutions, or more than one finite solution (because of sign choices
in solving equations with squares).  It is possible for different sign
choices for the structure constants to give different nilpotent
structures on an inner product space $\frakn,$ and hence nonisometric
nilsolitons; i.e.\ there exists $\Lambda$ such that there are
nonisometric $\mu$ in $\Omega_\Lambda$ with the same structure vector.
This occurs in dimension six (see $\mu_7$ and $\mu_8$ in
\cite{will03}).  Two such nilpotent structures have the same Ricci
endomorphisms and derivations $D = \Ric - \beta \Id$, so there is not
a one-to-one correspondence between nilsoliton metrics and eigenvalues
of the derivation $D.$
Now we prove Theorem \ref{jacobicondition}.

\begin{proof}
The Jacobi identity holds for basis vectors $X_i, X_j,$ and $X_k$ if
and only if
\begin{equation}\label{jacobiidentity} 
\sum_{s = 1}^n \alpha_{ij}^s \alpha_{sk}^m + \alpha_{jk}^s
\alpha_{si}^m + \alpha_{ki}^s \alpha_{sj}^m = 0 \end{equation} for all
$m.$ This equation holds for $X_i, X_j$ and $X_k$ by skew-symmetry if
any index $i, j$ or $k$ is repeated, so the Jacobi condition holds if
and only if Equation \eqref{jacobiidentity} is true for all $i,j,k$ and
$m$ where $i, j, $ and $k$ are distinct.

Now we will establish that when $i, j$ and $k$ are distinct, a product of
the form $\alpha_{ij}^l \alpha_{lk}^m$ is nonzero if and only if $\la
Y_{ij}^l, Y_{lk}^m \ra = -1$ for root vectors $Y_{ij}^l$ and
$Y_{lk}^m.$ Suppose that $\alpha_{ij}^l \alpha_{lk}^m$ is nonzero. As
$\alpha_{ij}^l \ne 0$ and $\alpha_{lk}^m \ne 0$ and $\mu$ is
$\calB$-triangular by hypothesis, we have $i, j < l$ and $l, k < m.$
It follows that $i < m$ and $j < m.$ We also know that $l \ne k$ since
$\alpha_{lk}^m$ is nonzero, and $i \ne k$ and $j \ne k$ because $i,
j,$ and $k$ were assumed to be distinct.  The inner product of $
Y_{ij}^l$ and $Y_{lk}^m$ is then
\begin{align*} \la  Y_{ij}^l, Y_{lk}^m \ra &= 
\la \varepsilon_i^T + \varepsilon_j^T - \varepsilon_k^T,
\varepsilon_l^T + \varepsilon_k^T - \varepsilon_m^T \ra \\ &=
\delta_{il} +\delta_{ik} - \delta_{im} +\delta_{jl} +\delta_{jk} -
\delta_{jm} - \delta_{ll} - \delta_{lk} + \delta_{lm} \\ &= -1
. \end{align*} For the converse, suppose that the inner product of two
root vectors is minus one.  Then they must be of the form $Y_{ij}^l$
and $Y_{lk}^m,$ or $Y_{ij}^l$ and $Y_{kl}^m.$ In the first case,
$(i,j,l)$ and $(l,k,m)$ are in $\Lambda(\frakn_\mu,Q)$ so
$\alpha_{ij}^l \alpha_{lk}^m \ne 0 ,$ and in the second case,
$(i,j,l)$ and $(k,l,m)$ are in $\Lambda(\frakn_\mu,Q)$ so
$\alpha_{ij}^l \alpha_{lk}^m = -\alpha_{ij}^l \alpha_{kl}^m \ne 0 .$
We have proved the asserted equivalence in the first part of the
theorem.

Now we suppose that $(\frakn_\mu,Q)$ satisfies the Jacobi identity, so
Equation \eqref{jacobiidentity} holds for all $i,j,k$ and $m.$ Fix $m$
and suppose that the equation holds trivially.  Then there are no
nonzero terms of the form $\alpha_{ij}^l \alpha_{kl}^m$ so there are
no root vectors  whose inner product is minus one.
If Equation \eqref{jacobiidentity} does not hold trivially for some
value of $m,$ we can simplify it slightly by noting that terms of the
form $\alpha_{ij}^s \alpha_{ks}^m$ vanish when $s \ge m$ because $\mu$
is $\calB$-triangular.  This establishes the second assertion of the
theorem.    \end{proof}

\section{Conditions equivalent to the nilsoliton condition}
\label{proof of main thm}

\subsection{Derivations, gradings, and the nilsoliton condition}
First we prove some theorems relating derivations, gradings, the
nilsoliton condition, and eigenvalues.  In the proof of Theorem 4.14
of \cite{heberinv}, Heber characterized derivations of a metric
nilpotent algebra $(\frakn,Q)$ in terms of vectors of the form
$\varepsilon_i^T + \varepsilon_j^T - \varepsilon_k^T$ like the root
vectors used in this paper.  The next theorem is restatement and
slight generalization of Heber's idea.
For $(\mu_1, \ldots, \mu_n)^T$ in $\boldR^n,$ define
\begin{align*}
 \Sigma_{(\mu_1,\ldots,\mu_n)} &= \{(i,j,k) \in \Upsilon_n \, | \,
\mu_i + \mu_j = \mu_k \} \\ &= \{ (i,j,k) \in \Upsilon_n \, | \,
Y_{ij}^k \, (\mu_1, \ldots, \mu_n)^T = 0 .\}
\end{align*}
We say that a generalized Dynkin diagram $S(U)$ is a {\em minor graph}
of another generalized Dynkin diagram $S(U^\prime)$ if the vertices of
the first are a subset of the vertices of the second, and the
generalized Cartan matrix for the first is the corresponding minor of
the second.

\begin{theorem}\label{hebervar}  
Let $(\frakn,Q)$ be a $n$-dimensional nonabelian metric algebra with
orthonormal basis $\calB = \{X_i\}_{i = 1}^n.$ Let $D$ be the
endomorphism of \frakn defined by $D(X_i) = \mu_i X_i$ for $1 =
1,\ldots, n,$ and let $v_D = (\mu_1, \ldots, \mu_n)^T.$

 The following properties are equivalent:
\begin{enumerate}
\item{$D$ is a derivation,}
\item{ $\Lambda(\frakn,\calB)$ is a subset of $\Sigma_{(\mu_1,\ldots,
 \mu_n)},$ }
\item{$v_D$ is in the kernel of the root matrix
$Y_{\Lambda(\frakn,\calB)}$ for
$\Lambda(\frakn,\calB).$}\label{kernelproperty}
\item{$S(\Lambda(\frakn,\calB))$ is a minor graph of
$S(\Sigma_{(\mu_1,\ldots, \mu_n)})$}
\end{enumerate}
Furthermore, if $\Lambda(\frakn,\calB)$ is a subset of a set of
integer triples $\Gamma \subset \Upsilon_n,$ the kernel of the root
matrix for $\Gamma$ is a subset of kernel of the root matrix for
$\Lambda(\frakn,\calB).$
\end{theorem}

The theorem allows us to interpret the set  $\Sigma_{(\mu_1,\ldots,
 \mu_n)}$ as the admissible  nonzero structure constants  for
algebras admitting a derivation with eigenvalues $\mu_1,\ldots,
 \mu_n.$ 
\begin{proof}  
Applying the definition of derivation to pairs of basis vectors, we
see that $D$ is a derivation if and only if $[X_i,X_j]$ lies in the
$\mu_i + \mu_j$ eigenspace for all basis vectors $X_i$
and $X_j.$ Equivalently, since
\[ \la D([X_i,X_j]), X_k \ra  = \la 
\sum_{l = 1}^n \alpha_{ij}^l D(X_l), X_k \ra = \alpha_{ij}^k \mu_k, \]
if $\alpha_{ij}^k \ne 0,$ then $\mu_k = \mu_i + \mu_j.$ Then $D$ is a
derivation if and only if whenever the triple $(i,j,k)$ is in
$\Lambda(\frakn,\calB),$
\[ 0 =  \mu_i + \mu_j - \mu_k = 
 Y_{ij}^k (\mu_1, \ldots, \mu_n)^T = Y_{ij}^k v_D ;\] or
$\Lambda(\frakn,\calB) \subset \Sigma_{(\mu_1, \ldots, \mu_n)}.$ This
confirms the equivalence of the 
first and second properties.

Recall that because the rows of the root matrix $Y$ for a set of
 triples $\Gamma$ are the vectors $\{ Y_{ij}^k \, | \, (i,j,k) \in
 \Gamma \},$ a vector $v_D$ is in the kernel of the root matrix for
 $\Gamma$ if and only if $Y_{ij}^k v_D = 0$ for all $ (i,j,k) \in
 \Gamma.$ By this fact, the third property holds if and only if
 $Y_{ij}^k v_D = 0$ for all $(i,j,k)$ in $\Lambda(\frakn,\calB),$
 which occurs if only if $\Lambda(\frakn,\calB)$ is a subset of
 $\Sigma_{(\mu_1, \ldots, \mu_n)}.$ Therefore the second and third
 properties are equivalent.  Clearly the second and fourth properties
 are equivalent.

It is immediate from the same fact that whenever $\Lambda \subset
\Gamma \subset \Upsilon_n,$ the rows of the root matrix for $ \Lambda$
are rows of the root matrix for $\Gamma.$ If a vector $v$ is in the
kernel of the root matrix for $\Gamma,$ then $v$ is in the kernel of
the root matrix for $\Lambda.$ Letting $\Lambda =
\Lambda(\frakn,\calB)$ concludes the proof.
\end{proof}

It was shown in \cite{heberinv} that for a a metric Lie algebra
$(\frakn,Q)$ there is a unique $\beta$ so that $\Ric - \beta \Id$ is a
derivation, and it is
\[\beta = - \smallfrac{\trace D^2}{\trace D} = 
-\smallfrac{\sum \mu_i^2}{\sum \mu_i}.\] If $v_D = (\mu_1, \ldots,
\mu_n)$ is such a derivation, then we call $(\mu_1, \ldots, \mu_n)$
the {\em eigenvalue type} of $(\frakn,Q).$ The following essential
theorem translates the nilsoliton condition into a linear condition on
the eigenvalues of the Ricci endomorphism.  Theorem 1.6 of
\cite{wolter91a} is an analogous theorem for Einstein solvmanifolds.
\begin{theorem}\label{nilsolitonconstraint}  
Let $(\frakn_\mu, Q)$ be a nonabelian metric algebra with Ricci
 eigenvector basis $\calB.$ Let $D = \Ric - \beta \Id.$ The following
 are equivalent:
\begin{enumerate}
\item{$(\frakn_\mu, Q)$ satisfies the nilsoliton condition with
nilsoliton constant $\beta$}
\item{The eigenvalue vector $ v_D$ for $D$ with respect to $\calB$
lies in the kernel of the root matrix for $(\frakn,\calB)$ with
respect to $\calB.$}
\item{For noncommuting eigenvectors $X$ and $Y$ for the nil-Ricci
endomorphism with eigenvalues $\kappa_X$ and $\kappa_Y$, the bracket
$[X,Y]$ is an eigenvector for the nil-Ricci endomorphism with
eigenvalue $\kappa_X + \kappa_Y - \beta.$ }
\item{$Y_{ij}^k \, \bfRic = \beta $ for all $(i,j,k)$ in
$\Lambda(\frakn_\mu,\calB).$}
\end{enumerate}
When $(\frakn_\mu, Q)$ satisfies the soliton condition with nilsoliton
constant $\beta$, the eigenspaces for the derivation $\Ric_\mu - \beta
\Id$ are orthogonal.
\end{theorem}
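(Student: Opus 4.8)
The plan is to route the entire four-way equivalence through Theorem \ref{hebervar}, which already characterizes when a diagonal map is a derivation in terms of the root matrix. The decisive observation is that, because $\calB$ is a Ricci eigenvector basis, the map $D = \Ric_\mu - \beta\Id$ is diagonal in $\calB$, with eigenvalue vector $v_D = \bfRic^\calB_\mu - \beta\boldone$, where $\bfRic^\calB_\mu = (\kappa_{X_1}, \ldots, \kappa_{X_n})^T$ records the nil-Ricci eigenvalues. I would establish the equivalences in the pattern (1)$\Leftrightarrow$(2), (2)$\Leftrightarrow$(4), (1)$\Leftrightarrow$(3), then dispatch the orthogonality addendum. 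For (1)$\Leftrightarrow$(2): by definition $(\frakn_\mu,Q)$ satisfies the nilsoliton condition with constant $\beta$ exactly when $D$ is a derivation; since $D$ is diagonal in $\calB$ with eigenvalue vector $v_D$, the equivalence of properties (1) and (3) in Theorem \ref{hebervar} says $D$ is a derivation if and only if $v_D$ lies in the kernel of the root matrix $Y_{\Lambda(\frakn_\mu,\calB)}$, which is precisely statement (2). No further work is required.

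For (2)$\Leftrightarrow$(4), I would unwind $v_D \in \ker Y$ entrywise: membership means $Y_{ij}^k v_D = 0$ for every $(i,j,k) \in \Lambda(\frakn_\mu,\calB)$. Substituting $v_D = \bfRic^\calB_\mu - \beta\boldone$ and using the arithmetic fact $Y_{ij}^k \boldone = (\varepsilon_i^T + \varepsilon_j^T - \varepsilon_k^T)\boldone = 1+1-1 = 1$, I get $Y_{ij}^k v_D = Y_{ij}^k \bfRic^\calB_\mu - \beta$. Hence $Y_{ij}^k v_D = 0$ for all triples in $\Lambda$ if and only if $Y_{ij}^k \bfRic^\calB_\mu = \beta$ for all such triples, which is statement (4). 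The single identity $Y_{ij}^k \boldone = 1$ is exactly what converts the homogeneous kernel condition of (2) into the inhomogeneous condition of (4).

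For (1)$\Leftrightarrow$(3): since the bracket and the derivation identity are bilinear and $\calB$ is a basis of eigenvectors, it suffices to test the derivation property on pairs of Ricci eigenvectors. For eigenvectors $X, Y$ with eigenvalues $\kappa_X, \kappa_Y$, one computes $[DX,Y] + [X,DY] = (\kappa_X - \beta)[X,Y] + (\kappa_Y - \beta)[X,Y] = (\kappa_X + \kappa_Y - 2\beta)[X,Y]$. Thus $D$ is a derivation if and only if $D([X,Y]) = (\kappa_X + \kappa_Y - 2\beta)[X,Y]$ for all such $X,Y$; rewriting $D = \Ric_\mu - \beta\Id$, this reads $\Ric_\mu([X,Y]) = (\kappa_X + \kappa_Y - \beta)[X,Y]$ whenever $[X,Y] \neq 0$, i.e. $[X,Y]$ is a nil-Ricci eigenvector with eigenvalue $\kappa_X + \kappa_Y - \beta$, which is precisely (3). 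I expect the only genuinely error-prone step to be this constant bookkeeping: the derivation identity produces a shift of $2\beta$ at the level of $D$-eigenvalues, but the additive $\beta$ hidden in $D = \Ric_\mu - \beta\Id$ absorbs one copy, leaving the single $-\beta$ shift in the $\Ric_\mu$-eigenvalue recorded in (3).

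Finally, for the orthogonality addendum: the nil-Ricci form $\ric_\mu$ is symmetric by its definition in Equation \eqref{nilriccicurvature}, so $\Ric_\mu$ is self-adjoint with respect to $Q$, and therefore so is $D = \Ric_\mu - \beta\Id$. Eigenvectors of a $Q$-self-adjoint operator belonging to distinct eigenvalues are automatically $Q$-orthogonal, so the eigenspaces of $D$ are mutually orthogonal, which completes the proof. (Note this last fact uses only the symmetry of $\Ric_\mu$ and not the nilsoliton condition itself, so it holds in the stated generality without additional hypotheses.)
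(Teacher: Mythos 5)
Your proposal is correct and follows essentially the same route as the paper: it establishes (1)$\Leftrightarrow$(2) by the same appeal to Theorem \ref{hebervar}, handles (1)$\Leftrightarrow$(3) by the same eigenvalue translation between $D = \Ric_\mu - \beta\Id$ and $\Ric_\mu$ (with the $2\beta$ bookkeeping done correctly), and derives the orthogonality addendum from the symmetry of the nil-Ricci form, just as the paper does. The only cosmetic difference is that you connect (4) to (2) via the identity $Y_{ij}^k\boldone = 1$, whereas the paper connects (4) to (3) via $Y_{ij}^k\,\bfRic = \kappa_{X_i} + \kappa_{X_j} - \kappa_{X_k}$ from Theorem \ref{riccitensor}; these are the same computation traversed in a different order.
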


\begin{proof} 
As $\Ric$ and $D$ differ by a multiple of the identity, the basis
$\calB$ simultaneously diagonalizes the two maps $\Ric$ and $D,$ with
the $\kappa$ eigenspace for $\Ric$ equal to the $\kappa - \beta$
eigenspace for $D.$ By definition, the metric Lie algebra
$(\frakn_\mu, Q)$ satisfies the nilsoliton condition with nilsoliton
constant $\beta$ if and only if $D = \Ric - \beta \Id$ is a
derivation.  The map $D$ is a derivation if and only if for
noncommuting eigenvectors $X$ and $Y$ of $D$ with eigenvalues $\mu_X$
and $\mu_Y$, the bracket $[X,Y]$ is an eigenvector for the nil-Ricci
endomorphism with eigenvalue $\mu_X + \mu_Y.$ The $\mu$ eigenspace of
$D$ is equal to the $\mu + \beta$ eigenspace of $\Ric;$ translating
the equivalence into a statement about $\Ric$ eigenspaces shows the
equivalence of the first and third properties.
 
By Theorem \ref{hebervar}, $D$ is a derivation if and only if $v_D$ is
in the kernel of $Y.$ Hence the first two properties are equivalent.

Assume $(i,j,k)$ is in $\Lambda(\frakn_\mu, \calB).$ By Theorem
\ref{riccitensor}, $Y_{ij}^k \bfRic = \kappa_{X_i} + \kappa_{X_j}-
\kappa_{X_k},$ hence $Y_{ij}^k \bfRic = \beta$ if and only if
$\kappa_{X_k} = \kappa_{X_i} + \kappa_{X_j}- \beta.$ The equivalence
of the third and fourth properties follows from writing arbitrary
eigenvectors $X$ and $Y$ in terms of the basis eigenvectors.

 The subspaces of the grading for $(\frakn_\mu,Q)$ are orthogonal
because they are eigenspaces of the nil-Ricci endomorphism.   
\end{proof}

J.\ Heber has shown that after rescaling, the eigenvalues for $D =
\Ric - \beta \Id$ for a nilsoliton $(\frakn,Q)$ are positive integers
with no common divisors (Theorem C, \cite{heberinv}).  The previous
theorem can be used in an elementary argument for why the nilsoliton
constant $\beta$ for a nonabelian nilmanifold must be negative.  Were it not,
then by the previous theorem, 
the span of all Ricci eigenvectors with negative eigenvalues
would be a subalgebra.  But the Ricci endomorphism is negative
semidefinite when restricted to the generating subspace
$[\frakn,\frakn]^\perp$ by Theorem \ref{niceformula}, so then $\Ric$
would be negative definite on all of $\frakn.$ This is a
contradiction, since again by Theorem \ref{niceformula}, the Ricci
endomorphism is positive definite on the center.

The next lemma is elementary.
\begin{lemma} Let $(\frakn_\mu,Q)$ 
be a metric algebra with orthonormal basis $\calB = \{X_i\}_{i =
1}^n.$ Suppose $\frakn_\mu$ has an $\boldR^+$-grading $\frakn_{\mu_i}
= \oplus_{\mu_i \in \boldR^+} \frakn_\mu,$ such that each basis vector
$X_i$ is in some subspace $\frakn_{\mu_i}$ for the grading.  Then
$\Lambda(\frakn_\mu,\calB) \subset \Sigma_{(\mu_1, \ldots, \mu_n)}
\subset \Theta_n$.
\end{lemma}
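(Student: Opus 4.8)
The plan is to establish the two asserted inclusions, $\Lambda(\frakn_\mu,\calB) \subset \Sigma_{(\mu_1,\ldots,\mu_n)}$ and $\Sigma_{(\mu_1,\ldots,\mu_n)} \subset \Theta_n$, separately. The single structural fact that drives both is that the grading is automatically $Q$-orthogonal once the basis is adapted to it. First I would observe that, since $\calB$ is orthonormal and each $X_i$ lies in a single graded summand $\frakn_{\mu_i}$, the subset $\{X_l \mid \mu_l = \lambda\}$ is an orthonormal family contained in $\frakn_\lambda$; summing the spans of these families over all weights $\lambda$ recovers $\mathrm{span}(\calB) = \frakn = \oplus_\lambda \frakn_\lambda$, so in fact $\frakn_\lambda = \mathrm{span}\{X_l \mid \mu_l = \lambda\}$. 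Consequently distinct graded summands are spanned by disjoint pieces of an orthonormal basis and are therefore mutually $Q$-orthogonal.

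For the inclusion $\Lambda \subset \Sigma$, I would take $(i,j,k)\in\Lambda(\frakn_\mu,\calB)$, so that $i<j$ and $\alpha_{ij}^k = \la [X_i,X_j],X_k\ra \ne 0$. Because $X_i \in \frakn_{\mu_i}$ and $X_j \in \frakn_{\mu_j}$, the grading property $[\frakn_{\mu_i},\frakn_{\mu_j}]\subset \frakn_{\mu_i+\mu_j}$ forces $[X_i,X_j]\in\frakn_{\mu_i+\mu_j}$, i.e.\ $[X_i,X_j]$ is a combination of the basis vectors $X_l$ with $\mu_l=\mu_i+\mu_j$. Pairing with $X_k$ and using orthonormality, the coefficient $\alpha_{ij}^k$ can be nonzero only when $\mu_k=\mu_i+\mu_j$. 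Together with $i<j$, this is exactly the statement $(i,j,k)\in\Sigma_{(\mu_1,\ldots,\mu_n)}$.

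For the inclusion $\Sigma\subset\Theta_n$, I would take $(i,j,k)\in\Sigma_{(\mu_1,\ldots,\mu_n)}$, so $i<j$ and $\mu_k=\mu_i+\mu_j$. Since the grading weights are positive, $\mu_k=\mu_i+\mu_j>\mu_i$ and $\mu_k>\mu_j$; in particular $\mu_k\ne\mu_i$ and $\mu_k\ne\mu_j$, so $i,j,k$ are pairwise distinct. To promote distinctness to the ordered condition $i<j<k$ defining $\Theta_n$, I would invoke the standing convention that the basis is indexed so that the weights are non-decreasing, $\mu_1\le\cdots\le\mu_n$. Under that convention $\mu_k>\mu_i$ forces $k>i$ and $\mu_k>\mu_j$ forces $k>j$, which combined with the given $i<j$ yields $i<j<k$, i.e.\ $(i,j,k)\in\Theta_n$.

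The main (indeed only) delicate point is this last step: the inclusion into $\Theta_n$ genuinely requires both the positivity of the $\boldR^+$-grading weights and the weight-compatible ordering of the basis. This is where I would be most careful, since without the ordering the inclusion simply fails — for instance weights $(2,1,1)$ give $(2,3,1)\in\Sigma_{(2,1,1)}$ while $(2,3,1)\notin\Theta_3$. Everything else reduces to the orthogonality of the graded pieces established at the outset.
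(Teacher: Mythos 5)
Your proof is correct, and where it overlaps with the paper's argument it takes the same route: the inclusion $\Lambda(\frakn_\mu,\calB)\subset\Sigma_{(\mu_1,\ldots,\mu_n)}$ comes straight from the grading property $[\frakn_\mu,\frakn_\nu]\subset\frakn_{\mu+\nu}$ together with orthonormality, and distinctness of the indices comes from positivity of the weights. The differences are worth recording. First, the paper simply invokes ``the orthogonality of the eigenspaces,'' which is not a stated hypothesis; you derive it, via the dimension count giving $\frakn_\lambda=\mathrm{span}\{X_l \mid \mu_l=\lambda\}$, from the assumption that the orthonormal basis is adapted to the grading. Second, and more substantially, the paper's own proof of the second inclusion stops at $\la[X_i,X_j],X_i\ra=\la[X_i,X_j],X_j\ra=0$, i.e.\ it only shows the three indices in a triple are pairwise distinct --- membership in $\Psi_n$, not in $\Theta_n$ --- and never addresses the ordering $i<j<k$. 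You are right that this last step genuinely requires the basis to be indexed with non-decreasing weights, and your counterexample is valid: with weights $(2,1,1)$, realized by the Heisenberg algebra $[X_2,X_3]=X_1$ graded by $\frakn_1=\la X_2,X_3\ra$ and $\frakn_2=\la X_1\ra$, one has $(2,3,1)\in\Lambda\cap\Sigma_{(2,1,1)}$ but $(2,3,1)\notin\Theta_3$, so even the lemma's full conclusion fails without reordering. One caution: the ``standing convention'' you invoke does not actually appear in the paper --- its remark that bases are ordered only means they are tuples --- so what you have really proved is that the lemma holds after permuting the basis so the weights are non-decreasing; that is the charitable reading the paper needs anyway, both here and in its application to Corollary \ref{goodbasis}. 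In short, your argument is sound, matches the paper's where the paper is complete, and repairs a real gap in its treatment of the $\Theta_n$ inclusion.
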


\begin{proof}
Since $\oplus_{\mu \in \boldR^+} \frakn_\mu$ is a grading,
 $[\frakn_{\mu},\frakn_{\nu}]$ lies in $\frakn_{\mu + \nu}$ for all
 $\mu$ and $\nu$ in $\boldR^+.$ This directly implies that
 $\Lambda(\frakn_\mu,\calB) \subset \Sigma_{(\mu_1, \ldots, \mu_n)}$.

The space $\frakn_{\mu + \nu}$ intersects both of the spaces
 $\frakn_\mu$ and $\frakn_\nu$ trivially as $\mu + \nu$ is greater
 than both $\mu$ and $\nu.$ The orthogonality of the eigenspaces and
 the compatibility of the basis with the grading imply that $\la
 [X_i,X_j], X_i \ra = 0$ and $\la [X_i,X_j], X_j \ra = 0,$ so
 structure constants of the form $\alpha_{ij}^i$ are always zero, as
 claimed.    \end{proof}

The positivity of the eigenvalues for $D$ also gives the following
useful corollary to the previous lemma.
\begin{corollary}\label{goodbasis} 
Let $(\frakn_\mu,Q)$ be a nilsoliton metric algebra with Ricci
eigenvector basis $\calB.$ Then $\mu$ is $\calB$-triangular.
\end{corollary}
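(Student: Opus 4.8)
The plan is to deduce the corollary directly from the preceding lemma by checking that its hypotheses hold, namely that $\frakn_\mu$ carries an $\boldR^+$-grading for which each basis vector of $\calB$ lies in a single grading subspace. We may assume throughout that $\frakn_\mu$ is nonabelian, since if $\frakn_\mu$ is abelian then $\Lambda(\frakn_\mu,\calB)=\varnothing \subset \Theta_n$ and there is nothing to prove.

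First I would construct the grading from the nilsoliton derivation. By hypothesis $(\frakn_\mu,Q)$ satisfies the nilsoliton condition, so $D=\Ric_\mu-\beta\Id$ is a symmetric derivation of $\frakn_\mu$. The decisive input is positivity of its spectrum: the argument following Theorem \ref{nilsolitonconstraint} shows $\beta<0$ for a nonabelian nilsoliton, and by Heber's Theorem C (\cite{heberinv}) the eigenvalues of $D$ are, after rescaling, positive integers, so every eigenvalue $\mu_i=\kappa_{X_i}-\beta$ of $D$ is strictly positive. Since $D$ is a derivation its eigenspaces satisfy $[\frakn_\lambda,\frakn_\nu]\subseteq \frakn_{\lambda+\nu}$, and as all the eigenvalues are positive the eigenspace decomposition $\frakn=\oplus_\lambda \frakn_\lambda$ is an $\boldR^+$-grading.

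Next I would record compatibility of $\calB$ with this grading and invoke the lemma. Because $\calB$ is a Ricci eigenvector basis, each $X_i$ is an eigenvector of $\Ric_\mu$, hence of $D=\Ric_\mu-\beta\Id$ with eigenvalue $\mu_i$, so $X_i$ lies in the single grading subspace $\frakn_{\mu_i}$. This is precisely the hypothesis of the preceding lemma, whose conclusion $\Lambda(\frakn_\mu,\calB)\subset \Sigma_{(\mu_1,\ldots,\mu_n)}\subset \Theta_n$ then gives exactly the assertion that $\mu$ is $\calB$-triangular.

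The content of the argument is concentrated entirely in the strict positivity of every eigenvalue of $D$. This is what makes the decomposition an $\boldR^+$-grading rather than a mere grading, and, together with the convention that $\calB$ is ordered by increasing eigenvalue, it is what promotes the elementary conclusion $\alpha_{ij}^i=\alpha_{ij}^j=0$ (membership in $\Psi_n$) to the full inequality $i<j<k$ (membership in $\Theta_n$). That positivity is the one step I would import wholesale from Heber's theorem rather than reprove, so no genuine obstacle remains once it is granted.
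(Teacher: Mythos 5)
Your proof is correct and takes essentially the same route as the paper's: both arguments use the positivity of the eigenvalues of $D = \Ric_\mu - \beta \Id$ (via Heber's Theorem C) to turn the $D$-eigenspace decomposition into an $\boldR^+$-grading whose summands contain the vectors of $\calB$, and then invoke the preceding lemma. If anything, your write-up is more careful than the paper's one-line proof, since you dispose of the abelian case explicitly and flag the basis-ordering convention needed to promote the containment in $\Psi_n$ to the full containment in $\Theta_n$.
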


\begin{proof}
By definition, the Ricci eigenvector basis is orthonormal.  Suppose
that the nilsoliton constant for $(\frakn,Q)$ is $\beta$ and let $D =
\Ric - \beta \Id.$ Then $\calB$ is also an orthonormal basis of
eigenvectors for $D.$ The eigenvalues of $D$ are all positive, so $D$
defines an $\boldR^+$ grading of $\frakn.$   \end{proof}

\subsection{Proof of 
Theorem \ref{mainthm}.}

\begin{proof}
Let $D = \Ric - \beta\Id.$ The Ricci vector $\bfRic$ is the eigenvalue
vector for the nil-Ricci endomorphism with respect to a Ricci
eigenvector basis.  Let $v_D$ be the eigenvalue vector for $D$ with
respect to $\calB.$ It is given by
\[ v_D = v_{\Ric} - \beta v_{\Id} = \bfRic  - \beta \onevector{n}. \]
By Theorem \ref{riccitensor}, the Ricci vector equals $ -\frac{1}{2}
 Y^T [\alpha^2], $ where $[\alpha^2]$ is the structure vector for
 $(\frakn,Q)$ with respect to $\calB.$ Hence
\[  v_D =   -\smallfrac{1}{2}   Y^T [\alpha^2] - \beta \onevector{n}.\]
Multiplying both sides of the equation by the root matrix yields
\[ Y  v_D =   -\smallfrac{1}{2}  Y  Y^T [\alpha^2] - \beta Y \onevector{n}\] 
For all triples $(i,j,k),$ we have $Y_{ij}^k \onevector{n} =
\onevector{1},$ so $Y \onevector{n} = \onevector{n}.$ Making this
substitution and noting that $Y Y^T = U,$ we find that
\[ Y  v_D =   -\smallfrac{1}{2} U [\alpha^2] - \beta  \onevector{n}.\]
Therefore, $Y v_D = 0$ if and only if $U [\alpha^2] = -2\beta
\onevector{n}.$ By Theorem \ref{hebervar}, $Y v_D = 0$ if and only if
$D$ is a derivation.  Thus, as asserted, $(\frakn,Q)$ satisfies the
nilsoliton condition with nilsoliton constant $\beta$ if and only if
$U [\alpha^2] = -2\beta\onevector{n}.$
\end{proof}

\subsection{Examples}
We next illustrate Theorem \ref{mainthm} and Theorem
 \ref{jacobicondition} with a continuation of Example
 \ref{concreteexample1}.
\begin{example}\label{concreteexample1-b}  Let $(\frakn,Q)$ be the metric
nilpotent algebra with Ricci eigenvector basis $\calB$ from Example
\ref{concreteexample1}. The set of root vectors $\{ Y_{12}^3 ,
Y_{13}^4 , Y_{14}^5 , Y_{23}^5 \}$ is linearly independent, so the
matrix $U$ is invertible.  Thus, the system $Uv=-2\beta \onevector{4}$
is consistent. The solution is
\[ \begin{bmatrix}
a^2 \\ b^2 \\ c^2 \\ d^2
\end{bmatrix} = -2 \beta \begin{bmatrix} 3 & 0 & 1 & 0 \\ 
0 & 3 & 0 & 1 \\ 1 & 0 & 3 & 1 \\ 0 & 1 & 1 & 3 \\
\end{bmatrix}^{-1}
 \begin{bmatrix}  
1 \\ 1 \\ 1 \\ 1
\end{bmatrix} =  -\frac{2\beta}{11}
\begin{bmatrix} 3 \\ 3 \\ 2\\ 2  \end{bmatrix}. \]

We normalize $\beta$ to $-11/2$ and take positive roots, so
$a=b=\sqrt{3}$ and $c=d=\sqrt{2}$ are structure constants defining a
metric algebra satisfying the nilsoliton condition by Theorem
\ref{mainthm}.  There are no entries of minus one in the Gram matrix,
so by Corollary \ref{jacobicondition}, $\frakn_\mu$ satisfies the
Jacobi identity and is a Lie algebra.  Up to rescaling and metric
isomorphism, this is the unique nilsoliton metric in the equivalence
class $\Omega_{\Lambda} \subset \calR(\frakn,\calB).$ The derivation
$D = \Ric_\mu - \beta \Id$ has eigenvalue vector
\[ (\mu_1,\mu_2,\mu_3,\mu_4,\mu_5)^T 
= \bfRic_\mu^\calB + \smallfrac{11}{2} \onevector{5} =
\smallfrac{3}{2}(1,2,3,4,5)^T.\] The derivation $D$ defines a metric
solvable extension of $(\frakn_\mu,Q)$ which is Einstein.

 The Dynkin diagram $S(U),$ illustrated in Figure \ref{A4}, is that of
 $A_4$ and the structure vector defines a weighting so that vertices
 have weights 3,2,2, and 3 as shown.
\begin{figure}
\begin{center}
\setlength{\unitlength}{1mm}
\begin{picture}(70,16)
\multiput(10,8)(20,0){4}{\circle*{1.5}} \put(10,8){\line(1,0){60}}
\put(10,10){$3$} \put(30,10){$2$} \put(50,10){$2$} \put(70,10){$3$}
\put(5,3){$(1,2,3)$} \put(25,3){$(1,4,5)$} \put(45,3){$(2,3,5)$}
\put(65,3){$(1,3,4)$}
\end{picture}
\end{center}
\caption{The weighted generalized Dynkin diagram $S(U)$ for Example 12}
\label{A4}
\end{figure}
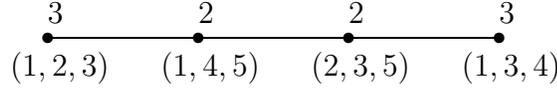
It is easy to check that Equation \eqref{weighting} holds for the
weighting of Dynkin diagram-- triple the weight of any vertex plus the
sum of the weights of the adjacent vertices has the constant value of
$11 = -2 \beta.$ There are no $(-1)$-weighted edges due to the
lack of $-1$s in the Gram matrix; this tells us that the Jacobi
condition holds.
\end{example}

The next example illustrates both Theorem \ref{mainthm} and Theorem
\ref{jacobicondition}.
\begin{example}\label{mu9} 
Let $\frakn$ be a six-dimensional vector space equipped with the inner
product $Q$ with respect to which $\calB = \{X_i\}_{i=1}^6$ is an
orthonormal basis.  Consider the level set $\Omega_\Lambda$ in
$\Lambda^2 \frakn^\ast \otimes \frakn$ for
\[ \Lambda = \{ (1,2,4), (1,4,5), (1,5,6), (2,3,5), (3,4,6) \} .\]
The Gram matrix of $\Lambda(\frakn_\mu,\calB)$ for $\mu$ in $\Lambda$
 is $\D{U =
\begin{bmatrix}  
3 & 0 & 1 & 1 & -1 \\ 0 & 3 & 0 & 1 & 1 \\ 1 & 0 & 3 & -1 & 1 \\ 1 & 1
& -1 & 3 & 1 \\ -1 & 1 & 1 & 1 & 3\\
\end{bmatrix}
}.$ A computation shows that a structure vector $[\alpha^2]$ is a
solution to $Uv=\onevector{5}$ if and only if it is of the form
\begin{equation}\label{mu9solution} [\alpha^2] = \begin{bmatrix}
(\alpha_{12}^{4})^2 \\ (\alpha_{14}^{5})^2 \\ (\alpha_{15}^{6})^2 \\
(\alpha_{23}^{5})^2 \\ (\alpha_{34}^{6})^2 \\
\end{bmatrix} = \frac{1}{20}
\begin{bmatrix} 
1 \\ 4 \\ 9 \\ 8\\ 0 \\
\end{bmatrix}
+ \frac{s}{20}
\begin{bmatrix} 
1 \\ 0 \\ -1 \\ -1 \\ 1 \\
\end{bmatrix}
\end{equation}
The only entries of minus one in the Gram matrix are those from the
inner products $\la Y_{12}^4, Y_{34}^6 \ra$ and $ \la Y_{15}^ 6 ,
Y_{23} ^ 5\ra,$ which correspond to nontrivial terms
\begin{align*} 
(\alpha_{15}^6 \alpha_{23}^5 )^2 &= \frac{1}{20^2} (9 - s) (8 - s)
\quad \text{and}\\ (\alpha_{12}^4 \alpha_{34}^6 )^2 &= \frac{1}{20^2}
(1 + s)s \\
\end{align*}
in the Jacobi identity in Equation \eqref{jacobiidentity}. Both of the
expressions are from the nontriviality of the equation with $\{i, j,
k\} = \{1, 2, 3\}$ and $m =6.$ By Theorem \ref{jacobicondition}, the
Jacobi identity holds for $(\frakn_\mu,\calB)$ with structure vector
$[\alpha^2]$ if and only if
\[ \sum_{s< 6} \alpha_{12}^s \alpha_{s3}^6 + \alpha_{23}^s \alpha_{51}^6 + 
\alpha_{31}^s \alpha_{s2}^6 = 0.\] Substituting in for the other
structure constants (all zero), we find that
\begin{align*}
 \alpha_{12}^4 \alpha_{34}^6 + \alpha_{23}^4 \alpha_{15}^6 &= 0 \\
 (\alpha_{12}^4 )^2 (\alpha_{34}^6)^2 &= (\alpha_{23}^4)^2
 (\alpha_{15}^6)^2 \label{mu9s=} \\ (9-s)(8-s) &= (1 + s)s \\ s &= 4.
\end{align*}
When we let $s=4$ in Equation \eqref{mu9solution}, we get
 \[ [\alpha^2]=\frac{1}{20}(5,4,4,5,4)^T, \]
and the resulting $(\frakn_\mu,Q)$ is a Lie algebra.  After rescaling
and solving for structure constants from $[\alpha^2]$, we find that
\[ [X_1,X_2]=\sqrt{5} X_4 \qquad [X_1,X_4] = 2 X_5 \qquad
 [X_1,X_5]=2 X_6 \] \[ [X_2,X_3]= -\sqrt{5} X_5 \qquad [X_3,X_4]=2 X_6
\] defines a nilsoliton metric Lie algebra.  We could have made other
sign choices for the structure constants as long as the signs of
$\alpha_{12}^4 \alpha_{34}^6$ and $\alpha_{23}^4 \alpha_{15}^6$ were
different; however, all these sign choices yield isometric
nilsolitons.
\end{example}

\subsection{Applications to graded metric filiform Lie algebras}
\label{derivations}  
In this section, we restate Theorem \ref{hebervar} of Section
\ref{proof of main thm} in terms of the generalized Dynkin diagrams
and give an application to filiform nilsoliton metric Lie algebras.

Suppose that an $n$-dimensional metric nilpotent Lie algebra
 $(\frakn,Q)$ admits a symmetric derivation $D$ with eigenvalues
 $\mu_1, \ldots,\mu_n.$ By Lemma 2.2 of \cite{heberinv}, $D$ commutes
 with the Ricci endomorphism of $(\frakn,Q),$ so there exists a Ricci
 eigenvector basis $\calB = \{X_i \}_{i = 1}^n$ adapted to $D;$ that
 is, each $X_i$ is an eigenvector for $D.$ By Theorem \ref{hebervar},
 $\Lambda(\frakn,\calB)$ is a subset of $\Sigma_{(\mu_1,\ldots,
 \mu_n)}.$ Conversely, if $(\frakn,Q)$ has Ricci eigenvector basis
 $\calB$ and $\Lambda(\frakn,\calB)$ is a subset of
 $\Sigma_{(\mu_1,\ldots, \mu_n)},$ then by Theorem
 \ref{nilsolitonconstraint} $D(X_i) = \mu_i X_i$ defines a derivation
 of $\frakn.$

The following theorem summarizes the previous discussion.
\begin{theorem}\label{minorgraph}  
Let $(\frakn,Q)$ be a $n$-dimensional metric nilpotent Lie algebra
with Ricci eigenvector basis $\calB.$ Suppose that $D$ is a symmetric
derivation of $\frakn$ with eigenvalue vector $v_D=(\mu_1, \ldots,
\mu_n).$ Then the generalized Dynkin diagram for
$\Lambda(\frakn,\calB)$ is a minor graph of the generalized Dynkin
diagram of $\Sigma_{\mu_1, \ldots, \mu_n},$ which in turn is a subset
of $\Theta_n.$ Conversely, if $\Lambda(\frakn,\calB)$ is a minor graph
graph of $\Sigma_{\mu_1, \ldots, \mu_N}$ then the endomorphism $D$ of
$\frakn$ defined by $D(X_i) = \mu_i X_i$ for $i = 1, \ldots, n$ is a
symmetric derivation $D$ of $\frakn.$
\end{theorem}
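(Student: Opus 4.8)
The plan is to read Theorem \ref{minorgraph} as a diagrammatic repackaging of Theorem \ref{hebervar}: once a suitable orthonormal basis is in hand, the minor-graph statement is literally the equivalence of properties (1) and (4) there. So the real work is to (a) produce a single orthonormal basis that is simultaneously a Ricci eigenvector basis and a basis of $D$-eigenvectors, (b) transport the equivalence of Theorem \ref{hebervar} in both directions, and (c) establish the containment $\Sigma_{(\mu_1,\ldots,\mu_n)} \subset \Theta_n$.

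For the forward direction I would first manufacture the basis. By Lemma 2.2 of \cite{heberinv}, the symmetric derivation $D$ commutes with the Ricci endomorphism $\Ric_\mu$; since both maps are self-adjoint for $Q$, they admit a common orthonormal eigenbasis $\calB=\{X_i\}$, which is then a Ricci eigenvector basis adapted to $D$ with $DX_i=\mu_i X_i$. With this $\calB$ fixed, the hypothesis that $D$ is a derivation is property (1) of Theorem \ref{hebervar}, and its property (4) says exactly that $S(\Lambda(\frakn,\calB))$ is a minor graph of $S(\Sigma_{(\mu_1,\ldots,\mu_n)})$; invoking that equivalence finishes this half.

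The converse is the same equivalence run backward and needs no input from Heber's lemma. If $S(\Lambda(\frakn,\calB))$ is a minor graph of $S(\Sigma_{(\mu_1,\ldots,\mu_n)})$, that is property (4) of Theorem \ref{hebervar}, so property (1) gives that the endomorphism $D$ defined by $DX_i=\mu_i X_i$ is a derivation; it is symmetric because it is diagonal in the orthonormal basis $\calB$.

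The step I expect to require the most care --- and the main obstacle --- is the containment $\Sigma_{(\mu_1,\ldots,\mu_n)}\subset\Theta_n$, since it is exactly where positivity of the eigenvalues enters. I would order $\calB$ so that $\mu_1\le\cdots\le\mu_n$ and use that the $\mu_i$ are positive (the normalization available for such derivations, cf.\ Theorem C of \cite{heberinv}), so that the $D$-eigenspaces form an $\boldR^+$-grading with each $X_i$ lying in a graded piece. Then the Lemma preceding Corollary \ref{goodbasis} applies: for $(i,j,k)\in\Sigma_{(\mu_1,\ldots,\mu_n)}$ one has $\mu_k=\mu_i+\mu_j$ strictly larger than both $\mu_i$ and $\mu_j$, which together with the ordering forces $i<j<k$ and hence $(i,j,k)\in\Theta_n$. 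It is worth flagging that positivity is essential here: a zero or negative eigenvalue can produce a triple of $\Sigma_{(\mu_1,\ldots,\mu_n)}$ with a repeated or out-of-order index, so the clean containment holds only once the eigenvalues have been normalized to be positive.
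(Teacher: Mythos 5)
Your two main directions are exactly the paper's argument: the paper's proof of this theorem is the paragraph immediately preceding it, which uses Lemma 2.2 of \cite{heberinv} to produce a Ricci eigenvector basis adapted to $D$ and then invokes Theorem \ref{hebervar} in both directions (the paper's citation of Theorem \ref{nilsolitonconstraint} for the converse appears to be a slip for Theorem \ref{hebervar}, whose equivalence of properties (1) and (4) is what you use). On that part there is nothing to correct.

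The genuine gap is in your third step, the containment $\Sigma_{(\mu_1,\ldots,\mu_n)} \subset \Theta_n.$ You are right that positivity of the eigenvalues is what drives it, but the positivity you appeal to is not available under the stated hypotheses: Theorem C of \cite{heberinv} concerns the particular derivation $\Ric - \beta \Id$ of a nilsoliton metric Lie algebra, whereas here $D$ is an arbitrary symmetric derivation, and such a derivation need not have positive spectrum, nor can it be rescaled into one. Concretely, on the Heisenberg algebra $[X_1,X_2] = X_3$ with the standard inner product, $D = \diag(1,-1,0)$ is a symmetric derivation; the triple $(1,3,1)$ lies in $\Sigma_{(1,-1,0)}$ because $\mu_1 + \mu_3 = \mu_1,$ and it lies in $\Upsilon_3 \setminus \Theta_3,$ so the containment fails. (As you note, one also needs the basis ordered so that the $\mu_i$ are nondecreasing, which you can legitimately arrange since you construct $\calB$ yourself in the forward direction.) So your closing ``flag'' is accurate, but it concedes the gap rather than closing it: the containment is false for general symmetric derivations, and no normalization justified by the hypotheses rescues it. For what it is worth, the paper is no better off here --- the theorem is offered as a summary of a preceding discussion that never mentions $\Theta_n$ at all --- and the clause is true only under the additional assumption, satisfied in all of the paper's applications (e.g.\ $v_D = (1,2,\ldots,n)$ in Example \ref{Sigma_n}), that $D$ is positive definite and $\calB$ is ordered by increasing eigenvalue.
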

 For example, the characteristically nilpotent Lie algebra defined by
Dixmier and Lister in Example \ref{dixmier-lister} does not admit
semisimple derivation, so its generalized Dynkin diagram $S(\Lambda)$
can not be a minor graph of $S(\Sigma_{(\mu_1, \ldots, \mu_n)})$ for
any $(\mu_1, \ldots, \mu_n)$ in $\boldR^n.$

The study of the graphs $\Sigma_{(\mu_1, \ldots, \mu_n)}$ and
$\Theta_n$ has two applications to nilsolitons.  First, one can show
that a generalized Dynkin diagram $S(U)$ is realized as the
generalized Dynkin diagram for a nilsoliton metric Lie algebras by
finding $S(U)$ as a minor graph of some $S(\Sigma_{(\mu_1, \ldots,
\mu_n)}),$ finding a nilsoliton weighting on $S(U)$ by solving $Uv =
\boldone$, defining an algebra using structure constants from the
weighting, and checking the Jacobi identity with Theorem
\ref{jacobicondition}.  Second, one can look for graph-theoretic or
combinatorial obstructions for a metric nilpotent Lie algebra to admit
a symmetric derivation by finding properties of the graphs
$S(\Sigma_{(\mu_1, \ldots, \mu_n)})$ that give restrictions on the
types of minor graphs they can have.

In the next example we describe the generalized Dynkin diagrams for
the sets $\Sigma_n := \Sigma_{(1,2,\ldots, n)}$ with multiplicities
zero or one.
\begin{example}\label{Sigma_n} 
 Let $(\frakn,Q)$ be an inner product space of dimension $n$ with
orthonormal basis $\calB = \{X_i\}_{i = 1}^n.$ Let $\Sigma_n$ be the
set of integer triples defined by
\begin{align*} 
\Sigma_n &= \Sigma_{(1,2,\ldots,n)} \\
&= \{(i,j,i + j) \, | \, 1 \le i < j  \le n, i + j \le n \}.\end{align*}
Let $U_{\Sigma_n}$ denote the Gram matrix for $\Sigma_n$ and let
 $S(\Sigma_n) = S(U_{\Sigma_n})$ denote the generalized Dynkin diagram
 associated to $U_{\Sigma_n}.$

It is easy to check that $S(\Sigma_3)$ is the Dynkin diagram of type
$A_1,$ $S(\Sigma_4)$ is the Dynkin diagram of type $A_1 \oplus A_1,$
and $S(\Sigma_5)$ is the Dynkin diagram of type $A_4.$ When $n \ge 6,$
the Gram matrix always has entries of minus one coming from triples of
the form $(i,j,k)$ and $(l,k,m)$ with $l \ne, i, l \ne j$ in
$\Sigma_n$ such as $(1,3,4)$ and $(2,4,6)$ in $\Sigma_6.$

The generalized Dynkin diagram for $\Sigma_6$ is in Figure
\ref{Sigma6}.  It was shown in \cite{mcdanielpayne05} that $\Delta_6$
admits a one-parameter family of nilsoliton weightings with $\beta =
-2,$ and the Jacobi condition from the two dotted edges as described
by Theorem \ref{jacobicondition} gives a constraint so that up to
scaling, there is a unique nilsoliton metric $(\frakn,Q)$ with
$\Lambda(\frakn,\calB)= \Sigma_6$ for a Ricci eigenvector basis
$\calB.$
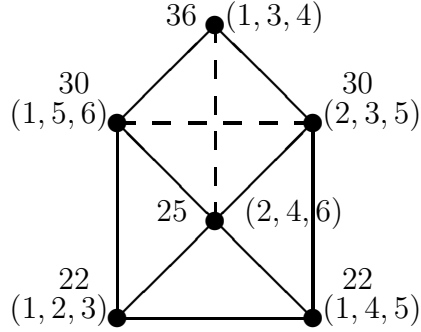
\begin{figure}
\begin{center}
\setlength{\unitlength}{1.3mm}
\begin{picture}(20,30)
\thicklines \put(0,0){\circle*{2}} \put(-11,0){$(1,2,3)$}
\put(-6,3){$22$} \put(20,0){\circle*{2}} \put(23,3){$22$}
\put(21,0){$(1,4,5)$} \put(0,20){\circle*{2}} \put(-11,20){$(1,5,6)$}
\put(-6,23){$30$} \put(20,20){\circle*{2}} \put(21,20){$(2,3,5)$}
\put(23,23){$30$} \put(10,10){\circle*{2}} \put(13,10){$(2,4,6)$}
\put(4,10){$25$} \put(10,30){\circle*{2}} \put(11,30){$(1,3,4)$}
\put(5,30){$36$} \put(0,0){\line(1,0){20}}
\dashline[+30]{2}(10,10)(10,30) \put(0,0){\line(0,1){20}}
\put(0,0){\line(1,1){20}} \put(20,0){\line(-1,1){20}}
\dashline[+30]{2}(0,20)(20,20) \put(20,0){\line(0,1){20}}
\put(0,20){\line(1,1){10}} \put(20,20){\line(-1,1){10}}
\end{picture}\end{center}
\caption{The generalized Dynkin diagram $S(U)$ for $\Sigma_6$ with a
nilsoliton weighting }\label{Sigma6} 
\end{figure}
\end{example}

\section{The solution space to $Uv=\boldone$}\label{solutions}
There are four distinct possibilities
 for the solution space of a linear system $Uv=-2\beta \onevector{m}$
 arising from Theorem \ref{mainthm}.  Solution spaces of the first
 four of the possible types occur: the first type in Example
 \ref{concreteexample1}, the second types and third types 
from horospheres for a quaternionic
 hyperbolic space and  the Cayley plane  in Examples \ref{quaternionic}
and \ref{cayley}, 
and the fourth type in
 Example \ref{dixmier-lister}.  As symmetric Cartan matrices of finite
 and affine type have been classified, it is not hard to list all
 systems of finite and affine type and to find solutions to $Av = \mu
 \boldone$ in each case. See page 54 of \cite{kac90} for the solutions
 in the affine case, presented as weighted Dynkin diagrams.

\bigskip

\noindent{\em Proof of Theorem \ref{solsp}.}  Substituting $U = cA + d
\onematrix{m}{m}$ into $Uv = \lambda \onevector{m}$ and using the fact
that $\onematrix{m}{m} v = (\onevector{m} \cdot v) \onevector{m},$ we
derive the equivalent equation
\begin{equation}\label{justabove}
Av = \frac{\lambda - d (\onevector{m} \cdot v) }{c} \, \onevector{m}.
\end{equation}
Viewing $\lambda$ and $\mu$ as parameters, the two families of linear
equations $Uv = \lambda \onevector{m}$ and $Av = \mu \onevector{m} $
have solutions that are in bijective correspondence: if $v$ is a
solution to $Uv = \lambda \onevector{m},$ then $v$ is a solution to
$Av = \mu \onevector{m} $ with parameter
\[ \mu = M(\lambda,v) =  \frac{\lambda - d (\onevector{m} \cdot v)  }{c},\]  
and if $v$ is a solution to $Av = \mu \onevector{m},$ then $v$ is a
solution to $U v = \lambda \onevector{m}$ with parameter
\[ \lambda=  L(\mu,v) =    c \mu + d (\onevector{m} \cdot v)   . \]
If $Uv=\lambda\onevector{m},$ then $L(M(\lambda,v),v) = \lambda$ and
if $Av = \mu \onevector{m},$ then $M(L(\mu,v),v)=\mu.$

By Theorem \ref{kac4-3}, there are three mutually exclusive cases for
the generalized Cartan matrix $A$: it is of finite type, affine type,
or indefinite type.  We consider each case in turn.

First suppose that $A$ is of finite type.  By Theorem \ref{kac4-3},
$A$ is invertible, so the equation $Av = \onevector{m}$ has a unique
solution $v_0,$ and as $Av_0 = \onevector{m}$ is positive, $v_0$ is
positive.  As the vector $v_0$ is a solution to $Av = \mu
\onevector{m}$ with $\mu = 1,$ it is a solution to $Uv = \lambda
\onevector{m}$ with $\lambda = L(1,v_0) = c + d (\onevector{m} \cdot
v_0).$ The positivity of $c$ and $d$ and the fact that $v_0 > 0$ force
$L(1,v_0)$ to be positive.  Rescaling gives a solution $-
\frac{2\beta}{L(1,v_0)} v_0$ to $U v = -2\beta \onevector{m}.$ By
hypotheses $\beta < 0,$ so $-\frac{2\beta}{L(1,v_0)} v_0$ is a
positive vector.  It is the unique solution to $U v = -2\beta
\onevector{m}$; were it not unique, there would exist a vector $v$ in
the solution space, not a multiple of $v_0,$ and sufficiently close to
$v_0$ that $M(-2\beta,v) \ne 0$ ($M(-2\beta,v_0) = 1$).  Then
$\frac{1}{M(-2\beta,v)} v$ would be a second solution to $Av=
\onevector{m}$, contradicting the nonsingularity of $A.$

Suppose that $A$ is of affine type.  By Theorem \ref{kac4-3}, there
exists positive $v_0$ with $Av_0 = 0.$ Then $v_0$ is a solution to $Av
= \mu \onevector{m}$ with $\mu = 0,$ making $v_0$ a solution to $Uv =
\lambda \onevector{m}$ with $\lambda = L(0,v_0) = d (\onevector{m}
\cdot v) > 0.$ Rescaling to $v = -\frac{2cv\beta}{L(0,v_0)} v_0$ gives
a positive solution $v$ to Equation $Uv = -2\beta \onevector{m}.$ Now
we show that the solution $v_0$ is unique.  Suppose that $v_1$ is
another solution to $Uv = -2\beta \onevector{m}.$ Then $v_1$ is a
solution to $Av = \mu \onevector{m}$ with $\mu = M(-2\beta,v_1).$ If
$\mu \ge 0,$ then $Av_1 \ge 0,$ and Theorem \ref{kac4-3} implies that
actually $Av_1 =0.$ If $\mu \le 0;$ then $A(-v_1) \ge 0,$ by again by
Theorem \ref{kac4-3}, $A(-v_1)=0.$ Thus $v_1$ lies in the
one-dimensional kernel of $A$ and must be a multiple $k$ of $v_0.$
Then $U(v_0) = -2\beta \onevector{m} = U(k v_0)$ and the nonzero
vector $U(v_0)$ is a multiple $k$ of itself.  Hence $k=1$ and $v_0 =
v_1.$

Now suppose that $A$ is of indefinite type.  We must show that if $Uv
= -2\beta \onevector{m}$ is consistent and there exists a positive
solution, then Case (Ind1) holds.  First we assume that $Uv = -2\beta
\onevector{m}$ is consistent, and we show that $\ker U \subset \ker
A.$ Recall that a matrix equation $Bx=y$ is consistent if and only if
$y$ is in $\range B = (\ker B^T)^\perp,$ so $\ker U \subset
\onevector{m}^\perp.$ Suppose that $v \in \ker U.$ Then $v$ is a
solution to $Uv = \lambda \onevector{m}$ with $\lambda = 0.$ The
one-to-one correspondence between solution spaces makes $v_0$ a
solution to $Av = \mu \onevector{m}$ with $\mu = M(0,v_0) = 0.$ Thus,
$v \in \ker A.$

Let $v_0$ be a fixed positive solution to $Uv = -2\beta
\onevector{m}.$ The bijective correspondence of solution spaces makes
$v_0$ a solution to $Av = \mu \onevector{m}$ with $\mu =
M(-2\beta,v_0).$ The complete solution space to $Av = \mu
\onevector{m}$ with $\mu = M(-2\beta,v_0)$ is $\{ v_0 + w \, | \, w
\in \ker A \}.$

Any vector $v_0 + w$ in this set is a solution to $Uv = \lambda
\onevector{m}$ with $\lambda = L(M(-2\beta,v_0),v_0 + w).$ Since
$L(M(-2\beta,v_0),v_0) = -2\beta > 0,$ by continuity, for $w$ in a
small neighborhood $U$ of the zero vector in $\ker A$,
$M(L(-2\beta,v_0),v_0 + w) > 0.$ Rescaling, we find that the solution
space to the system $Uv = -2\beta \onevector{m}$ contains
\[ \left\{ \frac{-2\beta}{L(M(-2\beta,v_0),v_0 + w)}(v_0 + w)  \, 
| \, w \in U \right\}.\] This is a space of the same dimension as
 $\ker A.$ Hence, $\dim \ker A \le \dim \ker U,$ so $\ker A = \ker U.$
 
In the indefinite case, the fact that $\ker A \subset
\onevector{m}^\perp$ implies that the set of nonnegative solutions is
bounded and forms a simplex of dimension $\ker A$.

Now assume that $U$ is the Gram matrix for a metric nilpotent Lie
algebra $(\frakn,Q)$ with respect to some basis $\calB.$ Let $Y$ be
the $m \times n$ root matrix for $\Lambda(\frakn,\calB).$ By
definition, $U = Y^T Y.$ We will show that the kernel of the symmetric
matrix $U$ is contained in $\onevector{m}^\perp.$ Suppose that $v$ is
in the kernel of $U.$ The kernel of $U$ is equal to the kernel of
$Y^T;$ hence $v$ is in the kernel of $Y^T.$ Then
\[  \onerowvector{n} Y^T v =  
\onerowvector{n} \zerovector{n} = \zerovector{1}.
\]
By definition, the rows of $Y$ are the structure vectors $Y_{ij}^k =
\varepsilon_i^T + \varepsilon_j^T - \varepsilon_k^T$ for $(i,j,k)$ in
$\Lambda.$ The product of the row vector $\onerowvector{n}$ and any
column of $Y^T$ is one, so $\onerowvector{n} Y^T = \onerowvector{m}.$
Substituting, we conclude that $\onerowvector{m} v = \zerovector{1}$
as desired. \qed

\bigskip

\section{All soliton metrics in a class $\Omega_\Lambda$}\label{proof of 
allnilsolitons} Here is the proof of Theorem \ref{allnilsolitons}.
\begin{proof}
Fix a subset $\Lambda$ of $\Theta_n$ and the root matrix $Y$ and Gram
matrix $U$ for $\Lambda.$ As $\Lambda \subset \Theta_n,$ the matrix $A
= 2U - 4\onematrix{m}{m}$ is a generalized Cartan matrix by Theorem
\ref{gramproperties}.

Suppose that $U$ is indecomposable.  Theorem \ref{solsp} then applies
 with $U = \half A + 2 \onematrix{m}{m}$.  In Cases (Fin) and (Aff) of
 the theorem, there is a unique positive solution $[\alpha^2]$ to $Uv
 = -2\beta \onevector{m}.$ There are $2^m$ different $\mu$ with this
 structure vector, one for each choice of sign in each entry of
 $[\alpha^2].$ If we are in Case (Ind1) of Theorem \ref{solsp}, $U$ is
 of indefinite type and there exists a solution $v_0$ to
 $Uv=\onevector{m}$ with positive entries.  The set of solutions to
 $Uv=\onevector{m}$ in the cone $\{ v \in \boldR^m \, | \, v \ge 0 \}$
 is a simplex of the same dimension as $\ker A.$ Taking square roots
 entry-wise and allowing all sign choices gives $2^m$ simplices of
 $\mu$ so that $(\frakn_\mu,Q)$ satisfies $U[\alpha^2] = -2\beta
 \onevector{m}$.  Only the interiors of these simplices lie inside
 $\Omega_\Lambda.$ In Case (Ind3) of Theorem \ref{solsp} there are no
 $\mu$ in $\Omega_\Lambda$ whose structure vector is a solution to $Uv
 = -2\beta \onevector{m}.$

By Equation \eqref{riccivector}, if $(\frakn_\mu,Q)$ has structure
vector $v_0+v,$ with $v$ in $\ker A = \ker Y^T,$ then the Ricci vector
is
\[ \bfRic_\mu^\calB = -\frac{1}{2}Y^T (v_0 + v) = -\frac{1}{2} Y^T  v_0. \]
In other words, for all metric algebras $(\frakn_\mu,Q)$ with
structure vector $v_0 + v$ that satisfy the nilsoliton condition, the
Ricci vector is the same.  Since $\calB$ is a common Ricci eigenvector
basis for all such $(\frakn_\mu,Q)$, once the Ricci vector is
constant, all the nil-Ricci endomorphisms agree.

If $U$ is decomposable, after permuting the basis vectors, it can be
written as the block diagonal sum of indecomposable Gram matrices
$U_1, U_2, \ldots, U_r.$ With respect to the new basis, the vector
$\onevector{m}$ is still written as $\onevector{m}.$ The solution
space to $U v = \onevector{m}$ is the product of the solution spaces
to the systems $U_i v = \onevector{m_i},$ where $U_i$ is $m_i \times
m_i$ and $i = 1, \ldots,r.$ If $U$ is nonsingular, then each $U_i$ is
nonsingular with unique solution $v_i$ to $U_i v = \onevector{m_i},$
making $v = v_1 \times \ldots \times v_r$ the unique solution to $Uv =
\onevector{m}.$ If $U$ is nonsingular and there is some $i$ so that
$U_i v = \onevector{m_i}$ has no solution with positive entries, then
$Uv = \onevector{m}$ has no solution with positive entries.  If $U$ is
nonsingular and $U_i v = \onevector{m_i}$ has positive solutions for
all $i,$ then the solution space is the nontrivial product of the
solution spaces to the equations $U_i v = \onevector{m_i}.$ The
product of simplices is a simplex and the dimension of a product is
the sum of the dimensions of the factors, so the theorem holds for the
decomposable case.  When $U$ is decomposable, and $(\frakn,Q)$ is
nilsoliton with nilsoliton constant $\beta,$ then $(\frakn_i,Q_i)$ is
also nilsoliton with nilsoliton constant $\beta,$ where $\frakn_i$ is
the subspace for the submatrix $U_i$ and $Q_i$ is the restriction of
$Q$ to that subspace.  The Ricci vector for $(\frakn,Q)$ is obtained
by concatenating the Ricci vectors $\bfRic_i$ for $(\frakn_i,Q_i).$
Thus, the constancy of the Ricci vector for nilsoliton metrics in the
same equivalence class holds in the decomposable case also.   
\end{proof}

As a corollary to Theorem \ref{allnilsolitons}, we find that any time
the number of nontrivial structure constants $\alpha_{ij}^k$ with $i <
j$ exceeds the dimension of a nilsoliton $(\frakn_\mu,Q),$ there are
continuous families of metric algebras around $(\frakn_\mu,Q)$ also
satisfying the nilsoliton condition.  The Jacobi identity may prohibit
members of this continuous family from being nilsoliton metric Lie
algebras.
\begin{corollary}\label{m>n} 
Let $(\frakn,Q)$ an $n$-dimensional inner product space and let
$\calB$ be a orthonormal basis for $\frakn$.  Let $\Omega_\Lambda$ be
a nontrivial level set for $\mu \mapsto \Lambda$ so that $\Lambda
\subset \Theta_n.$ and $ \| \Lambda \| > n.$ Let $\beta < 0.$ Then the
Gram matrix $U$ for $\Omega$ is singular, and either $\Omega_\Lambda$
contains no $\mu$ satisfying $Uv = -\beta \onevector{m}$ or there is a
continuous family of dimension at least $m - n$ of $\mu$ in
$\Omega_\Lambda$ so that $(\frakn_\mu,Q)$ satisfies $Uv = -\beta
\onevector{m}.$
\end{corollary}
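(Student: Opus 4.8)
The plan is to obtain this as a direct consequence of Theorem~\ref{allnilsolitons}, supplemented by a single rank count. Before applying that theorem I would verify its hypothesis is met: since $\Lambda \subset \Theta_n$ consists of triples $(i,j,k)$ with $i<j<k$, each such triple satisfies $i<j$, $k \neq i$, and $k \neq j$, so $\Lambda \subset \Psi_n$ and hence $\Omega_\Lambda \subset \Psi_n$, exactly as Theorem~\ref{allnilsolitons} requires.

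First I would establish the singularity of $U$. The root matrix $Y$ for $\Lambda$ is $m \times n$ with $m = \|\Lambda\| > n$, so $\operatorname{rank} Y \le n$. By property~(4) of Theorem~\ref{gramproperties}, $\operatorname{rank} U$ equals the number of independent root vectors, namely $\operatorname{rank} Y \le n < m$; thus $U$ is singular. Rank--nullity then gives $\dim \ker U = m - \operatorname{rank} U \ge m - n \ge 1$.

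Next I would feed this into the trichotomy of Theorem~\ref{allnilsolitons}. Because $U$ is singular, the nonsingular alternative of that theorem is ruled out, leaving exactly two possibilities for the set of $\mu \in \Omega_\Lambda$ whose structure vector solves the relevant system $Uv = -2\beta \boldone$. If that system has no positive solution, the set is empty, giving the first disjunct of the corollary. Otherwise the system has a solution $v > 0$, and the theorem describes the set as a finite disjoint union of interiors of $2^m$ simplices, each of dimension $\dim \ker U$. Combining with the bound $\dim \ker U \ge m - n$ yields a continuous family of dimension at least $m - n$, which is the second disjunct.

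I do not anticipate any genuine obstacle: the geometric content is entirely carried by Theorem~\ref{allnilsolitons}, and everything else is linear algebra. The only points needing attention are the bookkeeping that matches the corollary's hypothesis $\Lambda \subset \Theta_n$ to the cited theorem's hypothesis $\Omega_\Lambda \subset \Psi_n$ via the inclusion $\Theta_n \subset \Psi_n$, and the identification of the simplex dimension $\dim \ker U$ with the lower bound $m-n$ through the rank of the root matrix.
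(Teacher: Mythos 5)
Your proposal is correct and follows essentially the same route as the paper: bound $\operatorname{rank} U$ by $n$ via property~(4) of Theorem~\ref{gramproperties} (the root vectors lie in $\boldR^n$), conclude singularity from $m = \|\Lambda\| > n$, and invoke Theorem~\ref{allnilsolitons} to get the dichotomy of an empty solution set or a family of dimension $\dim\ker U \ge m-n$. The extra bookkeeping you include (the inclusion $\Theta_n \subset \Psi_n$ and explicitly ruling out the nonsingular case) is implicit in the paper's shorter argument but changes nothing substantive.
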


\begin{proof} 
The rank of $U$ is equal to the number of independent root vectors.
The root vectors are $1 \times n$ in size, so the maximal rank of $U$
is $n.$ By Theorem \ref{allnilsolitons}, the set of $\mu$ in
$\Omega_\Lambda$ that satisfy $U [\alpha^2] = -2\beta \onevector{m}$
is either empty or of dimension $\ker U = m - \rank U \ge m - n ,$
where $m = \| \Lambda\|.$   \end{proof}

 In Example \ref{cayley} we
 will have $(m,n)=(28,15)$ and an eight-parameter subfamily of
 solutions.  The  nilpotent metric Lie algebra
 $(\frakn,Q)$ for quaternionic space $H^n{\boldH}$
of dimension $4n$ has $(m,n)=(6n,4n+3)$ in its most natural 
representation (as in Example \ref{quaternionic}), therefore for $n \ge 2$
there will be continuous families of nilsolitons around $(\frakn,Q)$
for $n \ge 2.$ 
In Example \ref{quaternionic}, we will have $(m,n)=(12,11)$ and a
 two-parameter subfamily of solutions. 
A similar computation for $H^3(\boldH)$ not included here 
yields a four-parameter family
of Einstein solvmanifolds around quaternionic hyperbolic space of
dimension sixteen.

\section{Examples}\label{examples}

\subsection{A characteristically  nilpotent example}
The next example is of Case (Ind2) in Theorem \ref{solsp}, when
  solutions to $Uv = \boldone$ exist, but the solution vectors never
  have all entries positive.  The example, due to J.\ Dixmier and W.\
  G.\ Lister, was the first example of a nilpotent Lie algebra
  with a nilpotent derivation algebra.  A Lie algebra $\frakg$ so that
  the Lie algebra of derivations of $\frakg$ is nilpotent is called
  {\em characteristically nilpotent} (see \cite{khakimdjanov02} for a
  survey).  Such a Lie algebra admits no semisimple derivations, and
  hence can not admit a nilsoliton metric.  All nilpotent structures
  of dimension six or less admit nilsoliton metrics (see
  \cite{lauret02} and \cite{will03} for a classification), and the
associated derivation $\Ric - \beta \Id$ is nontrivial and 
semisimple.  The lowest dimension in which 
  characteristically nilpotent Lie algebras occur is
  (\cite{favre72}).
\begin{example}\label{dixmier-lister} (\cite{dixmierlister})
 Let $(\frakn,Q)$ be the metric Lie algebra with orthonormal basis
$\calB = \{X_i\}_{i=1}^8$ and nontrivial structure constants encoded
in the Lie brackets
\begin{alignat*}{5}
[X_1,X_2] &= X_5 \qquad &[X_1,X_3] &= X_6 \qquad &[X_1,X_4] &= X_7 \\
[X_1,X_5] &= -X_8 \qquad &[X_2,X_3] &= X_8 \qquad &[X_2,X_4] &= X_6 \\
[X_2,X_6] &= -X_7 \qquad &[X_3,X_4] &= -X_5 \qquad &[X_3,X_5] &= -X_7
\\ \, & \, &[X_4,X_6] &= -X_8 & \, & \, 
\end{alignat*}
By Corollary \ref{orthogonal}, the basis $\calB$ is a Ricci
eigenvector basis.  We find using MAPLE that $U$ is singular with the
vectors \begin{align*} v_1 &= (1,-1,0,0,0,0,-1,0,1,0)^T \qquad
\text{and} \\ v_2 &= (0,1,0,-1,0,0,0,-1,0,1)^T \end{align*} spanning
$\ker U = \ker A = \ker Y^T.$ The general solution to $Uv =
\onevector{10}$ is
\[ v= 
(-\half,\smallfrac{5}{2},-7,6,-7,
\smallfrac{5}{2},6,\smallfrac{11}{2},0,0)^T + s v_1 + t v_2;\] none of
these solutions satisfy $v > 0,$ as the third coordinate is always
negative, so $A$ is of
indefinite type by Theorem \ref{kac4-3}.  Note that $v_1$ and $v_2$
are orthogonal to $\onevector{10}$ since $Uv = \onevector{10}$ and
$Av=\onevector{10}$ are consistent.
\end{example}

\subsection{The class $L_n$ of filiform Lie algebras}\label{Lnsubsection}

Let $\frakg$ be a Lie algebra.  The descending central series of
$\frakg$ is defined by $\frakg^{(1)} = \frakg$ and $\frakg^{(j + 1)} =
[\frakg, \frakg^{(j)}]$ for $j > 1.$ The Lie algebra $\frakg$ is
nilpotent if and only if there is an integer $k$ so that
$\frakg^{(k)}$ is trivial.  If $k$ is the smallest integer so that
$\frakg^{(k + 1)}$ is trivial, then $\frakg$ is said to be $k$-{\em
step} nilpotent.

 A $k$-step nilpotent Lie algebra of dimension $n$ is called {\em
filiform} if $k + 1 = n.$ See \cite{khakimdjanov02} for a survey of
filiform and characteristically nilpotent Lie algebras.  The
$(n+1)$-dimensional filiform nilpotent Lie algebra $L_n$ is defined
with respect to the basis $\{X_i\}_{i=0}^n$ by the bracket relations
\[
[X_0,X_i] = X_{i+1} \qquad \text{\rm for \,} i=1,\ldots,n-1 .\] J.\
Lauret has shown using variational methods that every Lie algebra in
this family admits an inner product $Q$ so that $(L_n,Q)$ is a
nilsoliton metric Lie algebra.  We give an alternate algebraic proof
of this fact.

\begin{theorem}[\cite{lauret02}]\label{Ln} For all $n,$ the filiform 
nilpotent Lie algebra $L_n$ admits a unique nilsoliton metric.
\end{theorem}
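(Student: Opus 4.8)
The plan is to recognize $L_n$ as the single-Jordan-block instance of the family analyzed in Example \ref{Ln example, part one} and then to extract both existence and uniqueness from the finite-type case of Theorem \ref{solsp}. Setting $\frakm = \langle X_1, \ldots, X_n \rangle$, the ideal $\frakm$ is abelian of codimension one and $\ad_{X_0}|_\frakm$ is a single nilpotent Jordan block, so with respect to $\calB = \{X_i\}_{i=0}^n$ we have $\Lambda(L_n,\calB) = \{(0,i,i+1) \mid 1 \le i \le n-1\}$, and Corollary \ref{orthogonal} shows (as already noted in Example \ref{Ln example, part one}) that $\calB$ is a Ricci eigenvector basis. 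A direct computation gives the $(n-1)\times(n-1)$ Gram matrix $U$ with diagonal entries $3$, first off-diagonal entries $0$, and all remaining entries $1$; hence $A = U - \onematrix{m}{m}$ (where $m = n-1$) is the tridiagonal generalized Cartan matrix of finite type $A$ identified in Example \ref{Ln example, part one}, which is irreducible.

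For existence, fix $\beta < 0$ and apply Theorem \ref{solsp} with $U = A + \onematrix{m}{m}$, so that $c = d = 1$. Because $A$ is irreducible of finite type we are in Case (Fin): the system $U v = -2\beta \onevector{m}$ has a unique solution $v$ and $v > 0$. Interpreting $v$ as the structure vector $[\alpha^2]$ and taking positive square roots yields structure constants $\alpha_{0i}^{i+1} = \sqrt{v_i}$, and by Theorem \ref{mainthm} the associated metric algebra satisfies the nilsoliton condition with constant $\beta$. Since $U$ has no entries equal to $-1$, Corollary \ref{jacobicondition-cor} guarantees the Jacobi identity automatically, so this metric algebra is a genuine Lie algebra; rescaling $X_1, \ldots, X_n$ identifies it with $L_n$. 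This produces a nilsoliton metric on $L_n$.

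For uniqueness up to scaling, the finite-type conclusion already shows $v$ is the unique positive solution for this fixed shape, and replacing $\beta$ by $t\beta$ merely scales $v$; thus any two nilsoliton metrics realizing $\Lambda = \{(0,i,i+1)\}$ are homothetic. The step I expect to be the main obstacle is verifying that \emph{every} nilsoliton metric on $L_n$ arises from this shape, i.e.\ that no other Ricci eigenvector basis produces a different admissible set of triples. I would settle this by studying the semisimple derivation $D = \Ric - \beta\Id$: the relations $[X_0,X_i] = X_{i+1}$ force any diagonal semisimple derivation of $L_n$ to have eigenvalues $d_i = d_1 + (i-1)d_0$, a two-parameter family whose positive member with the correct trace ratio is unique up to automorphism, so the induced $\boldR^+$-grading is the canonical filiform grading and the nonzero structure constants can only sit at the consecutive triples $(0,i,i+1)$. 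With the shape pinned down, Theorem \ref{allnilsolitons} in Case (Fin) forces the structure vector and completes uniqueness; alternatively one may invoke Lauret's general uniqueness-up-to-scaling result quoted in the introduction.
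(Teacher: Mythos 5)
Your proposal is correct and follows essentially the same route as the paper: the paper likewise fixes the orthonormal basis $\calB$, notes via Corollary~\ref{orthogonal} that it is a Ricci eigenvector basis, identifies $A = U - \onematrix{(n-1)}{(n-1)}$ as the finite-type Cartan matrix $A_{n-1}$, and invokes Theorem~\ref{solsp} (Case (Fin)) together with Theorem~\ref{mainthm} to obtain the unique positive structure vector (the paper additionally solves the recursion explicitly, getting $v_i = \tfrac{\nu}{2}i(n-i)$, which is convenient later but not needed for the statement). For uniqueness the paper does exactly what your stated fallback does --- it cites Lauret's uniqueness theorem from \cite{lauret01a} --- and you should rely on that citation rather than on your sketched derivation-based argument, which as written does not handle degenerate cases such as $d_0 = d_1$, where the eigenvalues of $D = \Ric - \beta\Id$ coincide and the Ricci eigenvector basis need not align with the filiform basis.
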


\begin{proof} 
Take the inner product $Q$ on the vector space $\frakn = \myspan \{X_i
\}_{i=0}^n$ with respect to which $\calB = \{X_i\}_{i = 0}^n$ is
orthonormal.  We want to find nonzero values of $a_i$ such that the
bracket relations
\begin{equation}\label{newLn}
[X_0,X_i] = a_iX_{i+1} \qquad \text{\rm for \,} i=1,\ldots,n-1
\end{equation} define a nilpotent structure $\mu$ on $\frakn$ so that
$(\frakn_\mu, Q)$ is nilsoliton.  Writing the bracket relations with
respect to the basis $X_0^\prime= X_0, X_1^\prime = X_1, X_i^\prime =
\left( \Pi_{i = 1}^{i - 1} a_i \right) X_i$ for $i =2$ to $n$ demonstrates
that such a Lie algebra $\frakn_\mu$ is isomorphic to $L_n.$

By Corollary \ref{orthogonal}, $\calB$ is a Ricci eigenvector basis.
The Gram matrix $U$ for $(\frakn_\mu,Q)$ and $\calB$ and the
associated Cartan matrix $A = U - \onematrix{(n-1)}{(n-1)}$ are given
by
\[ U = \begin{bmatrix} 3 & 0 & 1  & \cdots & 1 \\
0 & 3 & 0 & \cdots & 1\\ 1 & 0 & 3 & \cdots & 1 \\ \vdots & \vdots &
\vdots & \ddots & \vdots \\ 1 & 1 & 1 & \cdots & 3
\end{bmatrix} \qquad \text{\rm and} \qquad  
A = \begin{bmatrix} 2 & -1 & 0 & \cdots & 0 \\ -1 & 2 & -1 & \cdots &
  0\\ 0 & -1 & 2 & \cdots & 0 \\ \vdots & \vdots & \vdots & \ddots &
  \vdots \\ 0 & 0 & 0 & \cdots & 2
\end{bmatrix}.
\] 
By Theorem \ref{mainthm}, $(\frakn_\mu,Q)$ is nilsoliton if and only
if $v = [\alpha^2]$ is a solution of $Uv = -2\beta \onevector{(n-1)}.$
Note that $A$ is the Cartan matrix of type $A_{n-1},$ so by Theorem
\ref{solsp}, the equation $U v = -2\beta \onevector{(n-1)}$ has a
unique positive solution for any $\beta < 0.$ By Theorem \ref{solsp},
these solutions are in bijective correspondence with solutions to the
systems $A v = \nu \onevector{(n-1)}$ for some $\nu > 0.$

Writing out the equations of the system $A v = \nu \onevector{(n-1)}$,
we find the recursion relation
\[  -v_{i - 1}+ 2v_i -  v_{i+1} = \nu \qquad (v_0 = v_{n} := 0)\]
which has solution $v_i = \frac{\nu}{2}i(n-i ).$ The solutions $v_i$
are positive for all $i = 1, \ldots, n-1,$ so we may let
\[ a_i = \sqrt{v_i}  = \sqrt{\frac{\nu}{2}i(n-i) }\] for $i=1,
\ldots, n-1$ to define a nilsoliton metric Lie algebra $(\frakn_\mu,
Q).$ Note that $a_i = a_{n-i};$ this will be used in the proof of
Theorem \ref{Qn}.  Normalizing $\nu$ to be equal to two, and using the
results of Example \ref{Ln example, part one}, we find that
\begin{align*} 
\ric(X_0,X_0) &= -\frac{1}{2} \sum_{i = 1}^{n-1} a_i^2 - a_{i-1}^2 =
-\frac{1}{2} \left[ n \left( \sum_{i = 1}^{n-1} i \right) - \sum_{i =
    1}^{n-1} i^2 \right] \\ &= -\smallfrac{1}{12}(n-1)n(n+1), \\
\ric(X_i,X_i) & = -\half(n+1) + i \text{\, for $1 \le i \le n,$}
\end{align*}
and that the nilsoliton constant $\beta$ is $
-\smallfrac{1}{12}(n-1)n(n+1) - 1.$

Uniqueness follows from the uniqueness of soliton metrics on nilpotent
Lie groups (\cite{lauret01a}).   \end{proof}

\subsection{Symmetric examples}

In Theorem J of (\cite{heberinv}), J.\ Heber showed that real and
complex hyperbolic spaces are isolated among Einstein solvmanifolds
and he predicted the existence of continuous families of Einstein
solvmanifolds around quaternionic hyperbolic space $H^m(\boldH)$ for
$m\ge 2$ and around the Cayley plane.  These families yield
continuous families of soliton nilmanifolds around horospheres
for the symmetric spaces.  
In this section we consider 
nilmanifolds associated to horospheres of 
rank one symmetric spaces of noncompact type, 
and we the find subfamilies of the families of soliton nilmanifolds 
associated to the Einstein solvmanifolds that
Heber predicted.  We also consider a horosphere for 
a higher rank symmetric space and find continuous families of soliton
metrics around it. 
  
  First we consider the the
Heisenberg algebra.
\begin{example}\label{heisenberg} 
Let $\frakh_n$ be the $(2n+1)$-dimensional Heisenberg algebra. The
 algebra $\frakh_n$ may be represented with basis \[ \calB = \{X_1,
 \ldots, X_n, Y_1, \ldots, Y_n, Z\} \] and nontrivial structure
 constants coming from the bracket relations $[X_i,Y_i] = Z$ for $i=1,
 \ldots n.$ We define the inner product $Q$ on $\frakh_n$ so that
 $\calB$ is orthonormal.  It is a Ricci eigenvector basis by Corollary
 \ref{orthogonal}.

The root vectors for $(\frakh_n,Q)$ with respect to $\calB$ are
linearly independent, and the $n \times n$ Gram matrix $U$ has $u_{ii}
= 3$ for $i = 1, \ldots, n$ and $u_{ij} = 1$ for $i \ne j.$ The graph
$S(U)$ is the complete graph on $n$ vertices, and if $A = U -
\onematrix{n}{n},$ the graph $S(A)$ is $n$ vertices with no edges.
The graph is regular so a constant weighting assigning the value of
one to each vertex satisfies Equation \eqref{weighting}, and hence
$(\frakh_n,Q)$ is a nilsoliton metric Lie algebra.  Because $A = U -
\onematrix{(2n+1)}{(2n+1)} = 2 \Id$ is nonsingular, by Theorem
\ref{solsp}, the weighting is unique.  No matter what sign choice we
make ($\pm 1$) for the structure constants, we get an algebra
isomorphic to $\frakh_n.$ Thus, $(\frakh_n,Q)$ is the only nilsoliton
metric Lie algebra inside its equivalence class $\Omega_\Lambda.$
\end{example}

  The nilmanifold corresponding to the metric Lie algebra
$(\frakh_n,Q)$ in the previous example is isometric to any horosphere
for complex hyperbolic space of real dimension $2n+2,$ a rank one
symmetric space of noncompact type.
 In the next example, we consider
a horosphere for quaternionic symmetric space.  C.\
Gordon and M.\ Kerr presented other continuous families around
quaternionic hyperbolic spaces in \cite{gordonkerr01} and in
\cite{kerr03}.
\begin{example}\label{quaternionic}  
Let $\frakv = \boldR^{8}$ and $\frakz = \boldR^3,$ and let
\[ \calB = \{V_1, iV_1, jV_1, kV_1, 
V_2, iV_2, jV_2, kV_2, iZ, jZ, kZ\} \] be a orthonormal basis for the
vector space $\frakn = \frakv \oplus \frakz$ in the obvious way.  Let
$\mu$ be an element of $\Lambda^2 \frakn^\ast \otimes \frakn$ with
nontrivial structure constants coming from the twelve bracket
relations
\begin{alignat*}{2}
[V_s,iV_s] &= \alpha_{1i}^i(s) iZ \qquad [jV_s,kV_s] =&
\alpha_{jk}^i(s)iZ \\ [V_s,jV_s] &= \alpha_{1j}^j(s)jZ \qquad
[iV_s,kV_s] =& -\alpha_{ik}^j(s)jZ \\ \qquad [V_s,kV_s]
&=\alpha_{1k}^k(s)kZ \qquad [iV_s,jV_s] =& \alpha_{ij}^k(s)kZ
\end{alignat*}
($s=1,2$).  By Corollary \ref{orthogonal}, $\calB$ is a Ricci
eigenvector basis for all possible values of these structure
constants.   After writing out the matrices $U$ and $A$ and using
Maple to solve $Uv = \onevector{12},$ we find a four parameter family
of solutions for squares of structure coefficients.
\begin{alignat*}{3}
\alpha_{1i}^i(1)^2 &= \alpha_{jk}^i(1)^2 &= \smallfrac{1}{10}& + s + t
\\ \alpha_{1j}^j(1)^2 &= \alpha_{ik}^j(1)^2 &= \smallfrac{1}{10}& - s
\\ \alpha_{1k}^k(1)^2 &= \alpha_{ij}^k(1)^2 &= \smallfrac{1}{10}& - t
\\ \alpha_{1i}^i(2)^2 &= \alpha_{jk}^i(2)^2 &= \smallfrac{1}{10}& - (s
+ t) \\ \alpha_{1j}^j(2)^2 &= \alpha_{ik}^j(2)^2 &= \smallfrac{1}{10}&
+ s \\ \alpha_{1k}^k(2)^2 &= \alpha_{ij}^k(2)^2 &= \smallfrac{1}{10}&
+ t
\end{alignat*}
 Notice that $\frakn_\mu$ is always two-step, so  the Jacobi identity
holds trivially for  any 
$\mu$ defined by any choice of structure constants.

It can be checked that the graph $S(U)$ has as its symmetry group the
dihedral group $D_6;$ therefore the solution space shares this
symmetry.  The Lie algebra structure for the horosphere of
quaternionic hyperbolic space of dimension twelve comes from setting
all parameter values equal to zero.  Solutions are positive when $|s|,
|t|$ and $|s+t|$ are less than $1/10.$ It may be checked that the
eigenvalues for $\Ric$ are $-3/20$ on $\frakv$ and $4/20$ on $\frakz$
and that $\beta = -1/2,$ so $D$ has eigenvalues $7/20$ and $14/20$ on
$\frakv$ and $\frakz$ respectively.  The Ricci endomorphism and
$D$ are the same for all the nilsoliton Lie algebras in
$\Omega_\Lambda,$ so  
the Einstein solvable extensions all have the same Ricci endomorphism.

By considering the values $\trace J_W^2,$ for unit 
$W \in \frakz,$ it is easy to 
see that varying $s$ and $t$ so that  $0 \le s \le t \le 1/10$ and
$s + t \le 1/10,$ 
defines a family of metric  Lie algebras that 
are not pairwise  isometrically isomorphic; hence, by \cite{wilson82}
the corresponding nilmanifolds are not isometric. Thus we
have found a two-parameter family of soliton metric Lie algebras.
\end{example}

J.\ Heber showed that there exists an 84-parameter family of Einstein
solvmanifolds around the Cayley plane with the symmetric metric.  Here
we analyze the system $Uv = [1]$ that would lead to
 members of that family having
same set of nonzero structure constants relative to a natural basis.
\begin{example}\label{cayley}  
Let $\frakv = \boldR^8$ and $\frakz = \boldR^7,$ and let
\[ \calB = \{ V_0, V_1, V_2, V_3, V_4, V_5, V_6, V_7, Z_1, Z_2, Z_3,
Z_4, Z_5, Z_6, Z_7 \} \] be an orthonormal basis for $\frakn = \frakv
 \oplus \frakz$ in the obvious way.  Consider a Lie bracket $\mu$ with
 nontrivial structure constants coming from the bracket relations
\[ [V_i,V_j] =  \alpha_{ij}^k Z_k \]
where $(i,j,k)$ ranges over the set
\begin{multline*}
 \{ (0,1,1), (2,4,1), (3,7,1), (5,6,1), (0,2,2), (1,4,2), (3,5,2), \\
(6,7,2), (0,3,3), (1,7,3), (2,5,3), (4,6,3), (0,4,4), (1,2,4),\\
(3,6,4), (5,7,4), (0,5,5), (1,6,5), (2,3,5), (4,7,5), (0,6,6), \\
(1,5,6), (2,7,6), (3,4,6), (0,7,7), (1,3,7), (2,6,7), (4,5,7) \}
\end{multline*}
By Corollary \ref{orthogonal}, $\calB$ is a Ricci eigenvector basis
for all metric Lie algebra structures of this form.  When all the
structure constants are equal, $\frakn_\mu$ is the Lie algebra of $N$
from the Iwasawa decomposition $KAN$ of the isometry group of the Cayley
plane.  After finding the root vectors, computing the Gram matrix $U,$
and using MAPLE to solve $U [\alpha^2] = \onevector{28}$ we get an
eight-dimensional set of solutions
\begin{align*}
 (\alpha_{01}^1)^2 &= \smallfrac{1}{18} + a + b +c + d + 2e \\
(\alpha_{24}^1)^2 &= \smallfrac{1}{18} + a + b - c - d - 2e \\
(\alpha_{37}^1)^2 = (\alpha_{56}^1)^2 &= \smallfrac{1}{18} - a - b \\
 (\alpha_{02}^2)^2 &= \smallfrac{1}{18} -a- 2e + f + h \\
 (\alpha_{14}^2)^2 &= \smallfrac{1}{18} -a + 2e - f- h \\
 (\alpha_{35}^2)^2 = (\alpha_{67}^2)^2 &= \smallfrac{1}{18} + a \\
 (\alpha_{03}^3)^2 &= \smallfrac{1}{18} - c - e - f + g \\
(\alpha_{17}^3)^2 &= \smallfrac{1}{18} -d - e + f - \half g + h \\
(\alpha_{25}^3)^2 = (\alpha_{46}^3)^2 &= \smallfrac{1}{18} + \half c +
\half d + e - \half g - \half h \\
 (\alpha_{04}^4)^2 &= \smallfrac{1}{18}-b + \half c - \half d - 2 e +
 f + \half g + \half h \\ (\alpha_{12}^4)^2 &= \smallfrac{1}{18} -b -
 \half c + \half d + 2 e - f - \half g - \half h \\ (\alpha_{36}^4)^2
 = (\alpha_{57}^4)^2 &= \smallfrac{1}{18} + b \\
 (\alpha_{05}^5)^2 &= \smallfrac{1}{18} - \half c - \half d + 2e +
\half g - \half h \\ (\alpha_{16}^5)^2 &= \smallfrac{1}{18}- \half c -
\half d - 2e - \half g + \half h \\ (\alpha_{23}^5)^2 &=
\smallfrac{1}{18} + c \\ (\alpha_{47}^5)^2 &= \smallfrac{1}{18} + d \\
  (\alpha_{06}^6)^2 &= \smallfrac{1}{18} + e \\ (\alpha_{15}^6)^2 &=
  \smallfrac{1}{18} - 3e \\ (\alpha_{27}^6)^2 = (\alpha_{34}^6)^2 &=
  \smallfrac{1}{18} + e \\
 (\alpha_{07}^7)^2 &= \smallfrac{1}{18} - f - g- h \\
 (\alpha_{13}^7)^2 &= \smallfrac{1}{18} + f \\ (\alpha_{26}^7)^2 &=
 \smallfrac{1}{18} + g \\ (\alpha_{45}^7)^2 &= \smallfrac{1}{18} + h
 \\
\end{align*}
To see that the metric nilpotent Lie algebras in the family 
defined by these solutions are
not all isomorphic, assume there is 
an isometric isomorphism $\phi$ between two
elements $(\frakn,Q)$ and $(\frakn^\prime,Q^\prime)$ of the family.  The map
$\phi$ must preserve the center $\frakz = \myspan \{Z_i\}_{i=1}^7$
and the orthogonal complement of the center
$\frakv = \myspan \{V_i\}_{i=0}^7.$  The   natural grading automorphism 
that is the identity on $\frakv$ and twice the identity on $\frakz$
defines gradings of   $\frakn$ and $\frakn^\prime.$
We use this grading to apply Theorem  \ref{Ricci-K} 
The set $S$ of values
 $K(X \wedge Y) =  - \frac{3}{4} \| \ad_XY \|^2$
for unit $X, Y \in \frakv,$ is the same for 
 $(\frakn_1,Q_1)$ and $(\frakn_2,Q_2).$  Therefore, the set $S$ is
an isometry invariant.  

Clearly the set $S,$ modulo rescaling, takes on infinitely many values
as the structure constants vary over the set of solutions we have found,
so there is  continuous family of soliton metrics on nilpotent Lie
algebras, pairwise nonhomothetic, defined by the solutions given 
above.    
\end{example}
  In the next example, we consider
a horosphere of maximal dimension for the rank $n-1$ symmetric space
$\calP_n = SL_n(\boldR)/SO(n).$
Let $\frakt_n$ denote the vector space of strictly upper triangular $n
 \times n$ matrices.  We use $\calB = \{ E_{ij} | 1 \le i < j \le n\}$
 as a basis, where $E_{ij}$ is the matrix with a one in the $(i,j)$
 position and zeroes elsewhere, and we let $Q$ be the inner product
 such that $\calB$ is orthonormal.  Let $\alpha_{ijk}$ denote the
 structure constant $\la [E_{ij},E_{jk}] , E_{ik} \ra,$ for $1 \le i <
 j < k \le n.$ The nonzero structure constants are indexed by
 $\Theta_n.$ Let $\mu_0$ be the element of $\Lambda^2 \frakt_n^\ast
 \otimes \frakt_n$ with $\alpha_{ijk} = 1$ for all $(i,j,k)$ in
 $\Theta_n;$ the nilmanifold corresponding to the metric Lie algebra
 $((\frakt_n)_{\mu_0},Q)$ is isometric to a horosphere for a regular
 geodesic in the symmetric space $\calP_n.$
\begin{theorem}\label{tn}
If $n \le 4,$ then $((\frakt_n)_{\mu_0},Q)$ is the only nilsoliton in
its equivalence class $\Omega_{\Lambda}$ up to scaling.  For $n > 4,$
there are continuous families of nonisometric nilsoliton metrics
around $((\frakt_n)_{\mu_0},Q)$ in its equivalence class
$\Omega_{\Lambda}.$
\end{theorem}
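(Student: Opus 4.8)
The plan is to compute the Gram matrix $U$ for $\Lambda=\Theta_n$, read off $\dim\ker U$, and feed this into Theorems \ref{allnilsolitons} and \ref{jacobicondition}. Ordering the basis $\{E_{pq}\}$ so that a triple $(i,j,k)\in\Theta_n$ records the bracket $[E_{ij},E_{jk}]=\alpha_{ijk}E_{ik}$, the root vector $Y_{ij}^k$ is $\varepsilon_{(ij)}+\varepsilon_{(jk)}-\varepsilon_{(ik)}$ in $\boldR^{\binom{n}{2}}$, which is exactly the simplicial boundary $\partial[i,j,k]$ of a $2$-face of the full simplex $\Delta$ on $\{1,\dots,n\}$. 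First I would use this, together with $H_1(\Delta)=0$ and $\rank\partial_1=n-1$, to compute $\rank(\text{root matrix})=\rank\partial_2=\dim\ker\partial_1=\binom{n}{2}-(n-1)$; by Theorem \ref{gramproperties} this equals $\rank U$, so $\dim\ker U=\binom{n}{3}-\binom{n}{2}+(n-1)=\binom{n-1}{3}$. A short telescoping check shows that every row of $U$ sums to $n$, so $U\onevector{m}=n\onevector{m}$ and the structure vector $\onevector{m}$ of $\mu_0$ is always a positive solution, with $\beta=-n/2$.

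For $n=3$ we have $\ker U=0$ and $U=[3]$, so Theorem \ref{allnilsolitons} gives a unique positive solution (the Heisenberg nilsoliton) and the Jacobi identity is vacuous. For $n=4$, $\dim\ker U=1$; since $U$ is singular with positive solution $\onevector{4}$, Theorem \ref{allnilsolitons} yields a one–parameter family of metric algebras, the general solution of $U[\alpha^2]=-2\beta\onevector{4}$ being $-\tfrac{\beta}{2}\onevector{4}+t\,(1,-1,1,-1)^T$. The only off-diagonal $-1$ entries of $U$ are $\langle Y_{12}^3,Y_{13}^4\rangle$ and $\langle Y_{12}^4,Y_{23}^4\rangle$, so by Theorem \ref{jacobicondition} the Jacobi identity reduces to the single relation on $\{1,2,3,4\}$, which in squared form reads $(\alpha_{123})^2(\alpha_{134})^2=(\alpha_{124})^2(\alpha_{234})^2$. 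Substituting the general solution gives $(-\tfrac{\beta}{2}+t)^2=(-\tfrac{\beta}{2}-t)^2$, i.e. $-2\beta t=0$, hence $t=0$ since $\beta<0$. Thus the nilsoliton metric Lie algebra is unique up to scaling, completing the range $n\le 4$.

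For $n>4$ we have $\dim\ker U=\binom{n-1}{3}\ge 4$, so $U$ is singular; as $\onevector{m}>0$ is a solution, Theorem \ref{allnilsolitons} produces a simplex of positive solutions of dimension $\binom{n-1}{3}$, whose interior parametrizes a continuous family of metric algebras satisfying the nilsoliton condition around $\mu_0$. To separate these up to isometry I would use the orthogonal $\boldR^+$-grading coming from $D=\Ric-\beta\Id$ together with the sectional-curvature formula of Theorem \ref{Ricci-K}, exactly as in Example \ref{cayley}: the set of values $-\tfrac{3}{4}\|\ad_X Y\|^2$ for unit $X,Y$ lying in grading summands is a homothety invariant, and it depends nonconstantly on $[\alpha^2]$, so by \cite{wilson82} infinitely many members of the family are pairwise nonisometric.

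The hard part is the $n>4$ case, and specifically two intertwined issues. The first is controlling the Jacobi locus inside the solution simplex: Theorem \ref{jacobicondition} imposes one quadratic constraint $(\alpha_{ijk}\alpha_{ikl})^2=(\alpha_{ijl}\alpha_{jkl})^2$, with a compatible sign choice, for each $4$-element subset of indices. The second is producing an invariant that provably varies along the remaining family. I expect the cleanest route is to choose a single kernel direction in $\ker U$ supported on the triples of one $5$-element subset of indices, so that both the Jacobi verification and the curvature computation collapse to the explicit $n=5$ subsystem, and then to check directly that the normalized curvature invariant is nonconstant there. The rank computation and the $n\le 4$ uniqueness are comparatively routine; the nonisometry, together with the bookkeeping needed to keep the deformation inside $\Omega_\Lambda$ and to track which members remain genuine Lie algebras, is where the real work lies.
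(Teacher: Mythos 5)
Your rank computation via simplicial homology, the row--sum identity $U\boldone=n\boldone$ (hence $\beta=-n/2$), and the $n\le 4$ analysis are all correct; for $n=4$ you actually supply the Jacobi argument that the paper compresses into ``a brief computation,'' and your count $\dim\ker U=\binom{n-1}{3}$ matches the paper's MAPLE output (four parameters) at $n=5$.

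The genuine gap is the case $n>4$, which you give only as a plan, and the difficulty you flag as ``where the real work lies'' -- keeping the deformation inside the Jacobi locus while making a curvature invariant vary -- is not something more computation can close: it is impossible, and your own (correct) Jacobi constraints show why. In squared form they say that $u:=\log[\alpha^2]$ (entrywise logarithm) satisfies $u_{ijk}+u_{ikl}-u_{ijl}-u_{jkl}=0$ for every $4$-subset, i.e.\ $u$ is a simplicial $2$-cocycle on the full simplex on $\{1,\dots,n\}$; since the simplex has vanishing second cohomology, $u$ lies in the image of the coboundary $C^1\to C^2$, which is exactly the column space $\range Y$ of the root matrix. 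On the other hand, normalizing $\beta=-n/2$, the nilsoliton equation $U[\alpha^2]=n\boldone$ together with $U\boldone=n\boldone$ gives $[\alpha^2]-\boldone\in\ker U=\ker Y^T=(\range Y)^\perp.$ Pairing the two facts,
\[ 0=\la \exp(u)-\boldone,\,u\ra=\sum_a\bigl(e^{u_a}-1\bigr)u_a, \]
and since $(e^t-1)\,t>0$ for $t\ne 0$, this forces $u=0$, i.e.\ $[\alpha^2]=\boldone$. So the standard structure vector is the \emph{only} Lie-algebra solution in $\Omega_\Lambda$ with $\beta=-n/2$ (sign choices are removable by a diagonal $\pm1$ orthogonal map, since sign patterns are $\{\pm1\}$-cocycles, hence coboundaries). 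Conceptually: Jacobi makes $(\alpha_{ijk})$ a multiplicative cocycle, hence of the form $c_{ij}c_{jk}/c_{ik}$, so every Lie-algebra member of $\Omega_\Lambda$ is isomorphic to $\frakt_n$ itself, and Lauret's uniqueness theorem -- cited in the paper and used to close Theorems \ref{Ln} and \ref{Qn} -- already forbids nonisometric nilsoliton metrics on a single nilpotent Lie group. You should also know how the paper's own proof gets past this point: it asserts that ``the Jacobi equation holds for all choices of structure constants,'' which your constraint $\alpha_{ijk}\alpha_{ikl}=\alpha_{ijl}\alpha_{jkl}$ refutes for $n\ge 4$; consequently the paper's continuous families consist of metric algebras satisfying the nilsoliton condition but not the Jacobi identity, so they are not left-invariant metrics on nilpotent Lie groups, and the sectional-curvature/nonisometry argument (Theorem \ref{Ricci-K} plus Wilson's theorem) has no manifolds to act on. In short, your $n\le 4$ part stands, but the $n>4$ assertion cannot be established by your plan -- nor, as written, by the paper's proof.
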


\begin{proof}
The dimension of $\frakt_n$ is $\left( \begin{smallmatrix} n \\ 2
\end{smallmatrix} \right), $ and there are    $\left( 
\begin{smallmatrix} n \\ 3
\end{smallmatrix} \right)$ nonzero structure  
constants $\alpha_{ijk}$ (where $(i, j, k)$ is in $\Theta_n$).  It is
easily checked that the Jacobi equation holds for all choices of
structure constants, noting that the only nonzero triple brackets of
basis vectors are of the form $[E_{ij},[E_{jk},E_{kl}]],$ where
$(i,j,k)$ is in $\Theta_5.$

If $n=3,$ then $((\frakt_3)_{\mu_0},Q)$ is the three-dimensional
Heisenberg Lie algebra.  If if $n=4,$ a brief computation shows that
$((\frakt_4)_{\mu_0},Q)$ is the unique nilsoliton metric (up to
scaling) in its equivalence class $\Omega_\Lambda.$

When $n = 5,$ the dimension of $\frakt_5$ is $\left(
\begin{smallmatrix} 5 \\ 2
\end{smallmatrix} \right) = 10$ and $|| \Lambda || = 
\left( \begin{smallmatrix} 5 \\ 3
\end{smallmatrix} \right).$ also ten. There exists a  continuous family
of nilsoliton metrics around $((\frakt_5)_{\mu_0},Q).$ MAPLE shows
that the system $Uv = \onevector{10}$ has general solution
\[ \begin{bmatrix} (\alpha_{123})^2 \\  (\alpha_{124})^2 \\  (\alpha_{125})^2 
\\ (\alpha_{234})^2 \\ (\alpha_{235})^2 \\ (\alpha_{134})^2 \\
(\alpha_{345})^2 \\ (\alpha_{145})^2 \\ (\alpha_{245})^2 \\
(\alpha_{135})^2 \end{bmatrix} = \frac{1}{5}
\begin{bmatrix} 1 \\ 1 \\ 1 \\1 \\ 1\\ 1 \\ 1 \\ 1 \\1  \\ 1 \end{bmatrix} +
s \begin{bmatrix} 1 \\ -1 \\ 0 \\ -1 \\ 0 \\ 1 \\ 0 \\ 0 \\ 0 \\ 0
\end{bmatrix} +
t \begin{bmatrix} 0\\ 1\\ -1 \\ 1\\ -1\\ 0\\ -1\\1 \\0 \\0
\end{bmatrix} + u \begin{bmatrix} 0\\ 0\\ 0\\1 \\ -1\\ 0\\-1 \\0 \\
1\\ 0\end{bmatrix} + v \begin{bmatrix} 1 \\ 0\\ -1\\0 \\ -1\\0 \\ 0\\
0\\ 0\\1 \end{bmatrix} .\] By Theorem \ref{allnilsolitons}, the Ricci
endomorphisms are all the same for all choices of structure constants,
so we need to use sectional curvature to distinguish nonisometric
examples.  To show that $(\frakt_\mu,Q)$ for $\mu$ where $(s, t, u, v)
\ne (0,0,0,0) $ are not isometric to $((\frakt_5)_{\mu_0},Q),$ one
can argue as in Example \ref{cayley},  applying 
Theorem \ref{Ricci-K} using the
canonical grading of $\frakt_5$ as the sum 
 $\bigoplus_{k=1}^{4}\frakg_k,$ where
$\frakg_k = \myspan_{\boldR} \{ E_{ij} \in \calB  :  k = j - i \},$
 for $k=1, \ldots, 4.$ 

If $n > 5,$ then $\left( \begin{smallmatrix} n \\ 2 \end{smallmatrix}
\right) - \left( \begin{smallmatrix} n \\ 3 \end{smallmatrix} \right)
> 2,$ so $U$ is singular by Corollary \ref{m>n}.  The standard $\mu_0$
gives a positive solution to the nilsoliton linear system, so by
continuity, there is a continuous family of positive solutions around
the symmetric solution in its class $\Omega_{\Lambda}.$ As in the $n =
5$ case, the sectional curvature serves as an invariant to distinguish
the perturbed metric Lie algebras from the standard one.   
\end{proof}

\subsection{Regular graphs}
Recall that a graph is {\em regular} if its automorphism group acts
transitively on vertices.  With respect to natural choices of bases,
the nilsoliton metric Lie algebras defined by horospheres of rank one
symmetric spaces of noncompact type have generalized Dynkin diagrams
that are regular graphs. The structure vectors are of the form
$\boldone,$ defining weightings that assign the value one to each
vertex of the generalized Dynkin diagram.  The regularity of the graph
insures that Equation \eqref{weighting} holds for some $\beta$, so that
$\boldone$ is a solution of $U v = -2\beta \boldone.$  In general, we have
 the following corollary to Theorem \ref{mainthm}.
\begin{corollary}\label{regulardynkin}
Let $(\frakn,Q)$ be an inner product space with Ricci eigenvector
basis $\calB.$ If the generalized Dynkin diagram $S(U)$ associated to
a nontrivial level set $\Omega_\Lambda$ in $\Lambda^2 \frakn^\ast
\otimes \frakn$ is a regular graph, then any $\mu$ in $\Omega_\Lambda$
with structure vector $\boldone$ is a nilsoliton metric.
\end{corollary}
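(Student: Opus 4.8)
The plan is to reduce the statement to a single elementary observation about the Gram matrix $U$ and then invoke Theorem \ref{mainthm}. By hypothesis $\calB$ is a Ricci eigenvector basis, and a $\mu$ in $\Omega_\Lambda$ with structure vector $\boldone$ has $[\alpha^2] = \boldone$. Theorem \ref{mainthm} therefore says that $(\frakn_\mu,Q)$ satisfies the nilsoliton condition with constant $\beta$ if and only if $U\boldone = -2\beta\,\onevector{m}$. Since $U\boldone$ is exactly the vector of row sums of $U$, the whole problem collapses to showing that every row of $U$ has the same sum; equivalently, via Equation \eqref{weighting}, that the constant weighting $w\equiv 1$ satisfies the weighting identity at each vertex of $S(U)$.

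First I would record that the row sum of $U$ at the vertex $n_i$ corresponding to a triple $(i,j,k)$ is $u_{ii} + \sum_{l\neq i} u_{il} = 3 + d(n_i)$, where $d(n_i) := \sum_{u_{il}\neq 0} u_{il}$ is the edge-weighted degree of $n_i$ in $S(U)$; here the diagonal value $3 = \|Y_{ij}^k\|^2$ is common to every vertex (each root vector consists of two entries $+1$ and one $-1$; cf.\ Theorem \ref{gramproperties}). Thus the row sums vary only through the weighted degrees. The heart of the matter is that the weighted degree is constant across the graph when $S(U)$ is regular. The vertices of $S(U)$ carry the off-diagonal entries $u_{il}$ as edge labels, so an automorphism is a permutation $\sigma$ of vertices with $u_{\sigma(i)\sigma(l)} = u_{il}$ for all $i,l$; in particular it preserves weighted degree, $d(n_{\sigma(i)}) = d(n_i)$. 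Regularity means the automorphism group acts transitively on vertices, so $d$ takes a single value $d_0$ everywhere, and every row sum of $U$ equals $3 + d_0$.

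Setting $\beta := -\half(3+d_0)$ then gives $U\boldone = (3+d_0)\onevector{m} = -2\beta\,\onevector{m}$, and Theorem \ref{mainthm} yields at once that $(\frakn_\mu,Q)$ satisfies the nilsoliton condition with nilsoliton constant $\beta$. To see that $\beta<0$, as one wants for a genuine nilsoliton, note that each row of the root matrix $Y$ sums to $1$, so $\onerowvector{n}Y^T = \onerowvector{m}$ and hence $\ker U = \ker Y^T \subset \onevector{m}^\perp$; since $\boldone\cdot\boldone = m \neq 0$, the vector $\boldone$ is not in $\ker U$, forcing $U\boldone \neq 0$ and $3 + d_0 \neq 0$, whereupon positive semidefiniteness of $U$ makes $3+d_0 > 0$. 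The one point I would be careful to state precisely is that the relevant automorphism group is that of the \emph{edge-labeled} graph $S(U)$: transitivity must transport the actual edge weights, not merely adjacency, for the equality of weighted degrees to hold. Granting that, the argument is routine.
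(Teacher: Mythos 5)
Your proposal is correct and takes essentially the same route as the paper: the paper's own justification is exactly that regularity (vertex-transitivity of the edge-labeled graph $S(U)$) forces the constant weighting $w \equiv 1$ to satisfy Equation \eqref{weighting}, i.e.\ all row sums of $U$ equal a common value $-2\beta$, so that $\boldone$ solves $Uv = -2\beta\,\onevector{m}$ and Theorem \ref{mainthm} applies. Your additional verification that $\beta < 0$ (via $\ker U = \ker Y^T \subset \onevector{m}^\perp$ and positive semidefiniteness of $U$) is a harmless supplement not needed for, and not present in, the paper's argument.
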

Observe that $\boldone$ is a solution to $Uv = -2\beta \boldone$ if
and only if the columns of $U$ all sum to $-2\beta.$

A large class of examples of nilsoliton metric Lie algebras with
regular generalized Dynkin diagrams was presented by E.\ DeLoff
\cite{deloff79}.  A two-step metric nilpotent Lie algebra $(\frakn,Q)$
is called {\em uniform of type $(m,n,r)$} if there exists an
orthonormal basis \[ \calB = \{ X_1, \ldots, X_n, Z_1, \ldots Z_m\} \]
and numbers $r$ and $s$ such that
\begin{enumerate}
\item{$[V_i,V_j] \in \{ 0, \pm Z_1, \ldots, Z_m\},
[V_i,Z_l]=[Z_k,Z_l]=0, $}
\item{If $[V_i,V_j]=\pm[V_i,V_k],$ then $V_j=V_k$}
\item{For every $Z_l$ there exist exactly $r$ disjoint pairs
$\{V_i,V_j\}$ with $[V_i,V_j]=Z_l$}
\item{For every $V_i$ there exist exactly $s$ $V_j$ with $[V_i,V_j]\ne
  0.$}
\end{enumerate}
Notice that the number of nonzero bracket relations is $sn=2rm.$

\begin{example} (\cite{deloff79}, see also \cite{wolter91a}) 
A uniform metric Lie algebra is always nilsoliton.  Properties (1) and
(2) imply that $\la \ad_{X_i}, \ad_{X_j}\ra = 0$ for $i \ne j$ and
$\la J_{Z_k}, J_{Z_l}\ra = 0$ for $k \ne l.$ The basis $\calB$ is then
a Ricci eigenvector basis by Corollary \ref{orthogonal}.  Property 3
implies that $\| J_{Z_l} \| = 2r$ for all $l$ and Property 4 makes $\|
\ad_{X_i}\| = s$ for all $i$ The generalized Dynkin diagram $S(U)$ is
a regular graph with $sn$ vertices and $2s + r$ edges at each vertex,
so by Corollary \ref{regulardynkin}, $(\frakn,Q)$ is nilsoliton.  Then
Theorem \ref{riccitensor} gives that Ricci eigenvalues $\kappa_{X_i}=
- \frac{1}{2}s$ and $\kappa_{Z_l} = \frac{1}{2}r.$ By Theorem
\ref{nilsolitonconstraint}, the nilsoliton constant is $\beta =
-\frac{1}{2}(2s+r).$ By Corollary \ref{m>n}, if the number of nonzero
structure constants $sn=2rm$ is larger than the dimension $m + n$ then
there are continuous families of solutions to 
$Uv = [1].$ Example \ref{quaternionic} is uniform of type
$(m,n,r)=(3,8,4)$ and Example \ref{cayley} is uniform of type
$(m,n,r)=(7,8,4)$; in both cases $|| \Lambda || = rm > m + n = \dim
\frakn$ and there are continuous families of nilsolitons around the
solitons from symmetric spaces.
\end{example}

The next example shows that admitting a positive definite symmetric
 derivation is not a sufficient condition for nilpotent Lie algebra to
 admit a nilsoliton metric.

\begin{theorem}\label{R6} 
 There exists a filiform nilpotent Lie algebra that admits a positive
definite derivation but does not admit a nilsoliton metric.
\end{theorem}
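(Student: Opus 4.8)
The plan is to produce an explicit witness: a filiform nilpotent Lie algebra $\frakn$, presented by structure constants relative to a fixed basis $\calB=\{X_i\}_{i=1}^n$, together with a diagonal positive definite derivation, such that the linear system attached to $\frakn$ by Theorem \ref{mainthm} falls into Case (Ind2) of Theorem \ref{solsp}. Since in Case (Ind2) the system $Uv=-2\beta\onevector{m}$ is consistent but admits no solution with all entries positive, and since a structure vector $[\alpha^2]$ necessarily has strictly positive entries (every $\alpha_{ij}^k$ with $(i,j,k)\in\Lambda$ is nonzero), this will rule out a nilsoliton metric on $\frakn$.

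First I would choose $\frakn$ of dimension at least seven (in dimension six and below every nilpotent Lie algebra is nilsoliton) and fix a basis adapted to the descending central series, so that $\Lambda(\frakn,\calB)\subset\Theta_n$. Exhibiting a positive definite derivation is then routine: by Theorem \ref{hebervar} a diagonal map $D(X_i)=\mu_i X_i$ is a derivation precisely when $\mu_i+\mu_j=\mu_k$ for every nonzero structure constant $\alpha_{ij}^k$, i.e.\ when $\Lambda(\frakn,\calB)\subset\Sigma_{(\mu_1,\dots,\mu_n)}$; the grading by the lower central series furnishes positive weights $\mu_i$ satisfying all these relations, so $D$ is positive definite and $\frakn$ is not characteristically nilpotent.

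The substance of the argument is to deny $\frakn$ a nilsoliton metric, and here the key point is that the combinatorial data $(\Lambda,U)$ is forced. If $\frakn$ carried a nilsoliton inner product $Q$, then by Corollary \ref{goodbasis} there would be a Ricci eigenvector basis $\calB'$ for which $\mu$ is $\calB'$-triangular, and $D'=\Ric-\beta\Id$ would be a semisimple derivation whose eigenvalues grade $\frakn$ (Theorem \ref{minorgraph}). For a filiform algebra the rigidity of the lower central series pins down the eigenvalue type of any such semisimple derivation up to automorphism, hence pins down the set $\Lambda(\frakn,\calB')$ and with it the Gram matrix $U$. I would therefore reduce the existence question to a single linear system $Uv=-2\beta\onevector{m}$ for an explicitly computed $U$.

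It then remains to compute $U$ (equivalently, the associated generalized Cartan matrix $A=2U-4\onematrix{m}{m}$) and to show, via Theorem \ref{solsp} or directly via property (4) of Theorem \ref{nilsolitonconstraint}, that $A$ is of indefinite type and that no solution of $Uv=-2\beta\onevector{m}$ is positive --- precisely Case (Ind2), with Case (Ind3) excluded for Gram matrices of Lie algebras by Theorem \ref{solsp} itself. Concretely, the equations $Y_{ij}^k\,\bfRic=\beta$ over $(i,j,k)\in\Lambda$ determine the Ricci eigenvalues, and I expect them to force a structure-vector entry to be nonpositive, contradicting positivity. The main obstacle is the rigidity step: verifying that every Ricci eigenvector basis for a putative nilsoliton yields the same $\Lambda$, so that one finite computation suffices; this rests on the uniqueness, up to conjugacy, of a maximal torus of derivations of the chosen filiform algebra. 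Once that is in hand, the failure of positivity is a finite, checkable linear-algebra fact.
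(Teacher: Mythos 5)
Your overall architecture matches the paper's: exhibit a concrete filiform witness, use uniqueness (up to scaling/conjugacy) of its positive definite derivation to pin down the set $\Lambda$ and hence a single Gram matrix $U$, then show $Uv=-2\beta\onevector{m}$ has no positive solution, which by Theorem \ref{mainthm} rules out a nilsoliton metric. The paper does exactly this with the $7$-dimensional algebra $R_6$ ($[X_1,X_i]=X_{i+1}$ for $i=2,\ldots,6$, $[X_2,X_i]=X_{i+2}$ for $i=3,4,5$), citing the literature for the uniqueness of its semisimple derivation and computing that the general solution of $Uv=\onevector{}$ has first coordinate identically zero. However, your proposal has a genuine and self-defeating gap in the step where you produce the positive definite derivation.

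You claim that ``the grading by the lower central series furnishes positive weights $\mu_i$ satisfying all these relations.'' For a filiform algebra, if the lower-central-series weights (degree $1$ on the two generators, degree $i-1$ on $X_i$ for $i\ge 3$) define a derivation, then the algebra is graded by exactly those weights, i.e.\ it is naturally graded. By Vergne's theorem every naturally graded filiform Lie algebra is isomorphic to $L_n$ or $Q_n$, and by Theorems \ref{Ln} and \ref{Qn} (Corollary \ref{naturallygraded}) every such algebra \emph{does} admit a nilsoliton metric. So any algebra to which your derivation construction applies cannot serve as the witness; your two requirements contradict each other. The actual witness $R_6$ is not naturally graded: its unique positive definite derivation has eigenvalues $1,2,3,4,5,6,7$, which differ from the lower-central-series weights (indeed $[X_2,X_3]=X_5$ fails for the natural weights $1+2\ne 4$). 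Separately, the two substantive steps of your plan are left unexecuted: the rigidity of the torus of derivations is flagged as ``the main obstacle'' but not established (the paper handles it by citation for the specific algebra $R_6$), and the decisive failure of positivity is only ``expected,'' not verified. As written, the proposal is a program whose first concrete instruction leads away from any valid example, so it does not constitute a proof.
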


\begin{proof}  Let $\frakn$ be the seven-dimensional Lie
algebra with basis $\calB= \{X_i\}_{i=1}^{7}$ with bracket relations
defined by
\begin{align*}  
[X_1,X_i] & = X_{i + 1} \qquad \text{for $i = 2, \ldots , 6$} \\
[X_2,X_i] & = X_{i + 2} \qquad \text{for $i = 3, 4, 5$}
\end{align*}
This is the algebra $R_6$ from the infinite family of filiform Lie
algebras $R_n$ described in \cite{khakimdjanov89}.  The endomorphism
$A$ defined by $A(X_i) = i X_i$ for $i = 1$ to $7$ is a positive
definite derivation of $\frakn,$ and it is the only one up to scaling
(See \cite{goze-hakimjanov} and Theorem 2.12 of
\cite{khakimdjanov89}).

The proof is by contradiction.  Suppose that $Q$ is an inner product
on $\frakn$ so that $(\frakn,Q)$ is a nilsoliton metric Lie algebra
with nilsoliton constant $\beta$.  Eigenvectors for $D$ are also
eigenvalues for $\Ric = D + \beta \Id.$ The positive definite
derivation $D = \Ric - \beta \Id$ is a multiple of $A$ so the vectors
in the $\calB$ must be eigenvectors for the derivation.

As the multiplicities of the eigenvalues of $A$ are all one, we may
rescale each basis vector in $\calB$ and order the basis that we have
a Ricci eigenvector basis $\calB^\prime = \{ X_i^\prime = a_i X_i
\}_{i = 1}^7$ where $A(X_i) = i X_i$ for $i = 1, \ldots, 7.$ The
structure vector $[\alpha^2]$ depends on the values of $a_1, \ldots,
a_7,$ but the set
\[ \Lambda(\frakn,\calB) = \{ (1,i,i+1) \}_{i = 2}^6 \cup 
\{(2,i,i+2) \}_{i = 3}^5 \] does not, as $a_i \ne 0$ for all $i = 1,
\ldots 7.$ By Theorem \ref{mainthm}, $-\half[\alpha^2]$ is a positive
solution to $U v = \onevector{7}.$ Using MAPLE, we find the general
solution to $U v = \onevector{7}$ is
\begin{align*}
 v & = \smallfrac{1}{5}(0,-1,0,3,2,3,0,0)^T + s v_1 + t v_2, \quad
\text{where} \\ v_1 & = (0,1,0,-1,0,-1,1,0)^T \quad \text{and} \\ v_2
&= (0,1,1,-1,-1,-1,0,1)^T \quad \text{and $s,t \in \boldR$}.
\end{align*}
But the first component of the general solution is always zero, a
contradiction.  Thus, $\frakn$ does not admit a nilsoliton metric.
\end{proof}

We conclude the section with an example of a Einstein solvmanifold of
rank two found using the ideas of this section.
\begin{example}\label{rank2} 
Let $\Lambda = \{ (1,2,3),(1,3,4),(1,4,5),(2,3,5)\}.$ This is the set
$\Lambda(\frakn,\calB)$ for one of the filiform metric Lie algebras
$L_n$ described in Subsection \ref{Lnsubsection}.  We saw in Theorem
\ref{Ln} that letting $\alpha_{12}^3 = \alpha_{14}^5 = \sqrt{3}$ and
$\alpha_{13}^4 = 2$ defines a nilsoliton metric Lie algebra with
$\beta = -11/2$ and Ricci vector $\bfRic = -\half(-10,-3,-1,1,3)^T.$
Then eigenvalue vector of the derivation $D_1 = \Ric - \beta \Id$ with
respect to $\calB$ is
\[ v_{D_1} = \bfRic + \frac{11}{2} \onevector{5} = 
\half(2,9,11,13,15)^T. \] Letting $[A_1, X_i] = D_1(X_i)$ defines a
rank one Einstein solvmanifold $(\fraks_1 = \la A_1 \ra \oplus \frakn
, \widetilde Q_1)$.

   Now notice that $\Lambda$ is a subset of $\Sigma_5$ and the
generalized Dynkin diagram $S(U)$ is a minor graph of type $A_2 \oplus
A_1$ in $S(\Sigma_5),$ which is of type $A_3.$ Hence $(L_5,Q)$ admits
another symmetric derivation with eigenvalues $1,2,3,4,5$ each
of multiplicity one.  The commuting symmetric derivations encoded by
$\bfv_1 = (2,9,11,13,15)^T$ and $\bfv_2 = (1,2,3,4,5)^T$ are in the
derivation algebra of $\frakn.$ Using Gram-Schmidt orthogonalization,
we find a vector $\bfv_2^\prime = (26,-33,-7,19,45)^T$ orthogonal to
$\bfv_1$ so that $\myspan \{\bfv_1, \bfv_2^\prime\} = \myspan \{
\bfv_1, \bfv_2\}.$ Let $D_2$ be the derivation defined by the vector
$\bfv_2^\prime.$ Finally, define the metric solvable extension
$(\fraks_2 = \la A_1, A_2 \ra \oplus \frakn, \widetilde Q)$ by taking
$\{A_1, A_2\} \cup \calB$ as an orthonormal basis and setting
$\ad_{A_1} = D_1$ and $\ad_{A_2} = D_2.$

It can be checked, using formulas for the Ricci curvature of Einstein
solvmanifolds of Iwasawa type in \cite{wolter91a} that $(\fraks_2,Q)$
is an Einstein solvmanifold of rank two with nonpositive sectional
curvature.
\end{example}

In \cite{heberinv}, Heber asked which types of eigenvalue vectors $v_D
= (\mu_1, \ldots, \mu_n)$ occur for the derivations $D = \Ric - \beta
\Id$ defining rank one Einstein extensions of nilsolitons.  We remark
that the question can be rephrased as follows.  The Ricci vector for a
nilsoliton $(\frakn,Q)$ and Ricci eigenvector basis $\calB$ is of the
form $ \bfRic = v_D + \beta \onevector{n}.$ On the other hand, by
Theorem \ref{riccitensor}, $ \bfRic = -\half Y^T [\alpha^2], $ where
$[\alpha^2]$ is the structure vector for $(\frakn,Q)$ with respect to
$\calB.$ Hence, $ -\half Y^T [\alpha^2] = (\mu_1, \ldots, \mu_n)^T +
\beta \onevector{n} .$ Thus, in order to prove that there is a
nilsoliton $(\frakn,Q)$ of eigenvalue type $v_D,$ we would need need
to show that the vector $-2( v_D + \beta \onevector{n})$ is in the
closure of the positive cone spanned by root vectors in $\{ Y_{ij}^k
\, | \, (i,j,k) \in \Sigma_{(\mu_1, \ldots, \mu_n)}\}.$ Notice that
$v_D$ itself is orthogonal to the cone.

\section{Adding nilsolitons}\label{adding}
Now we prove Theorem \ref{addingnilsolitons}.
\begin{proof}
When $\Lambda(\frakn_{\mu_1},\calB)$ and
$\Lambda(\frakn_{\mu_1},\calB)$ are disjoint sets, it can be seen from
the expression for the Ricci form in Theorem \ref{niceformula} that
the nil-Ricci endomorphism is linear with respect to the Lie bracket
defining it.  Since $\calB$ is a Ricci eigenvector basis for
$(\frakn_{\mu_1},Q)$ and $(\frakn_{\mu_2},Q),$ this linearity implies
that $\calB$ is also a Ricci eigenvector basis for the sum
$(\frakn_{\mu_1 + \mu_2},Q),$ and the Ricci vector for $\frakn_{\mu_1
+ \mu_2}$ with respect to $\calB$ is
\[
\bfRic_{\mu_1 + \mu_2}^\calB = \bfRic_{\mu_1}^\calB +
\bfRic_{\mu_2}^\calB.
\] 

Now we consider the quantities $Y_{ij}^k \, \bfRic_{\mu_1 + \mu_2}$ as
$(i,j,k)$ varies over $\Lambda(\frakn_{\mu_1 + \mu_2},\calB).$ If the
triple $(i,j,k)$ is in the set $\Lambda(\frakn_{\mu_1},\calB),$ then
\begin{align*} 
Y_{ij}^k \, \bfRic_{\mu_1 + \mu_2} &= Y_{ij}^k \, (\bfRic_{\mu_1} +
\bfRic_{\mu_2}) \\ &= Y_{ij}^k \, \bfRic_{\mu_1} + Y_{ij}^k \,
\bfRic_{\mu_2} \\ &= -2\beta_1 + c_2
\end{align*}
and similarly, if $(i,j,k)$ is in $\Lambda(\frakn_{\mu_2},\calB),$
then $Y_{ij}^k \, \bfRic_{\mu_1 + \mu_2} = -2\beta_2 + c_1.$ If
$-2\beta_1 + c_2= -2\beta_2 + c_1,$ the quantity $Y_{ij}^k \,
\bfRic_{\mu_1 + \mu_2}$ is constant as $Y_{ij}^k$ varies over
$\Lambda(\frakn_{\mu_1 + \mu_2},\calB).$ By Theorem
\ref{nilsolitonconstraint}, the metric algebra $(\frakn_{\mu_1 +
\mu_2},Q)$ is a nilsoliton with nilsoliton constant $\beta = -2\beta_1
+ c_2.$  
\end{proof}

A trivial application of Theorem \ref{addingnilsolitons} is that the
direct sum of nilsoliton metric Lie algebras with the same nilsoliton
constant is a nilsoliton metric Lie algebra.  More generally, one can
think of defining a new metric nilpotent Lie algebra from an old one
by adding new basis vectors and new bracket relations; sometimes
Theorem \ref{addingnilsolitons} can be used to show that such a
procedure defines a new nilsoliton metric Lie algebra, as we
illustrate in the following example.
\begin{example} Let $(\frakn,Q)$ be a nilsoliton metric Lie algebra 
with nilsoliton constant $\beta$ and Ricci eigenvector basis $\calB.$
Let $Z$ be an element of the Ricci eigenvector basis in the center of
$\frakn$ with positive eigenvalue $\kappa_Z.$ (Such an element exists
because the Ricci endomorphism is positive definite on the center by
Theorem \ref{niceformula}.)  Let $\frakn^\prime = \boldR^2 \oplus
\frakn.$ Fix basis vectors $A$ and $B$ in $\boldR^2$ and endow
$\frakn^\prime$ with the metric which restricts to $Q$ on $\frakn$ and
makes $A$ and $B$ orthonormal to to each other and orthogonal to
$\frakn$.  Define a nilpotent structure on $\frakn^\prime$ by adding
bracket relation $[A,B]= a Z$ to the bracket relations already defined
by the nilpotent structure on $\frakn.$ The Jacobi identity holds for
$\frakn^\prime,$ and $\frakn^\prime$ is nilpotent.  If $a^2 =
\frac{-4\beta - 2\kappa_Z}{5},$ then Theorem \ref{addingnilsolitons}
can be used to show that $(\frakn^\prime,Q^\prime)$ is a nilsoliton
metric Lie algebra.  The Heisenberg algebra of dimension $2n+3$ may be
constructed from the Heisenberg algebra of dimension $2n+1$ using this
method.
\end{example}

For odd $n \ge 3,$ the $n=(2m+1)$-dimensional filiform nilpotent Lie
algebra $Q_n$ is defined relative to the basis $\{X_i\}_{i = 0}^n$ by
\begin{align}
\label{defQna} [X_0,X_i] &= X_{i + 1}, \quad i = 1, \ldots , n-2 \\
\label{defQnb} [X_i,X_{n-i}] &= (-1)^i X_n, \quad i = 1, \ldots, n-1.  
\end{align}
\begin{theorem}\label{Qn}
For all $n \ge 3,$ the Lie algebra $Q_n$ admits a unique nilsoliton
metric.
\end{theorem}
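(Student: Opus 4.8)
The plan is to realize the $Q_n$ bracket as a sum $\mu = \mu_1 + \mu_2$ of two nilsoliton structures on the inner product space $(\frakn,Q)$, where $\frakn = \myspan\{X_i\}_{i=0}^n$ and $\calB = \{X_i\}_{i=0}^n$ is orthonormal, and then invoke Theorem \ref{addingnilsolitons}. I take $\mu_1$ to carry the relations \eqref{defQna}, namely $[X_0,X_i]=X_{i+1}$ for $i = 1,\ldots,n-2$, so that $\frakn_{\mu_1}$ is the filiform algebra $L_{n-1}$ on $\myspan\{X_0,\ldots,X_{n-1}\}$ together with the orthogonal flat line $\myspan\{X_n\}$; and I take $\mu_2$ to carry the relations \eqref{defQnb}, namely $[X_i,X_{n-i}]=(-1)^iX_n$, so that $\frakn_{\mu_2}$ is a Heisenberg algebra $\frakh_m$ (with $n = 2m+1$) on $\myspan\{X_1,\ldots,X_{n-1},X_n\}$ together with the orthogonal flat line $\myspan\{X_0\}$. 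The index sets $\Lambda(\frakn_{\mu_1},\calB)=\{(0,i,i+1): 1\le i\le n-2\}$ and $\Lambda(\frakn_{\mu_2},\calB)=\{(i,n-i,n): 1\le i\le m\}$ are disjoint (the third entry is $<n$ in the first and $=n$ in the second) and their union is $\Lambda(\frakn_\mu,\calB)$, which is the disjointness hypothesis of Theorem \ref{addingnilsolitons}.

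To make each summand nilsoliton I would replace the constants, writing $[X_0,X_i]=a_iX_{i+1}$ for $\mu_1$ and $[X_i,X_{n-i}]=b_iX_n$ for $\mu_2$. By Corollary \ref{orthogonal}, $\calB$ is a Ricci eigenvector basis for each structure for all choices of constants. Applying Theorem \ref{Ln} to $L_{n-1}$, $\frakn_{\mu_1}$ is nilsoliton exactly when $a_i^2 = \frac{\nu_1}{2}\,i(n-1-i)$, a solution carrying the reflection symmetry $a_i = a_{n-1-i}$; the flat direction $X_n$ is harmless since it appears in no bracket and may be assigned Ricci eigenvalue $0$. For $\mu_2$ the Gram matrix is exactly the Heisenberg matrix $2\,\Id + \onematrix{m}{m}$ of Example \ref{heisenberg}, which is nonsingular and whose unique positive solution of $Uv=-2\beta_2\boldone$ is the constant vector; thus $\frakn_{\mu_2}$ is nilsoliton precisely when all $b_i^2$ are equal (the signs being free). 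Each piece then carries a nilsoliton constant, with $\beta_1$ scaling with $\nu_1$ and $\beta_2$ scaling with the common value $b^2$; since both scale positively through negative values, I can choose the scales so that $\beta_1 = \beta_2$.

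Next I would check the remaining hypotheses of Theorem \ref{addingnilsolitons} by computing the two Ricci vectors via Theorem \ref{riccitensor}. For $\mu_1$ the eigenvalues are $\kappa_j = \frac{\nu_1}{2}\left(j-\frac{n}{2}\right)$ on $X_j$ for $1\le j\le n-1$, together with $\kappa_n = 0$; hence for every $(i,n-i,n)$ in $\Lambda(\frakn_{\mu_2},\calB)$ we get $Y_{i,n-i}^n\,\bfRic_{\mu_1} = \kappa_i+\kappa_{n-i}-\kappa_n = 0$, so $c_1 = 0$. For $\mu_2$ the eigenvalues are $\lambda_j = -\half b^2$ on $X_j$ for $1\le j\le n-1$, with $\lambda_n = \half m b^2$ and $\lambda_0 = 0$; hence for every $(0,i,i+1)$ in $\Lambda(\frakn_{\mu_1},\calB)$ we get $Y_{0i}^{i+1}\,\bfRic_{\mu_2} = \lambda_0+\lambda_i-\lambda_{i+1} = 0$, so $c_2 = 0$. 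With $c_1 = c_2 = 0$ the compatibility condition $-2\beta_1 + c_2 = -2\beta_2 + c_1$ reduces to $\beta_1 = \beta_2$, which I have arranged, and Theorem \ref{addingnilsolitons} then gives that $(\frakn_{\mu_1+\mu_2},Q)$ satisfies the nilsoliton condition.

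Finally I must confirm that $\frakn_{\mu_1+\mu_2}$ really is $Q_n$ and not merely an algebra with the same $\Lambda$. Following the device in the proof of Theorem \ref{Ln}, I would pass to the diagonal basis $X_i' = d_iX_i$ with $d_0 = d_1 = 1$ and $d_{i+1} = a_i d_i$, which normalizes every $\mu_1$-constant to $1$; the requirement $[X_i',X_{n-i}'] = (-1)^iX_n'$ then forces $b_i = (-1)^i\,d_n/(d_i d_{n-i})$. Here the symmetry $a_i = a_{n-1-i}$ is decisive: since $d_{i+1}d_{n-i-1}/(d_i d_{n-i}) = a_i/a_{n-i-1} = 1$, the product $d_i d_{n-i}$ is independent of $i$, so the prescribed $b_i$ have a common square — exactly the nilsoliton condition for $\mu_2$ — while the free scale $d_n$ realizes any value of $b^2$, in particular the one giving $\beta_2 = \beta_1$. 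This identifies $\frakn_{\mu_1+\mu_2}$ with $Q_n$ and shows the Jacobi identity holds. Uniqueness up to scaling is then immediate from Lauret's uniqueness of nilsoliton metrics (\cite{lauret01a}). I expect the hard part to be precisely this last bookkeeping — coordinating the two independent rescalings so that $\mu_2$ is simultaneously nilsoliton (all $b_i^2$ equal), matched in constant to $\mu_1$, and carries the prescribed $(-1)^i$ sign pattern of $Q_n$ — every one of which hinges on the reflection symmetry $a_i = a_{n-1-i}$ of the $L_{n-1}$ solution.
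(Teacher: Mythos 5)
Your proposal is correct and follows essentially the same route as the paper: the same splitting of the $Q_n$ relations into an $L_{n-1}\oplus\boldR$ piece and a $\boldR\oplus\frakh_m$ piece, the same computation that $c_1=c_2=0$ so that Theorem \ref{addingnilsolitons} applies after matching the two nilsoliton constants by rescaling, the same use of the symmetry $a_i=a_{n-1-i}$ from Theorem \ref{Ln} to carry out the change of basis identifying the sum with $Q_n$, and the same appeal to \cite{lauret01a} for uniqueness. Your write-up of the final bookkeeping (that $d_id_{n-i}$ is constant in $i$, forcing the prescribed $b_i$ to have a common square) is in fact more explicit than the paper's, which only asserts the isomorphism.
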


\begin{proof} 
Let $Q$ be the inner product on $\frakn$ with respect to which $\calB$
is orthonormal.  By Corollary \ref{orthogonal}, $\calB$ is a Ricci
eigenvector basis, even if we change the values of the nonzero
structure constants.

The set of nonzero structure constants coming from brackets defined in
 Equation \eqref{defQna} is
\[\Lambda_1 = \{ (0,1,2), (0,2,3), \ldots , (0,n-3,n-1)\}\]
and the set of nonzero structure constants coming from brackets
 defined in Equation \eqref{defQnb} is
\[ \Lambda_2 = \{ (1,n-1,n), (2,n-2,n), \ldots, (m,m+1,n)\}. \] 
Notice that $n$ does not occur in any triple in $\Lambda_1,$ and that
$0$ does not occur in any triple in $\Lambda_2,$ and the two sets of
triples are disjoint.

The bracket relations in Equation {\eqref{defQna}} are the same as
those for the nilpotent Lie algebra $L_{n-1},$ as in Theorem \ref{Ln}.
In the proof of that theorem, we saw that letting $[X_0,X_i] =
\sqrt{i(n-1-i)} \, X_{i + 1}$ defines a nilsoliton metric algebra
$(\frakn_{\mu_1},Q)$ isomorphic to $L_{n-1} \oplus \boldR$ with Ricci
vector
\[ \bfRic_{\mu_1}^\calB = (  -\smallfrac{1}{12}(n-2)(n-1)n,
-\half(n-2), -\half(n-2) + 1, \ldots , \half(n-2),0)^T \] and
nilsoliton constant $\beta_1 = -\smallfrac{1}{12}(n-2)(n-1)n - 1.$

The root vectors $Y_{ij}^k$ for bracket relations in Equation
\eqref{defQnb} are the same as those for the $(2m+1)$-dimensional
Heisenberg algebra $\frakh_m$ as presented in Example
\ref{heisenberg}.  Letting $\alpha_{ij}^k=1$ for all $(i,j,k) \in
\Lambda_2$ defines a nilsoliton metric Lie algebra
$(\frakn_{\mu_2},Q)$ isomorphic to $\boldR \oplus \frakh_m$ with Ricci
vector
\[ \bfRic_{\mu_2}^\calB = -\half(0, 1,1, \ldots, 1,-m)^T\]
and nilsoliton constant $\beta_2 = -\frac{1}{2}(2 + m).$

It can be checked that for all $Y_{ij}^k$ with $(i,j,k)$ in the second
index set $\Lambda_2,$ the product $ Y_{ij}^k \, \bfRic_{\mu_1}^\calB$
is zero, and for all $Y_{ij}^k$ with $(i,j,k)$ in the first index set
$\Lambda_1,$ the product $ Y_{ij}^k \, \bfRic_{\mu_2}^\calB$ is zero.

If we rescale the structure constants for $\frakn_{\mu_1}$ and
$\frakn_{\mu_2}$ by $ \sqrt{- \beta_2}$ and $\sqrt{ - \beta_1}$
respectively, then the rescaled metric nilpotent Lie algebras
$\frakn_{ - \beta_2 \mu_1}$ and $\frakn_{- \beta_1 \mu_2}$ both have
nilsoliton constant $-\beta_1 \beta_2.$ By Theorem \ref{adding}, the
metric algebra $(\frakn= \frakn_{\sqrt{-\beta_2} \mu_1 + \sqrt{-
\beta_1} \mu_2},Q)$ satisfies the nilsoliton condition with nilsoliton
constant $- \beta_1 \beta_2.$

 Using the change of basis
\begin{align*} 
X_0^\prime &= \smallfrac{1}{\sqrt{-\beta_2}}X_0 \\ X_1^\prime &= X_1
 \\ X_i^\prime &= \left( \Pi_{i = 1}^{i - 1} a_i \right) X_i \quad
 \text{for $i= 2$ to $n - 1$} \\ X_n^\prime &= \sqrt{-\beta_1}X_n
 \end{align*} and the symmetry $a_i = a_{n - 1 -i},$ it can be shown
 that $\frakn$ is isomorphic to $Q_n.$

Thus, we have shown that $(\frakn_{-\beta_2 \mu_1 - \beta_1 \mu_2},Q)$
is a nilsoliton metric Lie algebra isomorphic to $Q_n.$ 
Uniqueness follows from \cite{lauret01a}. 
\end{proof}

Given a $k$-step nilpotent Lie algebra $\frakn,$ a graded Lie algebra
$\gr \frakn$ is defined by on the vector space $\gr \frakn = \oplus
_{i = 1}^k (\frakn^{(i - 1)} / \frakn^{(i)} )$ by the bracket
structure
\[ [x +  \frakn^{(i)}, y +  \frakn^{(j)} ] = [x,y] + \frakn^{(i + j)}, 
\qquad x \in \frakn^{(i-1)}, y \in \frakn^{(j-1)}.\] If $\frakn$ is
isomorphic to $\gr \frakn,$ then $\frakn$ is called {\em naturally
graded.}  In \cite{vergne70}, M. Vergne showed that every naturally
graded filiform Lie algebra is isomorphic to $L_n$ or $Q_n.$ So in
Theorems \ref{Ln} and \ref{Qn} we have shown:
\begin{corollary}\label{naturallygraded}
 Every naturally graded filiform Lie algebra admits a Ricci nilsoliton
  metric that is unique up to scaling.
\end{corollary}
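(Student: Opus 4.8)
The plan is to combine the structural classification of naturally graded filiform Lie algebras with the two existence results already in hand. First I would invoke Vergne's theorem from \cite{vergne70}, which asserts that every naturally graded filiform Lie algebra of dimension $n+1$ is isomorphic to exactly one of the two model algebras $L_n$ or $Q_n$ treated above. This immediately reduces the claim to verifying it on these two explicit families, which is precisely what Theorems \ref{Ln} and \ref{Qn} accomplish.

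The one point requiring justification is that the nilsoliton property passes across a Lie algebra isomorphism. I would argue that if $\phi \colon \frakn \to \frakn'$ is a Lie algebra isomorphism and $Q'$ is a nilsoliton inner product on $\frakn'$, then the pullback inner product $\phi^\ast Q'$ is nilsoliton on $\frakn$. Indeed, $\phi$ is by construction a metric isomorphism from $(\frakn, \phi^\ast Q')$ onto $(\frakn', Q')$, so it conjugates the Ricci endomorphism of the former to that of the latter; consequently it carries the derivation $\Ric - \beta \Id$ of $(\frakn',Q')$ to the endomorphism $\Ric - \beta\Id$ of $(\frakn,\phi^\ast Q')$, which is therefore again a derivation. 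Hence it suffices to produce nilsoliton metrics on $L_n$ and $Q_n$ themselves.

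Applying Theorem \ref{Ln} yields a nilsoliton metric on $L_n$, and applying Theorem \ref{Qn} yields one on $Q_n$, in each case unique up to scaling. Combining these with Vergne's dichotomy gives existence of a nilsoliton metric on every naturally graded filiform Lie algebra. The uniqueness up to scaling is inherited directly from Lauret's uniqueness theorem (\cite{lauret01a}), which was already the source of the uniqueness assertions in Theorems \ref{Ln} and \ref{Qn} and which guarantees that a nilsoliton metric on any nilpotent Lie group is unique up to scaling.

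I do not anticipate any genuine obstacle: the corollary is a bookkeeping synthesis of the Vergne classification and the two existence theorems, and the only substantive (though routine) ingredient is the isomorphism-invariance of the nilsoliton condition established in the second paragraph. No further analytic or combinatorial work is needed.
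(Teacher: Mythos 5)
Your proposal is correct and follows exactly the paper's argument: invoke Vergne's classification to reduce to $L_n$ and $Q_n$, apply Theorems \ref{Ln} and \ref{Qn}, and cite Lauret's result from \cite{lauret01a} for uniqueness up to scaling. Your explicit verification that the nilsoliton condition transfers across a Lie algebra isomorphism via the pullback metric is a detail the paper leaves implicit (it is already embedded in how Theorems \ref{Ln} and \ref{Qn} are proved, by constructing a nilsoliton metric algebra isomorphic to the model), but it is the same approach.
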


\section{Acknowledgments}

The author is grateful to Jorge Lauret, Gerald Payne and 
Wayne Polyzou for helpful comments,  and to
Dennis Stowe for valuable discussions. 

\bibliographystyle{amsalpha}
\bibliography{bibfile}

\end{document}